\definecolor{uuuuuu}{rgb}{0.26666666666666666,0.26666666666666666,0.26666666666666666}
\definecolor{xdxdff}{rgb}{0.49019607843137253,0.49019607843137253,1.}
\definecolor{ffqqqq}{rgb}{1.,0.,0.}
\definecolor{uuuuuu}{rgb}{0.26666666666666666,0.26666666666666666,0.26666666666666666}
\definecolor{qqwuqq}{rgb}{0.,0.39215686274509803,0.}
\definecolor{zzttqq}{rgb}{0.6,0.2,0.}
\definecolor{xdxdff}{rgb}{0.49019607843137253,0.49019607843137253,1.}
\definecolor{qqqqff}{rgb}{0.,0.,1.}
\definecolor{cqcqcq}{rgb}{0.7529411764705882,0.7529411764705882,0.7529411764705882}
\theoremstyle{plain}
\newtheorem{theorem}[subsection]{Theorem}
\newtheorem{lemma}[subsection]{Lemma}
\newtheorem{prop}[subsection]{Proposition}
\theoremstyle{definition}
\newtheorem{defi}[subsection]{Definition}
\newtheorem{example}[subsection]{Example}
\newtheorem{remark}[subsection]{Remark}
\newtheorem{note}[subsection]{Note}
\newcommand{\uu}{\cup}
\newcommand{\ii}{\cap}
\newcommand{\sci}{\subset}
\newcommand{\es}{\emptyset}
\newcommand{\set}[1]{\{#1\}}
\newcommand{\ga}{\alpha}
\newcommand{\gb}{\beta}
\newcommand{\gd}{\delta}
\renewcommand{\gg}{\gamma}
\newcommand{\tit}{\textit}
\newcommand{\B}{\boldsymbol}
\newcommand{\D}[1]{\mathbb{#1}}
\newcommand{\te}{\text}
\begin{document}
To appear, Uniform Distribution Theory
\title{Optimal quantization for piecewise uniform distributions}

\author{Joseph Rosenblatt}
\address{Department of Mathematical Sciences \\
Indiana University-Purdue University Indianapolis\\
402 N. Blackford Street \\
Indianapolis, IN 46202-3217, USA.}
\email{rosnbltt@illinois.edu}

 \author{Mrinal Kanti Roychowdhury}
\address{School of Mathematical and Statistical Sciences\\
University of Texas Rio Grande Valley\\
1201 West University Drive\\
Edinburg, TX 78539-2999, USA.}
\email{mrinal.roychowdhury@utrgv.edu}

\subjclass[2010]{60Exx, 94A34.}
\keywords{Optimal quantizers, quantization error, uniform distribution}
\date{}
\thanks{The research of the second author was supported by U.S. National Security Agency (NSA) Grant H98230-14-1-0320}

\date{}
\maketitle

\pagestyle{myheadings}\markboth{Joseph Rosenblatt and Mrinal Kanti Roychowdhury}{Optimal quantization for piecewise uniform distributions}

\begin{abstract}
Quantization for a probability distribution refers to the idea of estimating a given probability by a discrete probability supported by a finite number of points. In this paper, firstly a general approach to this process is outlined using independent random variables and ergodic maps; these give asymptotically the optimal sets of $n$-means and the $n$th quantization errors for all positive integers $n$.  Secondly two piecewise uniform distributions are considered on $\mathbb R$: one with infinite number of pieces and one with finite number of pieces. For these two probability measures, we describe the optimal sets of $n$-means and the $n$th quantization errors for all $n\in \mathbb N$. It is seen that for a uniform distribution with infinite number of pieces to determine the optimal sets of $n$-means for $n\geq 2$ one needs to know an optimal set of $(n-1)$-means, but for a uniform distribution with finite number of pieces one can directly determine the optimal sets of $n$-means and the $n$th quantization errors for all $n\in \mathbb N$.
\end{abstract}

\section{Introduction}

Quantization is the process of converting a continuous analog signal into
a digital signal of $k$ discrete levels, or converting a digital signal of $n$
levels into another digital signal of $k$ levels, where $k < n$. It is essential
when analog quantities are represented, processed, stored, or transmitted
by a digital system, or when data compression is required. It is a classic
and still very active research topic in source coding and information
theory. It has broad application in engineering and technology, for example in signal processing and data compression (see \cite{GG, GN, Z}). For mathematical treatment of quantization one is referred to Graf and Luschgy's book (see \cite{GL}). For most recent work on quantization for uniform distributions interested readers can see \cite{DR, R}. Let $P$ denote a Borel probability measure on $\D R^d$ and let $\|\cdot\|$ denote the Euclidean norm on $\D R^d$ for any $d\geq 1$.  Then, the $n$th \textit{quantization
error} for $P$ (of order $2$) is defined by
\begin{equation*} \label{eq1} V_n:=V_n(P)=\inf \Big\{\int \min_{a\in\alpha} \|x-a\|^2 dP(x) : \alpha \subset \mathbb R^d, \text{ card}(\alpha) \leq n \Big\},\end{equation*}
where the infimum is taken over all subsets $\ga$ of $\D R^d$ with card$(\ga)\leq n$ for $n\geq 1$. We assume that $\int\|x\|^2 dP(x)<\infty$ to make sure that there is a set $\ga$ for which the infimum occurs (see \cite{AW, GKL, GL, GL2}). Such a set $\ga$ for which the infimum occurs and contains no more than $n$-points is called an \tit{optimal set of $n$-means} and the elements of an optimal set are called \tit{optimal quantizers}. Let $U$ be the largest open subset of $\D R^d$ for which $P(U)=0$. Then, $\D R^d\setminus U$ is called the support of $P$, and is denoted by $\te{supp}(P)$. Notice that if $\te{supp}(P)$ is finite, i.e., if $\te{card}(\te{supp}(P))=N$ for some positive integer $N$, then $V_{n}(P)=0$ for all $n\geq N$. On the other hand, if the support of $P$ is countable, or if $P$ is a continuous probability measure, then an optimal set of $n$-means contains exactly $n$-elements, i.e., $V_n(P)>V_{n+1}(P)$ for all $n\in \D N$ (also see \cite{GL}).
For a finite set $\ga \sci \D R^d$, by $M(a|\ga)$ we denote the set of all elements in $\D R^d$ which are nearest to $a$ among all the elements in $\ga$, i.e.,
\[M(a|\ga)=\set{x \in \D R^d : \|x-a\|=\min_{b \in \ga}\|x-b\|}.\]
$M(a|\ga)$ is called the \tit{Voronoi region} generated by $a\in \ga$. On the other hand, the set $\set{M(a|\ga) : a \in \ga}$ is called the \tit{Voronoi diagram} or \tit{Voronoi tessellation} of $\D R^d$ with respect to the set $\ga$. Let us now state the following proposition (see \cite{GG, GL}).
\begin{prop} \label{prop0}
Let $\alpha$ be an optimal set of $n$-means with respect to a probability distribution $P$, $a \in \ga$, and $M (a|\ga)$ be the Voronoi region generated by $a\in \ga$. Then, for every $a \in\ga$,

$(i)$ $P(M(a|\ga))>0$, $(ii)$ $ P(\partial M(a|\ga))=0$, and $(iii)$ $a=E(X : X \in M(a|\ga))$.
\end{prop}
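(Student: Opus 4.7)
I would establish the three properties by contradiction, exploiting that $\ga$ minimizes the distortion $D(\gb) := \int \min_{b \in \gb} \|x - b\|^2 \, dP(x)$ over finite $\gb \ci \D R^d$ with $\te{card}(\gb) \le n$, so that $D(\ga) = V_n$. For $(i)$, I would suppose $P(M(a|\ga)) = 0$. Then $\min_{b \in \ga} \|x - b\|^2 = \min_{b \in \ga \setminus \set{a}} \|x - b\|^2$ off a $P$-null set, whence $D(\ga \setminus \set{a}) = V_n$. Choosing any $x_0 \in \te{supp}(P)$ at positive distance from $\ga \setminus \set{a}$ and replacing $a$ by $x_0$ strictly decreases the integrand on a neighbourhood of $x_0$ of positive $P$-measure, yielding $D((\ga \setminus \set{a}) \uu \set{x_0}) < V_n$, a contradiction.

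For $(iii)$, given $P(M(a|\ga)) > 0$, I would view the contribution of $M(a|\ga)$ to the distortion as $f(y) = \int_{M(a|\ga)} \|x - y\|^2 \, dP(x)$, a strictly convex function of $y$ whose gradient $\nabla f(y) = -2 \int_{M(a|\ga)} (x - y) \, dP(x)$ vanishes precisely at $y = a^\ast := E(X \mid X \in M(a|\ga))$. If $a \neq a^\ast$, replacing $a$ in $\ga$ by $a^\ast$ strictly decreases $f(a)$, and refreshing the Voronoi partition about the new centre set can only decrease the total distortion further, contradicting the optimality of $\ga$.

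Finally, for $(ii)$, I would note $\partial M(a|\ga) \ci \UUm{H_{a,b}}{b \in \ga \setminus \set{a}}$, where $H_{a,b} = \setm{x \in \D R^d}{\|x-a\| = \|x-b\|}$ is the perpendicular bisector of $a$ and $b$. For the piecewise uniform distributions that are the focus of this paper, each $H_{a,b}$ has Lebesgue measure zero and hence $P$-measure zero, so $(ii)$ follows from the finiteness of $\ga$. For fully general $P$ one must additionally exclude positive $P$-mass on a bisector at an optimal configuration; this is the main technical subtlety, and I would handle it via a one-sided perturbation of $a$ along the unit normal to $H_{a,b}$, which produces a jump in the directional derivative of the distortion proportional to $P(H_{a,b})$. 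Such a jump is incompatible with the first-order optimality from $(iii)$ unless $P(H_{a,b}) = 0$, giving the conclusion.
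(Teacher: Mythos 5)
The paper never proves Proposition \ref{prop0}: it is stated with a citation to \cite{GG, GL}, so there is no in-paper argument to compare against, and your proposal has to be judged on its own. What you outline is essentially the standard proof from Graf and Luschgy's book: (i) by deleting a zero-mass center and re-inserting a support point, (iii) by strict convexity of $y\mapsto\int_{M(a|\ga)}\|x-y\|^2\,dP$ together with the observation that passing to the nearest-neighbour assignment for the new center set can only help (this step is what avoids any double counting on ties, and it also means your use of (iii) inside (ii) is not circular), and (ii) by reducing $\partial M(a|\ga)$ to bisectors. That is the right architecture.

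Two caveats. First, (i) needs the tacit hypothesis $n\le\te{card}(\te{supp}(P))$ (compare $P=\delta_0$, $n=2$, $\ga=\set{0,1}$): it is exactly what guarantees a point $x_0\in\te{supp}(P)$ at positive distance from $\ga\setminus\set{a}$, so it should be made explicit, as it is in the cited sources. Second, in (ii) your perturbation cannot yield $P(H_{a,b})=0$: the bisector may carry positive mass at points strictly closer to a third center, and no perturbation of $a$ detects that mass. What the one-sided argument does give is $P\big(M(a|\ga)\ii M(b|\ga)\big)=0$. Concretely, with $u=(b-a)/\|b-a\|$ and $W$ the tie set inside $M(a|\ga)$, the two one-sided directional derivatives of the distortion at $t=0$ differ by $-2\int_W|(x-a)\cdot u|\,dP$, while optimality forces the right derivative to be $\ge 0$ and the left to be $\le 0$; hence the integral vanishes, and since $(x-a)\cdot u=\|b-a\|/2\neq 0$ on $H_{a,b}$, the mass of $M(a|\ga)\ii M(b|\ga)$ is zero. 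This weaker conclusion suffices, because every boundary point of the closed region $M(a|\ga)$ is a tie point (take $x_k\notin M(a|\ga)$ with $x_k\to x$ and pass to a constant competitor $b$ by finiteness of $\ga$), so $\partial M(a|\ga)\ci\UU_{b\neq a}\big(M(a|\ga)\ii M(b|\ga)\big)$. With these repairs your argument is complete, and your remark that (ii) is immediate for the absolutely continuous, piecewise uniform measures actually used in this paper is correct.
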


Notice that for $a\in \ga$, $a=E(X : X \in M(a|\ga))$ implies that the point $a$ is the conditional expectation of the random variable $X$ given that $X$ takes values in the Voronoi region $M(a|\ga)$. In \cite{DR}, Dettmann and Roychowdhury considered a uniform distribution on an equilateral triangle, and investigated the optimal sets of $n$-means and the $n$th quantization errors for the uniform distribution for all $n\geq 2$. In this direction one can also see \cite{R}. In this paper, in Section~\ref{sec2} we describe some general approaches to construct asymptotically optimal $n$-means that are highly worth considering, and it seems that they have not been looked at in the applied or theoretical literature on quantization.  Then, after some preliminaries in Section~\ref{sec3}, and in Section~\ref{sec4}, we analyze optimality for a piecewise uniform distribution with infinitely many pieces on the real line, and in Section~\ref{sec5}, we analyze optimality for a piecewise uniform distribution with finitely many pieces. For the uniform distribution with infinitely many pieces, in Lemma~\ref{lemma21} and Lemma~\ref{lemma22}, we first determine the optimal sets of $n$-means and the $n$th quantization errors for $n=2$ and $n=3$. Then, we prove Proposition~\ref{prop212}, Proposition~\ref{prop23}, Proposition~\ref{prop214} and  Proposition~\ref{prop215}, which help us to give the definition Definition~\ref{defi1} of a \tit{canonical sequence}. With the help of the canonical sequences, in Theorem~\ref{Th1}, we give an induction formula to determine the optimal sets of $n$-means and the $n$th quantization errors for all $n\geq 2$. We also give a tabular representation of several canonical sequences. For the uniform distribution with finitely many pieces, described in Section~\ref{sec5}, one can directly determine the optimal sets of $n$-means and the $n$th quantization error for any $n\in \D N$, induction formula is not needed in this case.

\section{The General Setting}\label{sec2}

We are interested in explicit sequences that are optimal $n$-means, or asymptotically optimal $n$-means, for given probability measures.  In later sections of this article, explicit $n$-means will be derived for piecewise uniform measures in a couple of different scenarios.  For now, as a way of framing issues with  and motivating that work, we want to consider some simple ways of generating discrete finite sets of points that can possibly be asymptotically optimal $n$-means, if not optimal ones, and get some control on the rate that the distortion error tends to zero.

The methods we consider here are both random models with uncorrelated variables and dynamical models in which there can be correlation of the outputs.  Each has advantages over the other.  They also have advantages over carrying out the detailed, hard work needed to construct
explicit optimal $n$-means with the trade-off being that one generally obtains only asymptotically optimal results.

For concreteness, we keep this introductory discussion limited to the interval $[0,1)\mod 1$ in Lebesgue measure.  We are interested in easy methods of obtaining a sequence $(\beta(k): k \ge 1)$ such that for all $n$,
$\int_0^1 \min\limits_{1 \le k \le n} |x - \beta(k)|^r\, dx$ is as small as possible.  The classical case is with $r =2$.   Indeed, it is also reasonable to consider the unaveraged error $\min\limits_{1 \le k \le n} |x - \beta(k)|$ itself.  Given a choice of $(\beta(k):k\ge 1)$, we would like to know the exact rate at which the distortion error tends to zero, and compare that with the optimal distortion error rate.

\subsection {IID Models}

Consider a method of randomly generating $n$-means for this simplest case of uniform measure on the interval $[0,1)$ modulo one.  We take $\B{\beta} = (\beta(k): k \ge 1)$ to be IID random variables with uniform distribution.  We actually are taking $\beta(k,\omega)$ with $\omega \in \Omega$ as the model underlying probability space $(\Omega,P)$, but we will suppress the dependence on $\omega$ if it will not create confusion.

The naive approach would be to estimate how many terms $(\beta(1),\dots,\beta(n))$ are needed so that each interval $I_j = [j/M,(j+1)/M),$ for $j = 0,\dots,M-1$, contains at least one point, with high probability.  This will guarantee that the quantization error $\int_0^1 \min\limits_{1 \le k \le n}|x - \beta(k)|^2 \, dx$ is no larger than $M\int_0^{1/M} x^2 \,dx = 1/3M^2$, a common estimate for the optimal quantization error.  It is easiest to consider the probability of the complementary case: there is some $I_j$ such that no term $\beta(k),k=1,\dots,n$ is in $I_j$.  This probability is $(1 - \frac 1M)^n$ for each such $j$.  So an estimate for the entire scope of the possibility is $M(1 - \frac 1M)^n$.  Taking $M = n/\ln(n)$ as a real variable would give for large $n$,
$M(1 - \frac 1M)^ n \sim 1/\ln(n)$.  Hence, with probability $1 - 1/\ln (n)$, each $I_j$ contains some $\beta(k), 1 \le k \le n$.  This gives the estimate $1/3M^2 = \ln^2(n)/2n^2 $ for the quantization error with this probability.  Asymptotically, this translates to taking $M \ge 1$ and then $n = M\ln (M)$ as a real variable to derive the same estimate with probability $1 - 1/\ln M\asymp 1 - 1/n$ as $n \to \infty$.  This only gives convergence in distribution as $n$ goes to $\infty$, but a simple increase in growth of $M$ can guarantee an almost sure result.  Note: instead of the optimal distortion error of $C/M^2\ln^2(M)$, this approach is giving a somewhat worse estimate $C/M^2$.

However, we can do better.  Consider the probability $P(\{\omega:n\min\limits_{1\le k \le n} |x - \beta(k,\omega)| \ge t\})$.  It is easy to see that this is $(1 - \frac {2t}n)^n$.  So scaling of the distortion error by $n$ results in convergence in distribution to the distribution function $d(t) = 1 - e^{-2t}, t \ge 0$, one can also compute expectations, and other moments.  For example,
\begin{align*}
&\int_{\Omega} n\min\limits_{1\le k \le n} |x - \beta(k,\omega)|\, dP(\omega)\\
& =\int_0^\infty P(\{\omega: n\min\limits_{1\le k \le n} |x - \beta(k,\omega)| \ge t\}) \, dt
= \int\limits_0^{n/2} (1  - 2t/n)^n \, dt
= \frac n{2(n+1)}.
\end{align*}
Going further than this distributional convergence is not going to be possible because of the Hewitt-Savage Theorem~\cite{HS}.  It shows that if this sequence converges a.e. or even just in measure, then the limit function would be a constant.  The distributional convergence shows that this is not possible.

But if we also integrate with respect to $x$ instead of $\omega$, then there is a.s. convergence to a computable constant.  That is, there is a non-zero constant $C$ such that for a.e. $\omega$, $\int_0^1 n\min\limits_{1\le k \le n} |x - \beta(k,\omega)| \, dx$ converges to $C$ as $n\to \infty$.  This is not a difficult calculation, if we use estimates of the series of variances for this distortion rate.  This convergence, indeed the distributional convergence above, shows that the random $n$-means are asymptotically optimal.  For details of the calculations in greater generality, see Cohort~\cite{PC}.  This article contains other interesting results related to a.s. convergence of the random proxy for optimal $n$-means and conclusions that follow about the asymptotic optimality of the random $n$-means.

The quantization process is closely related to the discrepancy estimates for the random sequence $(\beta(k,\omega))$.  See Kuipers and Niederreiter~\cite{KN}, especially the chapter notes, for a wealth of background information and references on discrepancy.  We again take our interval modulo one, but we suppress this in the notation for simplicity.

\begin{defi} Given a sequence $ \B{\beta} = (\beta(k):k\ge 1)$ in $[0,1)$, the discrepancy $D_n(\B{\beta})$ is defined by  \[D_n(\B{\beta}) =
\sup\left \{|\frac 1n\sum\limits_{k=1}^n 1_{[x,y)}(\beta(k)) - (y - x)|: 0 \le x < y < 1\right \}.\]
The smaller discrepancy  $D_n^*(\B{\beta})$ is defined by
\[D_n^*(\B{\beta}) =
\sup\left \{|\frac 1n\sum\limits_{k=1}^n 1_{[0,y)}(\beta(k)) - y|: 0 \le  y < 1\right \}.\]
\end{defi}

\noindent It is easy to see that $D_n^* \le D_n \le 2D_n^*$.

Now if $D_n < 1/M$, then for any interval $I$ of length $1/M$, there must be some $\beta_k \in I$ with $k \le n$.  So $\min\limits_{1 \le k \le n} |x - \beta(k)| \le 1/M$ too.  Hence, we have the useful basic estimate:
\begin{lemma}  $\min\limits_{1 \le k \le n} |x - \beta(k)| \le D_n(\B{\beta})$.
\end{lemma}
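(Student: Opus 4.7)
The plan rests on a single observation immediate from the definition of discrepancy: if $[a,b) \subseteq [0,1)$ is a half-open subinterval that contains none of the points $\beta(1),\dots,\beta(n)$, then the corresponding summand $|0 - (b-a)| = b - a$ appears in the supremum defining $D_n(\B{\beta})$, so $b - a \le D_n(\B{\beta})$. In other words, every empty half-open subinterval of $[0,1)$ has length at most $D_n(\B{\beta})$. This is exactly the ``contrapositive'' version of the heuristic given just before the lemma statement.

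Given this, fix $x \in [0,1)$ and set $d = \min_{1 \le k \le n} |x - \beta(k)|$. The inequality is trivial when $d = 0$, so assume $d > 0$; by the definition of $d$, no $\beta(k)$ lies in the open interval $(x-d,\,x+d)$. The plan is to exhibit, for every $\varepsilon \in (0,d)$, an empty half-open subinterval of $[0,1)$ of length $d - \varepsilon$. If $x + d \le 1$, the interval $[x,\, x+d-\varepsilon)$ works. Otherwise $x + d > 1$, and the hypothesis $n \ge 1$ forces $x - d \ge 0$, since $x - d < 0$ together with $x + d > 1$ would give $(x-d,\,x+d) \supseteq [0,1)$, forbidding the existence of any $\beta(k)$ at all; in that case $[x - d + \varepsilon,\, x)$ is the desired empty subinterval. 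The preliminary observation then yields $d - \varepsilon \le D_n(\B{\beta})$ in either case, and letting $\varepsilon \to 0^+$ gives $d \le D_n(\B{\beta})$.

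There is no serious obstacle in this argument; the only mildly delicate point is endpoint bookkeeping arising from the fact that $D_n$ is defined via linear half-open intervals $[a,b)$ with $0 \le a < b < 1$, rather than circular intervals. The $\varepsilon$-shrinkage accommodates the strict inequality $b < 1$ in the definition, and the short two-case analysis simply selects on which side of $x$ to place the empty interval when $x$ lies near an endpoint of $[0,1)$.
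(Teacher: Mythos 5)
Your proof is correct and follows essentially the same route as the paper: the paper's justification preceding the lemma is exactly the observation that any interval of length exceeding $D_n(\B{\beta})$ must contain some $\beta(k)$, which is the contrapositive of your key fact that every point-free half-open subinterval has length at most $D_n(\B{\beta})$. You merely make the endpoint bookkeeping and the $\varepsilon\to 0^+$ limit explicit, which the paper leaves implicit.
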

\noindent Thus, the following result of K-L Chung~\cite{C} gives an upper bound on the distortion error.

\begin{theorem} For a.e. $\omega$,
\[\limsup\limits_{n\to \infty} \frac
{\sqrt {2n}D_n^*(\B{\beta}(\omega))}{\sqrt {\ln\ln (n)}} = 1.\]
\end{theorem}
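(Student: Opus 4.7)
The plan is to exploit the identification $D_n^*(\B{\beta}) = \sup_{0 \le y < 1} |F_n(y) - y|$, where $F_n(y) = \frac{1}{n}\sum_{k=1}^n 1_{[0,y)}(\beta(k))$ is the empirical distribution function built from $\B{\beta}$. The statement is then the standard iterated-logarithm law for the Kolmogorov--Smirnov statistic, to be proved by matching lower and upper bounds.

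For the lower bound, I would apply the Hartman--Wintner LIL to the IID bounded sequence $Y_k = 1_{[0,1/2)}(\beta(k)) - 1/2$, which has mean zero and variance $\sigma^2 = 1/4$. Since $\sum_{k=1}^n Y_k = n(F_n(1/2) - 1/2)$, the LIL gives
\[
\limsup_{n\to\infty}\frac{\sqrt{2n}\,|F_n(1/2) - 1/2|}{\sqrt{\ln\ln n}} = 1 \quad \text{a.s.,}
\]
and because $D_n^*(\B{\beta}) \ge |F_n(1/2) - 1/2|$ this already supplies the $\ge 1$ direction. The point $y = 1/2$ is critical: it maximizes the Bernoulli variance $y(1-y)$, and the factor $2\sigma^2 = 1/2$ is exactly what produces the normalization $\sqrt{2n}$ in the theorem.

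For the upper bound, I would invoke the Dvoretzky--Kiefer--Wolfowitz inequality in Massart's sharp form, $P(D_n^* \ge \lambda) \le 2 e^{-2n\lambda^2}$, which is a uniform-in-$y$ analogue of the pointwise sub-Gaussian tail at $y=1/2$. With $\lambda_n = (1+\epsilon)\sqrt{\ln\ln n / (2n)}$ this gives $P(D_n^* \ge \lambda_n) \le 2(\ln n)^{-(1+\epsilon)^2}$. Summing along a geometric subsequence $n_k = \lfloor \theta^k\rfloor$ yields a convergent series, so Borel--Cantelli delivers $D_{n_k}^* \le \lambda_{n_k}$ eventually almost surely. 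A monotonicity sandwich, controlling $n D_n^*$ on the interval $[n_k,n_{k+1}]$ by $n_{k+1} D_{n_{k+1}}^* + (n_{k+1}-n_k)$, lets me pass from the subsequence to all $n$; letting first $\theta \downarrow 1$ and then $\epsilon \downarrow 0$ gives $\limsup_n \sqrt{2n} D_n^* / \sqrt{\ln\ln n} \le 1$ a.s.

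The main obstacle is the sharp constant $1$ in the upper bound, i.e.\ the uniformization in $y$. The pointwise Hartman--Wintner LIL applied at a generic $y$ produces only the local constant $\sqrt{2y(1-y)}$, and a naive union bound over a grid costs an extra $\ln(\text{grid size})$ factor that inflates the constant. The DKW--Massart bound is what makes the argument clean, because its exponent $2n\lambda^2$ is precisely tuned to the maximal variance $1/4$. Without it, one can still succeed via a chaining argument in $y$ with variance-weighted Bernstein inequalities, or via a KMT-type strong approximation of the empirical process by a Brownian bridge combined with the LIL for the bridge, but each of those requires more delicate bookkeeping to avoid losing the optimal constant.
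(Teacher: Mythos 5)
Your lower bound is sound: Hartman--Wintner applied to $Y_k=1_{[0,1/2)}(\beta(k))-\tfrac12$ gives $\limsup_n \sqrt{2n}\,|F_n(1/2)-\tfrac12|/\sqrt{\ln\ln n}=1$ a.s., and $D_n^*\ge |F_n(1/2)-\tfrac12|$, so the ``$\ge 1$'' half is complete. (Note the paper offers no proof of this theorem; it simply cites Chung~\cite{C}, so the job is to reconstruct a classical argument, and your overall route is the natural one.)

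The genuine gap is in the upper bound, at the passage from the geometric subsequence to all $n$. The sandwich $nD_n^*\le n_{k+1}D_{n_{k+1}}^*+(n_{k+1}-n_k)$ is true but quantitatively useless here: with $n_k=\lfloor\theta^k\rfloor$ one has $n_{k+1}-n_k\asymp(\theta-1)n_k$, while the target threshold is of order $\sqrt{n_k\ln\ln n_k}$, so the additive slack divided by $\sqrt{n\ln\ln n}$ tends to infinity for \emph{every} fixed $\theta>1$; letting $\theta\downarrow 1$ afterwards cannot help, since for each fixed $\theta$ the bound you obtain on the limsup is already $+\infty$. Nor can you densify the subsequence to shrink the slack: the Massart tail at $\lambda_n=(1+\epsilon)\sqrt{\ln\ln n/(2n)}$ is only $2(\ln n)^{-(1+\epsilon)^2}$, i.e.\ polylogarithmic in $n$, so Borel--Cantelli summability forces gaps $n_{k+1}-n_k$ far larger than $\sqrt{n_k\ln\ln n_k}$ (for $n_k=k^2$, where the slack would be harmless, the series $\sum_k(\ln n_k)^{-(1+\epsilon)^2}$ diverges). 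What is missing is a maximal-in-$n$ exponential inequality over the block $[n_k,n_{k+1}]$ that does not degrade the constant in the exponent. One standard fix: for each $y$ the process $n(F_n(y)-y)$ is a mean-zero martingale, hence $nD_n^*$ is a nonnegative submartingale; applying Doob's inequality to $\exp(t\,nD_n^*)$ and bounding the exponential moment via Massart's tail yields $P\bigl(\max_{n\le N}nD_n^*\ge\lambda\bigr)\le C\sqrt{\ln\ln N}\,e^{-2\lambda^2/N}$, which restores your Borel--Cantelli argument along $n_k=\lfloor\theta^k\rfloor$ with the sharp constant. Alternatively, the KMT/Brownian-bridge route you mention in closing also works, since the bridge's sup has the same sub-Gaussian constant; but as written, the interpolation step fails and the upper bound is not established.
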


\noindent However, the actual distortion error rate here is likely to be faster.   That is, if we take $d_n(\B{\beta}(\omega)) = \min\limits_{1 \le k \le n} |x - \beta(k,\omega)|$, then some experimentation with estimates suggested that $\limsup\limits_{n\to \infty} \frac
{nd_n(\B{\beta}(\omega)) }{\ln n} < \infty$ for a.e. $\omega$.  Indeed, this is the case.  It was perhaps first proved by L\'evy~\cite{L}.  But many sophisticated extension of this have been achieved, many under the title or order statistics.  See for example the article by Deheuvels~\cite{D}.

If the measure that we are quantizing is not uniform, then we need to adjust the placement of the random variables $(\beta(k): k \ge 1)$.  The obvious approach is to just take $\beta(k)$ to be IID with distribution given by  the fixed probability measure $\nu$.  Notice that then we would under some general assumptions have the empirical measures $\frac 1n \sum\limits_{k=1}^n \delta_{\beta(k)}$ converging weakly to $\nu$.  The result of Theorem 7.5 in Graf and Luschgy~\cite{GL} shows that our random empirical measure would not be asymptotically  optimal except in the case of uniform measure.   However, given an absolutely continuous measure $d\nu = h d\lambda$, with a regular density function $h$, we could choose the $\beta(k)$ to be distributed according to the law $h^3 d\lambda$.  Then we would not only get a good estimate for the quantization error, but we would also have the empirical measures converging weakly to $h d\lambda = d\nu$ itself.  See Graf and Luschgy~\cite {GL} discussion following Theorem 7.5.

\subsection{Ergodic and Diophantine Models}
Consider a dynamical systems approach to asymptotically optimal $n$-means.  For this model, we take an ergodic, measure-preserving mapping $\tau$ of $[0,1]\mod 1$.  For a fixed $y \in [0,1]$, let $\beta(k,y) = \tau^k(y)$.  What can we say about the rate that
$\min\limits_{1\le k \le n} |x - \beta(k,y)|$ tends to zero for arbitrary $x$, and at least a.e. $y$?  Also, is there better stabilization of this if we instead consider the mean behavior
$\int_0^1 \min\limits_{1\le k \le n} |x - \beta(k,y)|^2 \, dx$?  This is the stationary version of the IID case above, where correlation of the $n$-means is being allowed.

So far we know some things, but not enough about this variation on possible asymptotically optimal $n$-means.  Results in this direction will appear in future work.  But it is clear that the ergodicity is not needed for the most important property in obtaining asymptotically optimal $n$-means.  What ergodicity implies is that for a.e. $y$, the orbit  $(\tau^k(y): k \ge 1)$ is dense in $[0,1]$.  This is all that is needed for
$\min\limits_{1\le k \le n} |x - \beta(k,y)|$ to converge to zero for $x$.
What then happens if instead we take as our map a minimal map of $[0,1]$?  The same property would hold for all points.  That is, if we have a minimal map $\tau$ of a compact, metric space $(X,d_X)$, in place of $[0,1]$, then $\min\limits_{1\le k \le n} d_X(x,\tau^k(y))$ also tends to zero for arbitrary $x$ and $y$.  In any such case, it is in general not clear how to obtain a rate for the distortion error, or specific information about the distribution of the $n$-means that are resulting.  This type of issue is why the specific details presented in this article in Section~\ref{sec4} and Section~\ref{sec5} are so useful.  Concrete, completely described optimal $n$-means are worth a great deal in any applied, or theoretical, quantization process.

We might also consider a relative of the dynamical systems approach: a Diophantine method.  Now we take $\beta(k,\theta) = \{k\theta\}$ for all  $k \ge 1$, where $\theta$ is some irrational number and $\{t\}$ denotes the fraction in $[0,1)$ such that $t = \{t\} + k$ for some integer $k$.  We know that $\B{\beta}(\theta) = (\beta (k,\theta): k \ge 1)$ is uniformly distributed in $[0,1]$ and moreover there is an estimate on the discrepancy $D_n(\B{\beta}(\theta))$ that holds for a.e. $\theta$ that comes from classical facts about continued fractions and
Diophantine approximation.  The estimate gives for a.e. $\theta$ and  for all $\delta > 0$, $D_n(\B{\beta}(\theta)) \le \ln((n)^{1 + \delta}/n$ for large enough $n$.  But then if $D_n(\B{\beta}(\theta)) < \frac 1M$, we must have for any interval $I \subset [0,1]$ with $|I|= \frac 1M$, there is  some $k\theta \in I$ with $1 \le k \le n$.  This then gives the discrete set $\{\beta(k) : 1 \le k \leq  n\}$ with a quantization error no larger than $1/3M^2$.  Again, we can translate this to real values by taking $n = M\ln^{1+\delta} (M)$ asymptotically to achieve this quantization error $C/M^2$.  It is not as good as the optimal one that would be $C/M^2\ln^{2+2\delta}(M)$.  Despite the fact that the discrepancy estimate here is better than for the one in the IID case, the unaveraged distortion error is not as good as what one can obtain in the IID case.  The virtue of the Diophantine result is that it is explicit.

What we are observing is that the same approach to over-estimating the distortion error that was used in the random approach will work for this Diophantine approach, replacing the iterated logarithm method of Chung with the theorem of Khinchin~\cite{K}.  See also Kuipers and Niederreiter~\cite{KN} again.  To be more exact, Khinchin's theorem says for any non-decreasing $g$ such that $\sum\limits_{n=1}^\infty \frac 1{g(n)} < \infty$, for a.e. $\theta$, one has for the sequence ${\B{\beta}(\theta)} = (k\theta \mod 1: k \ge 1)$
	\[nD_n(\B{\beta}(\theta)) = O(\ln (n) g(\ln \ln (n))).\]
But just as it proved to be the case in the IID model, using discrepancy for the Diophantine model, to over estimate the Diophantine model distortion error, seems likely to give too large an estimate.  For example, see the results in Graham and Van Lint~\cite{GVL}.  This article not only shows that there is a necessary spread in the distortion rate, but it shows that the optimal behavior for the Diophantine model is with $\theta$ that have bounded terms in the simple continued fraction expansion.  For these, the distortion error is on the order of the optimal distortion error i.e. $d_n(\theta) = O(1/n)$. What is not shown in \cite{GVL}, and seems missing in the literature, is a metric result that gives optimal control on the distortion rate for a.e. $\theta$.

So it is possible that the dynamical system result or the Diophantine result can be improved by a couple of different approaches. One approach is to not consider the random input value, but take a specific very good value of $\theta$, actually the Golden Mean.  As mentioned above, this is what is considered in Graham and Van Lint~\cite{GVL}.  See also Motta, Shipman, and Springer~\cite{MSS} where optimal transitivity is studied to limit the gaps in the sequence.  Another approach would be to use bounded remainder sets so that the discrepancy error can be perhaps better controlled.  See both Haynes, Kelly, and Koivusalo~\cite{HKK}; and Haynes and Koivusalo~\cite{HK}.

 In addition,  we conjecture the following relationships between the asymptotic results from dynamical models and the optimal results that follow in later sections of this paper.  Indeed, let $(\beta(k):1 \le k \le n)$  be either
the dynamical system or Diophantine construction above.  Let $(\alpha_n(k):1 \le k \le n)$ be an optimal set of $n$-means.  While the unaveraged distortion rate is not going to be as good as the optimal distortion rate, averaging seems to have a very strong impact (as is shown in the IID case by Cohort~\cite{C}).  We conjecture though that for every constant $K$, when $n$ is sufficiently large,
\[K+\int_0^1 \min\limits_{1 \le k \le n} |x - \alpha_n(k)|^2 \, dx \le \int_0^1 \min\limits_{1 \le k \le n} |x - \beta(k)|^2\, dx.\]
This result would show that the optimal $n$-means are certainly better than either the random or dynamical approach to quantization.  On the other hand, we also see that there may be lots of examples such that for every constant $R > 1$, when $n$ is sufficiently large,
\[R\int_0^1 \min\limits_{1 \le k \le n} |x - \alpha_n(k)|^2 \, dx \ge \int_0^1 \min\limits_{1 \le k \le n} |x - \beta(k)|^2\, dx.\]
This would mean that the optimal $n$-means are not better as far as the asymptotic behavior of the associated distortion rates are concerned, and that the random or dynamical system approaches give asymptotically optimal $n$-means.

 We summarize what has been demonstrated in this section, Section~\ref{sec2}.  Both the random and the dynamical approaches to quantization give fairly good quantization, but as we will see they do not give as good a quantization error as is possible using optimal quantization.  This fact alone should help to motivate why we want to have explicitly optimal $n$-means.  To accomplish this, in the later sections of this paper we take some care to describe completely how to get optimal $n$-means in a number of different contexts.

\section{Notation and Some Facts} \label{sec3}

Let $P$ be a piecewise uniform distribution with infinitely many pieces on the real line with probability density function (pdf) $f$ given by
\[f(x)=\left\{\begin{array}{ccc}
(\frac 32)^n & \te{ if } 1-\frac 1{3^{n-1}}\leq x\leq 1- \frac 2{3^n} \te{ for } n\in \D N,\\
 0  & \te{ otherwise}.
\end{array}\right.
\]
In the sequel we will write $J_n:=[1-\frac{1}{3^{n-1}}, 1-\frac{2}{3^n}]$ and $J_{(n, \infty)}:=\mathop{\uu}\limits_{j=n+1}^\infty J_j$, where $n\in  \D N$. For $n\in \D N$, by $J_n(0)$ and $J_n(1)$, we denote the left and right end points of the interval $J_n$, respectively, i.e., $J_n(0)=1-\frac{1}{3^{n-1}}$ and $J_n(1)=1-\frac{2}{3^n}$.
\begin{lemma}\label{lemma1}
Let $E(P)$ and $V(P)$ represent the expected value and the variance of a random variable $X$ with distribution $P$. Then, $E(P)=\frac 12$ and $V(P)=\frac{25}{204}$.
\end{lemma}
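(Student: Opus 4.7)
The plan is a direct computation using the piecewise structure of $P$. The key observation is that conditional on $X \in J_n$, the random variable $X$ is uniformly distributed on $J_n$, so its conditional mean is the midpoint of $J_n$ and its conditional variance is $|J_n|^2/12$.

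First I would record the basic geometric facts. The length of $J_n$ is
$(1-\tfrac{2}{3^n}) - (1-\tfrac{1}{3^{n-1}}) = \tfrac{1}{3^n}$, so the probability mass on each piece is $P(J_n) = (\tfrac{3}{2})^n \cdot \tfrac{1}{3^n} = \tfrac{1}{2^n}$, and these sum to $1$, confirming $f$ is a pdf. The midpoint of $J_n$ is
\[
m_n := \tfrac{1}{2}\bigl[(1-\tfrac{1}{3^{n-1}}) + (1-\tfrac{2}{3^n})\bigr] = 1 - \tfrac{5}{2\cdot 3^n},
\]
and since $X\mid X\in J_n$ is uniform, $E(X\mid X\in J_n) = m_n$.

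For the mean, I would write
\[
E(P) = \sum_{n=1}^\infty P(J_n)\, E(X\mid X\in J_n) = \sum_{n=1}^\infty \tfrac{1}{2^n}\Bigl(1 - \tfrac{5}{2\cdot 3^n}\Bigr),
\]
split into two geometric series $\sum 2^{-n} = 1$ and $\sum 6^{-n} = \tfrac{1}{5}$, and collect $1 - \tfrac{5}{2}\cdot \tfrac{1}{5} = \tfrac{1}{2}$.

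For the variance I would use the law of total variance, conditioning on the index $N$ of the piece containing $X$. Then
\[
V(P) = E\bigl[V(X\mid N)\bigr] + V\bigl[E(X\mid N)\bigr].
\]
The first term is $\sum_{n=1}^\infty \tfrac{1}{2^n}\cdot \tfrac{1}{12\cdot 9^n} = \tfrac{1}{12}\sum_{n=1}^\infty 18^{-n} = \tfrac{1}{12}\cdot \tfrac{1}{17} = \tfrac{1}{204}$. For the second term, using $E(P)=\tfrac{1}{2}$, I get
\[
V[E(X\mid N)] = \sum_{n=1}^\infty \tfrac{1}{2^n}\Bigl(\tfrac{1}{2} - \tfrac{5}{2\cdot 3^n}\Bigr)^2 = \tfrac{1}{4}\sum_{n=1}^\infty \tfrac{1}{2^n}\Bigl(1 - \tfrac{10}{3^n} + \tfrac{25}{9^n}\Bigr),
\]
which is three geometric series evaluating to $\tfrac{1}{4}(1 - 2 + \tfrac{25}{17}) = \tfrac{2}{17}$. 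Adding gives $V(P) = \tfrac{1}{204} + \tfrac{24}{204} = \tfrac{25}{204}$. There is no real obstacle here; the only thing to watch is correctly identifying the common ratios $\tfrac{1}{6}$ and $\tfrac{1}{18}$ in the geometric sums and keeping track of the factors of $\tfrac{1}{2}$ and $\tfrac{5}{2}$ when expanding the square.
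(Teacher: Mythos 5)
Your computation is correct, and every numerical value checks out: $P(J_n)=2^{-n}$, the conditional mean $1-\tfrac{5}{2\cdot 3^n}$ (which is exactly the paper's Lemma 3.3 for $E(P(\cdot|J_k))$), the conditional variance $\tfrac{1}{12\cdot 9^n}$, and the geometric sums $\sum 6^{-n}=\tfrac15$, $\sum 18^{-n}=\tfrac1{17}$ give $E(P)=\tfrac12$ and $V(P)=\tfrac1{204}+\tfrac{2}{17}=\tfrac{25}{204}$. Your route is organized differently from the paper's: the paper simply evaluates $E(P)=\sum_n\int_{J_n}x\,dP$ and $V(P)=\sum_n\int_{J_n}(x-\tfrac12)^2\,dP$ by direct piecewise integration, whereas you treat $P$ as a mixture of uniforms with weights $2^{-n}$ and apply the law of total variance, conditioning on the piece index. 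Both are elementary; yours has the advantage of reducing everything to three geometric series and of isolating the per-piece ingredients (midpoints, lengths-squared over twelve) that the paper reuses later (its Lemma 3.3 and Proposition 3.5, where the weighted conditional variance $\tfrac{1}{12}\cdot\tfrac{1}{18^k}$ reappears as the one-point quantization error on $J_k$), while the paper's direct integration needs no probabilistic identity beyond the definition of $V(P)$. Either argument is complete; no gap.
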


\begin{proof} We have
\[E(P)=\sum _{n=1}^{\infty } \int_{J_n}  x dP=\frac 12, \te{ and } V(P)=\sum _{n=1}^{\infty }\int_{J_n}(x-\frac 12)^2 dP=\frac{25}{204},\]
and thus the lemma is yielded.
\end{proof}

\begin{note} Lemma~\ref{lemma1} implies that the optimal set of one-mean is $\set{\frac 1 2}$ and the corresponding quantization error is $\frac{25}{204}$.
Let $k\in \D N$. By $P(\cdot|J_k)$ we denote the restriction of the probability measure $P$ on the interval $J_k$, i.e., $P(\cdot|J_k)=P(\cdot\ii J_k)/P(J_k)$, in other words, for any Borel subset $B$ of $J_k$ we have $P(B|J_k)=\frac{P(B\ii J_k)}{P(J_k)}$. Similarly, write $P(\cdot|J_{(k,\infty)})$ to denote the restriction of the probability measure $P$ on $J_{(k, \infty)}$. For a probability distribution $Q$, by $\ga_n(Q)$, we denote an optimal set of $n$-means for $Q$. For a Borel subset $B$ of $\D R$, by $V(P, \ga_n(Q), B)$, it is meant the quantization error (or distortion measure) contributed by $\ga_n(Q)$ on the set $B$ with respect to the probability distribution $P$. If nothing is mentioned within a parenthesis, by $\ga_n$ and $V_n$, it is meant an optimal set of $n$-means and the $n$th quantization error with respect to the probability distribution $P$.
\end{note}

\begin{lemma} \label{lemma2}
For $k\in \D N$, let $E(P(\cdot|J_k))$ and $E(P(\cdot|J_{(k,\infty)}))$ denote the expectations of the random variables with distributions $P(\cdot|J_k)$ and $P(\cdot|J_{(k, \infty)})$, respectively. Then,
\[E(P(\cdot|J_k))=1-\frac 52 \frac 1{3^k} \te{ and } E(P(\cdot|J_{(k,\infty)}))=1-\frac 12 \frac 1{3^k}.\]
\end{lemma}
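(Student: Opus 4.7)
The plan is to exploit two structural features of the density $f$: first, that $f$ is constant on each piece $J_k$, so the conditional distribution $P(\cdot\mid J_k)$ is genuinely uniform on $J_k$; and second, that the tail $J_{(k,\infty)}$ is an affinely-scaled copy of the whole support of $P$, so its conditional expectation can be read off from $E(P)=\tfrac12$ computed in Lemma~\ref{lemma1}.

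First I would compute the mass $P(J_n)$. The interval $J_n=[1-\tfrac{1}{3^{n-1}},1-\tfrac{2}{3^n}]$ has length $\tfrac{1}{3^{n-1}}-\tfrac{2}{3^n}=\tfrac{1}{3^n}$, so $P(J_n)=(\tfrac{3}{2})^n\cdot\tfrac{1}{3^n}=\tfrac{1}{2^n}$. This gives $P(J_{(k,\infty)})=\sum_{j=k+1}^\infty 2^{-j}=2^{-k}$ and confirms $P(\mathbb R)=1$. Since $f$ is constant on $J_k$, the distribution $P(\cdot\mid J_k)$ is uniform on $J_k$, and therefore $E(P(\cdot\mid J_k))$ is the midpoint of $J_k$:
\[
\tfrac12\Bigl[(1-\tfrac{1}{3^{k-1}})+(1-\tfrac{2}{3^k})\Bigr]=1-\tfrac12\bigl(\tfrac{3}{3^k}+\tfrac{2}{3^k}\bigr)=1-\tfrac{5}{2}\cdot\tfrac{1}{3^k},
\]
which is the first claimed identity.

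For the second identity, I would use the self-similarity of $P$. Consider the affine map $T(x)=3^k x-(3^k-1)$, which sends the interval $[1-\tfrac{1}{3^k},1)$ onto $[0,1)$. A direct check shows $T(J_{k+j})=J_j$ for every $j\ge 1$, and because $f$ on $J_{k+j}$ takes the value $(\tfrac32)^{k+j}=(\tfrac32)^k(\tfrac32)^j$ while the Jacobian of $T^{-1}$ contributes a factor $3^{-k}$ and the normalizing constant $1/P(J_{(k,\infty)})=2^k$, the pushforward of $P(\cdot\mid J_{(k,\infty)})$ under $T$ equals $P$ itself. Hence, if $Y$ has distribution $P$, then $X=T^{-1}(Y)=(Y-1)/3^k+1$ has distribution $P(\cdot\mid J_{(k,\infty)})$, so
\[
E(P(\cdot\mid J_{(k,\infty)}))=\frac{E(P)-1}{3^k}+1=1-\frac{1}{2}\cdot\frac{1}{3^k}
\]
using $E(P)=\tfrac12$.

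There is really no obstacle here: the only thing to be careful about is verifying the self-similarity (equivalently, checking that the densities match up after rescaling), which is a routine computation. As a backup, one could instead prove the second formula directly, writing $E(P(\cdot\mid J_{(k,\infty)}))=2^k\sum_{j=k+1}^\infty \int_{J_j} x\,dP$ and using the first identity together with $\int_{J_j} x\,dP=P(J_j)\cdot E(P(\cdot\mid J_j))=2^{-j}(1-\tfrac{5}{2\cdot 3^j})$; summing the geometric series $\sum_{j=k+1}^\infty 6^{-j}=\tfrac{1}{5\cdot 6^k}$ then yields the same answer.
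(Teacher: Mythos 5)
Your proposal is correct, and the first identity is handled in essentially the paper's way: the paper computes $2^k\int_{J_k}(\tfrac32)^k x\,dx$ directly, while you observe that $P(\cdot\mid J_k)$ is uniform and take the midpoint of $J_k$ (a fact the paper itself records in the remark following the lemma, $E(P(\cdot|J_k))=\tfrac12(J_k(0)+J_k(1))$). For the tail expectation, however, your main argument is genuinely different. The paper simply evaluates $E(P(\cdot|J_{(k,\infty)}))=2^k\sum_{j=k+1}^\infty\int_{J_j}(\tfrac32)^j x\,dx$ by summing the resulting geometric series (and, in its remark, gives the decomposition $2^k\sum_{j>k}P(J_j)E(P(\cdot|J_j))$, which is exactly your backup computation). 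You instead verify that the affine map $T(x)=3^kx-(3^k-1)$ pushes $P(\cdot\mid J_{(k,\infty)})$ forward onto $P$ — the density $2^k(\tfrac32)^{k+j}$ times the Jacobian factor $3^{-k}$ indeed gives $(\tfrac32)^j$ on $J_j$ — and then read off the answer from $E(P)=\tfrac12$ of Lemma~\ref{lemma1}. This self-similarity route is structurally cleaner: it replaces series manipulation by a single change of variables, and it buys more than the lemma asks for, since the same rescaling ($X=(Y-1)/3^k+1$) immediately gives all higher conditional moments of the tail; in particular it explains at a glance the factor $\frac{25}{204}\frac{1}{18^k}$ in Proposition~\ref{prop1}, as the tail variance scales by $9^{-k}$ and the tail mass by $2^{-k}$. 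The paper's direct computation, by contrast, requires no structural observation and is self-contained at the cost of a slightly longer calculation. Both arguments are complete; your only obligation, which you acknowledge and which checks out, is the verification that the densities match under $T$.
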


\begin{proof} By the definition of the conditional expectation, we have
\[E(P(\cdot|J_k))=\int_{J_k} x dP(\cdot|J_k)=\frac 1{P(J_k)} \int_{J_k}x dP =2^k\int_{J_k} (\frac 3 2)^k x dx=1-\frac 52 \frac 1{3^k}, \te{ and } \]
\begin{align*}  E(P(\cdot|J_{(k,\infty)}))&=\int_{J_{(k, \infty)}}x d P(\cdot|J_{(k,\infty)})=\frac 1{P(J_{(k,\infty)})} \sum_{j={k+1}}^\infty \int_{J_j} x dP=2^k\sum_{j={k+1}}^\infty \int_{J_j} (\frac 32)^j x dx,
\end{align*}
implying $E(P(\cdot|J_{(k,\infty)}))=1-\frac 12 \frac 1{3^k}$,
and thus the lemma is yielded.
\end{proof}

\begin{remark} Lemma~\ref{lemma2} implies that $\ga_1(P(\cdot|J_k))=\set{1-\frac 52 \frac 1{3^k}}$,  $\ga_1(P(\cdot|J_{(k,\infty)}))=\set{1-\frac 12 \frac 1{3^k}}$, $E(P(\cdot|J_k))=\frac 12(J_k(0)+J_k(1))$,  and $E(P(\cdot|J_{(k,\infty)}))=\frac 12 (J_{k+1}(1)+J_{k+2}(0))$. $E(P(\cdot|J_{(k,\infty)}))$ can also be calculated in the following way:
\[E(P(\cdot|J_{(k,\infty)}))=\frac 1{P(J_{(k,\infty)})} \sum_{j={k+1}}^\infty P(J_j)E(P(\cdot|J_j))=2^k \sum_{j={k+1}}^\infty \frac 1{2^j}(1-\frac 52 \frac 1{3^j})=1-\frac 12 \frac 1{3^k}.\]
\end{remark}

\begin{prop}\label{prop1}
Let $k, n\in \D N$. Then, the set $\set{1-\frac{1}{3^{k-1}}+\frac{2i-1}{2n}\frac 1{3^k} : 1\leq i\leq n}$ is a unique optimal set of $n$-means for $P(\cdot|J_k)$, i.e., $\ga_n(P(\cdot|J_k))=\set{1-\frac{1}{3^{k-1}}+\frac{2i-1}{2n}\frac 1{3^k} : 1\leq i\leq n}$. Moreover,
\[V(P, \ga_n(P(\cdot|J_k)), J_k)=\frac 1{n^2}\frac 1{12} \frac{1}{18^k} \te{ and } V(P, \ga_1(P(\cdot|J_{(k, \infty)})), J_{(k, \infty)})=\frac{25}{204}\frac 1{18^k}.\]
\end{prop}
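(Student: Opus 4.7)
The first step in my plan is to identify $P(\cdot\mid J_k)$ as essentially the uniform distribution on $J_k$. Since $P$ has constant density $(3/2)^k$ on $J_k$ and $P(J_k)=1/2^k$, the conditional density is $3^k$ on the interval $J_k$ of length $1/3^k$; so $P(\cdot\mid J_k)$ is uniform on $J_k$. Then the first part reduces to the classical fact that for a uniform distribution on an interval $[a,a+L]$ the unique optimal set of $n$-means is $\{a+\frac{(2i-1)L}{2n}:1\le i\le n\}$ with $n$th quantization error $L^2/(12n^2)$. I would prove this classical fact using Proposition~\ref{prop0}: the Voronoi boundaries must be the midpoints $(a_i+a_{i+1})/2$, and on each Voronoi cell the centroid condition forces $a_i$ to be the midpoint of the cell. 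Setting up the linear recurrence in the $a_i$ (with endpoint conditions) has a unique solution giving equally spaced points. Substituting $a=1-1/3^{k-1}$ and $L=1/3^k$ yields the claimed set.

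For the distortion formula on $J_k$, I would use the identity
\[
V(P,\ga_n(P(\cdot\mid J_k)),J_k)=P(J_k)\cdot V_n(P(\cdot\mid J_k))=\frac{1}{2^k}\cdot\frac{(1/3^k)^{2}}{12n^{2}}=\frac{1}{n^{2}}\frac{1}{12}\frac{1}{18^{k}},
\]
which is the claim.

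For the second distortion formula, I would exploit the self-similarity of $P$ under the affine map $\phi(y)=1-\frac{1}{3^k}(1-y)$. One checks directly that $\phi$ bijects $J_j$ onto $J_{k+j}$ for every $j\ge 1$, contracts lengths by $1/3^k$, and that the pushforward of $P$ under $\phi$ is exactly $P(\cdot\mid J_{(k,\infty)})$ (the densities match after accounting for the $2^k$ normalization). Consequently, if $Y\sim P$ and $X=\phi(Y)\sim P(\cdot\mid J_{(k,\infty)})$, then $X-\bigl(1-\tfrac{1}{2\cdot 3^k}\bigr)=\frac{1}{3^k}(Y-\tfrac12)$, so by Lemma~\ref{lemma1},
\[
\mathrm{Var}\bigl(P(\cdot\mid J_{(k,\infty)})\bigr)=\frac{1}{9^{k}}\cdot\frac{25}{204}.
\]
Multiplying by $P(J_{(k,\infty)})=1/2^{k}$ gives $V(P,\ga_1(P(\cdot\mid J_{(k,\infty)})),J_{(k,\infty)})=\frac{25}{204}\frac{1}{18^{k}}$, as required.

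The main obstacle, if there is one, is the rigorous proof of uniqueness of the optimal $n$-means for the uniform distribution on an interval, because it requires a global argument ruling out other critical configurations rather than just verifying the centroid conditions at the proposed points. I would handle this by observing that the distortion function, viewed as a function of ordered tuples $(a_1<\cdots<a_n)$, is strictly convex on the region determined by a fixed Voronoi partition, and any optimal configuration must satisfy both the centroid and the midpoint-boundary conditions, whose unique simultaneous solution is the arithmetic progression above. Everything else is routine substitution.
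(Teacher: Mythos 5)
Your proposal is correct, and the computations check out: the conditional density on $J_k$ is indeed $3^k$ on an interval of length $1/3^k$, the identity $V(P,\ga_n(P(\cdot|J_k)),J_k)=P(J_k)\,V_n(P(\cdot|J_k))$ gives $\frac{1}{2^k}\cdot\frac{(1/3^k)^2}{12n^2}=\frac{1}{12n^2}\frac{1}{18^k}$, and the affine map $\phi(y)=1-\frac{1}{3^k}(1-y)$ really does carry $J_j$ onto $J_{k+j}$ with the pushforward of $P$ equal to $P(\cdot|J_{(k,\infty)})$ (the density $(3/2)^j\cdot 3^k$ matches $2^k(3/2)^{k+j}$). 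For the first formula your route is essentially the paper's: both reduce to the classical equally-spaced-midpoints result for a uniform distribution on an interval; the paper simply asserts that the Voronoi boundaries split $J_k$ into $n$ equal pieces and then integrates $(3/2)^k(x-c)^2$ over one cell and multiplies by $n$, whereas you spell out the centroid/midpoint conditions and a convexity argument for uniqueness, which is more careful than the published proof (the paper's uniqueness claim is stated without detail). For the second formula your route is genuinely different: the paper evaluates $\sum_{n=k+1}^\infty\int_{J_n}(\tfrac32)^n\bigl(x-(1-\tfrac12\tfrac1{3^k})\bigr)^2dx$ directly and simplifies, while you conjugate by the self-similarity $\phi$ and deduce $\mathrm{Var}(P(\cdot|J_{(k,\infty)}))=\frac{1}{9^k}\cdot\frac{25}{204}$ from Lemma~\ref{lemma1}, then multiply by $P(J_{(k,\infty)})=2^{-k}$. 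Your scaling argument is cleaner and explains structurally why the constant $\frac{25}{204}$ reappears scaled by $18^{-k}$, at the cost of having to verify the pushforward identity; the paper's direct summation is more computational but self-contained. No gaps.
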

\begin{proof}
Since $P(\cdot|J_k)$ is uniformly distributed on $J_k$, the boundaries of the Voronoi regions of an optimal set of $n$-means will divide the interval $[1-\frac{1}{3^{k-1}}, 1-\frac 2{3^k}]$ into $n$ equal subintervals, i.e., the boundaries of the Voronoi regions are given by
\[\set{1-\frac{1}{3^{k-1}}, \, 1-\frac{1}{3^{k-1}}+\frac {1}{n}\frac 1{3^k}, \, 1-\frac{1}{3^{k-1}}+\frac {2}{n}\frac 1{3^k},\, \cdots, 1-\frac{1}{3^{k-1}}+\frac {n-1}{n}\frac 1{3^k}, 1-\frac 2{3^k}}.\]
This implies that an optimal set of $n$-means for $P(\cdot|J_k)$ is unique, and it consists of the midpoints of the boundaries of the Voronoi regions, i.e., the optimal set of $n$-means for $P(\cdot|J_k)$ is given by $\set{1-\frac{1}{3^{k-1}}+\frac{2i-1}{2n}\frac 1{3^k} : 1\leq i\leq n}$ for any $n\geq 1$. Then, the $n$th quantization error for $P$ due to the set $\ga_n(P(\cdot|J_k))$ on $J_k$ is given by
\begin{align*}
&V(P, \ga_n(P(\cdot|J_k)), J_k)=n \times (\te{the quantization error in each Voronoi region})\\
&=n \Big(\int_{[1-\frac{1}{3^{k-1}}, 1-\frac{1}{3^{k-1}}+\frac{1}{n}\frac 1{3^k}]} \Big(\frac 32\Big)^k \Big(x-(1-\frac{1}{3^{k-1}}+\frac{1}{2n}\frac 1{3^k})\Big)^2  dx\Big),
\end{align*}
which after simplification implies $V(P, \ga_n(P(\cdot|J_k)), J_k)= \frac 1{n^2}\frac 1{12} \frac{1}{18^k}$. Again, $E(P(\cdot|J_{(k,\infty)}))=1-\frac 12 \frac 1{3^k}$, and so,
 \[V(P, \ga_1(P(\cdot|J_{(k, \infty)})), J_{(k, \infty)})=\sum _{n=k+1}^{\infty }\int_{J_n} (x-(1-\frac 12 \frac 1{3^k}))^2 dP=\sum _{n=k+1}^{\infty }\int_{J_n} (\frac 32)^n(x-(1-\frac 12 \frac 1{3^k}))^2 dP,\]
 which upon simplification yields $V(P, \ga_1(P(\cdot|J_{(k, \infty)})), J_{(k, \infty)})=\frac{25}{204}\frac 1{18^k}$.
Thus, the proof of the proposition is complete.
\end{proof}
In the following section, we investigate the optimal sets of $n$-means for $n\geq 2$. Once the optimal sets of $n$-means are known the corresponding quantization error can easily be calculated.

\section{Optimal Sets of $n$-Means for $n\geq 2$} \label{sec4}
In this section, we first determine the optimal sets of $n$-means for $n=2$ and $n=3$.
\begin{lemma} \label{lemma21}
Let $\ga:=\set{a_1, a_2}$ be an optimal set of two-means such that $a_1<a_2$. Then, $a_1= \frac 16$ and $a_2=\frac 56$, and the corresponding quantization error is $V_2=\frac{7}{612}$.
\end{lemma}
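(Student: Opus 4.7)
The plan is to establish $V_2 \le 7/612$ by evaluating the distortion of the natural candidate $\{1/6, 5/6\}$, and then to use Proposition~\ref{prop0} together with a case analysis on the Voronoi boundary to rule out every other configuration.

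For the upper bound, I would observe that $1/6$ is the midpoint of $J_1 = [0, 1/3]$, hence $1/6 = E(P(\cdot|J_1))$ by Lemma~\ref{lemma2}, and that $5/6 = 1 - \frac{1}{2\cdot 3} = E(P(\cdot|J_{(1,\infty)}))$, again by Lemma~\ref{lemma2}. Their midpoint is $1/2$, which lies in the gap $(1/3, 2/3)$ separating $J_1$ from $J_{(1,\infty)}$. Consequently the Voronoi region of $1/6$ meets $\text{supp}(P)$ in exactly $J_1$, while that of $5/6$ meets it in exactly $J_{(1,\infty)}$. Applying Proposition~\ref{prop1} with $n=1$ and $k=1$, the contribution to the distortion on $J_1$ is $\frac{1}{12}\cdot\frac{1}{18} = \frac{1}{216}$, and the contribution on $J_{(1,\infty)}$ is $\frac{25}{204}\cdot\frac{1}{18} = \frac{25}{3672}$. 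These sum to $\frac{17+25}{3672} = \frac{7}{612}$, so $V_2 \leq 7/612$.

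For the lower bound, let $\{a_1, a_2\}$ be any optimal pair with $a_1 < a_2$. Since $\text{supp}(P) \subseteq [0,1]$, both quantizers lie in $[0,1]$, and setting $m = (a_1+a_2)/2$ the Voronoi regions are $(-\infty, m]$ and $[m,\infty)$. By Proposition~\ref{prop0}(iii), $a_1$ and $a_2$ are the conditional means of $P$ restricted to these half-lines. I would then split on the location of $m$. If $m \in [1/3, 2/3]$, the Voronoi regions cut $\text{supp}(P)$ into exactly $J_1$ and $J_{(1,\infty)}$, forcing $a_1 = 1/6$, $a_2 = 5/6$, which matches the candidate. If $m < 1/3$, the region of $a_1$ is a proper subinterval of $J_1$, and $a_2$ must serve the remaining tail of $J_1$ together with all of $J_{(1,\infty)}$ from far away; a direct estimate on the error contributed by $[m, 1/3]$ alone (served by a quantizer $a_2$ pulled toward $5/6$) must exceed the budget $7/612$. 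If $m > 2/3$, symmetrically, $a_1$ must serve all of $J_1$ together with a non-trivial part of $J_2 \cup J_3 \cup \cdots$; the conditional mean is dragged away from $1/6$ on $J_1$, which inflates the distortion there past the budget.

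The main obstacle is the explicit quantitative verification in the second and third cases. Concretely, one parametrizes $m$ within each gap $[1 - 2/3^k,\, 1 - 1/3^k]$ and within each piece $J_k$, solves the self-consistency equations $a_1 = E[X\,|\,X \le m]$ and $a_2 = E[X\,|\,X \ge m]$ together with $m = (a_1+a_2)/2$, and verifies that the resulting distortion $V(m)$ strictly exceeds $7/612$ outside the central gap $[1/3, 2/3]$. I expect monotonicity to do the work: as $m$ is pushed away from $[1/3,2/3]$ into any $J_k$ or subsequent gap, one of the quantizers must span pieces of very different densities, and the loss of fit on the heaviest piece $J_1$ (which carries half the total mass) dominates any possible gain on the thinner tail pieces.
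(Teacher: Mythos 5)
Your upper bound and your treatment of the central case are fine and agree with the paper: computing the distortion of $\{1/6,5/6\}$ gives $V_2\le 7/612$, and once the Voronoi midpoint $m=(a_1+a_2)/2$ is known to lie in the gap $(1/3,2/3)$, Proposition~\ref{prop0}(iii) forces $a_1=E(X:X\in J_1)=1/6$ and $a_2=E(X:X\in J_{(1,\infty)})=5/6$. The genuine gap is that the cases $m<1/3$ and $m>2/3$ are never actually ruled out: you only sketch a plan (parametrize $m$ over every piece $J_k$ and every gap, solve the self-consistency equations $a_1=E[X\mid X\le m]$, $a_2=E[X\mid X\ge m]$, and "expect monotonicity to do the work"). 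That is an infinite family of cases, none of which is verified, and the one quantitative claim you do make in the case $m<1/3$ --- that the error contributed by $[m,1/3]$ alone exceeds $7/612$ --- is false for $m$ close to $1/3$, where that contribution is arbitrarily small; what must be shown is that the \emph{total} error exceeds (or cannot go below) $7/612$, and since the excluded configurations approach the optimal one as $m\to 1/3$, a naive continuity/monotonicity argument needs real care to yield the required strict conclusion.

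The paper avoids all of this by bounding the quantizer \emph{positions} rather than the midpoint. If $a_1\ge 1/3$, then every $x\in J_1$ satisfies $\min_i|x-a_i|\ge |x-\tfrac13|$, so
\begin{equation*}
V_2\ \ge\ \int_{J_1}\Bigl(x-\tfrac13\Bigr)^2\,dP\ =\ \tfrac{1}{54}\ >\ \tfrac{7}{612},
\end{equation*}
a contradiction; and if $a_2<2/3$, then on $J_{(1,\infty)}\subset[\tfrac23,1]$ one gets $V_2\ge \int_{J_{(1,\infty)}}(x-\tfrac23)^2\,dP=\tfrac{19}{918}>\tfrac{7}{612}$, again a contradiction. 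Hence $0<a_1<\tfrac13$ and $\tfrac23<a_2<1$, which forces $\tfrac13<\tfrac12(a_1+a_2)<\tfrac23$, i.e.\ exactly your central case, and the proof finishes as you describe. If you replace your case split on $m$ by these two one-sided estimates (each a single explicit integral compared against the budget $7/612$), your argument closes; as it stands, the decisive step is missing.
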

\begin{proof} Consider the set of two points $\gb:=\set{\frac 16, \frac 56}$. The distortion error due to the set $\gb$ is given by
\[\int \min_{a \in \gb}(x-a)^2 dP=\int_{J_1}(x-\frac 16)^2 dP+\sum_{n=2}^\infty \int_{J_n} (x-\frac{5}{6})^2  dP=\frac{7}{612}.\]
Since $V_2$ is the quantization error for two-means, we have $V_2\leq \frac{7}{612}=0.0114379$. Let $\ga:=\set{a_1, a_2}$ be an optimal set of two-means such that $a_1<a_2$. Since the optimal quantizers are the expected values of their own Voronoi regions, we have $0<a_1<a_2<1$. If $\frac 13\leq a_1$, then
\[V_2\geq \int_{J_1}(x-\frac 13)^2 dP=\frac{1}{54}=0.0185185>V_2,\]
which leads to a contradiction. So, we can assume that $a_1< \frac 13$. If $a_2<\frac 23$, then
\[V_2\geq \sum_{n=2}^\infty \int_{J_n} (x-\frac 23)^2  dP=\frac{19}{918}=0.0206972>V_2,\]
which leads to another contradiction. So, we can assume that $\frac 23<a_2$. Since $0<a_1<\frac 13$ and $\frac 23<a_2<1$, we have $\frac 13<\frac 12(a_1+a_2)<\frac 23$ yielding the fact that the Voronoi region of $a_1$ does not contain any point from $J_{(1, \infty)}$ and the Voronoi region $a_2$ does not contain any point from $J_1$. This implies that $a_1=E(X : X\in J_1)=\frac 16$ and $a_2=E(X : X\in J_{(1, \infty)})=\frac 56$, and the corresponding quantization error is $V_2=\frac{7}{612}$, which is the lemma.
\end{proof}

\begin{lemma} \label{lemma22}
Let $\ga:=\set{a_1, a_2, a_3}$ be an optimal set of three-means such that $a_1<a_2<a_3$. Then, $a_1= \frac 16$, $a_2=\frac{13}{18}$, $a_3=\frac{17}{18}$, and  the corresponding quantization error is $V_3=\frac{29}{5508}$.
\end{lemma}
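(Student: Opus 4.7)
The plan is to follow the strategy of Lemma~\ref{lemma21}: exhibit an explicit three-point competitor, compute its distortion to obtain an upper bound on $V_3$, and then rule out every other configuration $\ga=\set{a_1,a_2,a_3}$ with $a_1<a_2<a_3$ by comparing against this bound.

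First I would compute the distortion of the candidate $\gb:=\set{\frac 16,\,\frac{13}{18},\,\frac{17}{18}}$. The midpoints between consecutive candidates, $\frac 12(\frac 16+\frac{13}{18})=\frac 49\in(\frac 13,\frac 23)$ and $\frac 12(\frac{13}{18}+\frac{17}{18})=\frac 56\in(\frac 79,\frac 89)$, both lie in gaps of $\te{supp}(P)$, so the three Voronoi regions intersect $\te{supp}(P)$ precisely in $J_1$, $J_2$, and $J_{(2,\infty)}$. By Lemma~\ref{lemma2} these candidates are exactly $E(P(\cdot|J_1))$, $E(P(\cdot|J_2))$, and $E(P(\cdot|J_{(2,\infty)}))$, so Proposition~\ref{prop1} gives
\[V(P,\gb)=\frac{1}{12}\cdot\frac{1}{18}+\frac{1}{12}\cdot\frac{1}{18^{2}}+\frac{25}{204}\cdot\frac{1}{18^{2}}=\frac{29}{5508},\]
and hence $V_3\le\frac{29}{5508}$.

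For the lower bound, let $\ga=\set{a_1,a_2,a_3}$ with $a_1<a_2<a_3$ be optimal. As in Lemma~\ref{lemma21}, I would first push the extremes inward: $a_1\ge\frac 13$ forces $\int_{J_1}(x-\frac 13)^2\,dP=\frac 1{54}>\frac{29}{5508}$, while $a_3\le\frac 79$ forces $\int_{J_{(2,\infty)}}(x-\frac 79)^2\,dP$, which via the mean and variance of $P(\cdot|J_{(2,\infty)})$ (extractable from Lemma~\ref{lemma2} and Proposition~\ref{prop1}) also exceeds $\frac{29}{5508}$. Thus $a_1\in(0,\frac 13)$ and $a_3\in(\frac 79,1)$. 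Next I would split on the location of $a_2$. If $a_2\le\frac 13$, then $\set{a_1,a_2}\subset J_1$ and the best arrangement is $\set{a_1,a_2}=\ga_2(P(\cdot|J_1))$ together with $a_3=E(P(\cdot|J_{(1,\infty)}))$, giving distortion $\frac 1{864}+\frac{25}{3672}=\frac{117}{14688}>\frac{29}{5508}$. If $a_2\ge\frac 89$, then $a_2,a_3\in J_{(2,\infty)}$, so every $x\in J_2$ has $|x-a_i|\ge\frac 19$; the $J_2$ contribution is then at least $\frac 1{324}$, and adding the unavoidable $\frac 1{216}$ coming from $J_1$ again exceeds $\frac{29}{5508}$. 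For the remaining gap positions $a_2\in(\frac 13,\frac 23)\cup(\frac 79,\frac 89)$, I would apply the conditional-expectation identity of Proposition~\ref{prop0}(iii) to each $a_i$ to derive a small polynomial system in $a_1,a_2,a_3$ and show it admits no solution with $a_2$ in a gap. This leaves the clean case $a_2\in J_2$, in which the midpoints $\frac 12(a_1+a_2)$ and $\frac 12(a_2+a_3)$ fall in the two gaps, and Proposition~\ref{prop0}(iii) combined with Lemma~\ref{lemma2} identifies $(a_1,a_2,a_3)=(\frac 16,\frac{13}{18},\frac{17}{18})$, yielding $V_3=\frac{29}{5508}$.

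The main obstacle is precisely this gap subcase: when $a_2$ lies in $(\frac 13,\frac 23)$ or $(\frac 79,\frac 89)$ its Voronoi region can straddle two adjacent support blocks and Proposition~\ref{prop1} cannot be invoked term by term, so the exclusion has to come from a direct analysis of the conditional-expectation system rather than from a block-wise distortion comparison.
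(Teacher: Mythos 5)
Your upper bound via $\gb=\set{\frac16,\frac{13}{18},\frac{17}{18}}$ and your exclusions of $a_1\ge\frac13$, $a_3\le\frac79$, $a_2\le\frac13$ and $a_2\ge\frac89$ are all correct; each of those comparisons does beat $\frac{29}{5508}$. The genuine gap is exactly the step you flag as the main obstacle and then leave as a plan: the cases $a_2\in(\frac13,\frac23)\cup(\frac79,\frac89)$, and in fact part of your ``clean case'' as well. Concretely: (1) in the clean case $a_2\in J_2$ it is not true that both Voronoi boundaries automatically fall in the gaps. With $a_2\in[\frac23,\frac79)$ and $a_3\in(\frac79,1)$ the midpoint $\frac12(a_2+a_3)$ can lie inside $J_2$ (e.g.\ $a_2=0.70$, $a_3=0.80$ give midpoint $0.75<\frac79$), so the Voronoi region of $a_3$ may swallow part of $J_2$, and then Proposition~\ref{prop0}(iii) does not identify $a_2=E(P(\cdot|J_2))$, $a_3=E(P(\cdot|J_{(2,\infty)}))$ without a further exclusion. (2) For $a_2$ in a gap, ``derive the centroid system and show it has no solution'' is not carried out, and it is not clear it can work as stated: the identities of Proposition~\ref{prop0}(iii) are only first-order (Lloyd stationarity) conditions, and for a gapped density a straddling cell can perfectly well have its centroid in a gap (a region consisting of $J_2$ together with a left piece of $J_3$ has centroid in $(\frac79,\frac89)$; a right piece of $J_1$ together with a left piece of $J_2$ has centroid in $(\frac13,\frac23)$). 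So ruling out such configurations by non-existence of solutions to the polynomial system is a substantial, unverified claim, and if a spurious stationary configuration does exist you would be forced back to a distortion comparison anyway. Moreover the system itself branches according to where the two midpoints land (inside $J_1$, inside $J_2$, inside $J_3$, or in a gap), so it is a case analysis, not one system.

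This is precisely where the paper's proof does its real work, by distortion comparisons rather than by first-order conditions. After fixing $a_1<\frac13$ (and hence $a_1\le\frac16$), it splits the range $a_2<\frac12$ into two subcases according to whether the Voronoi region of $a_2$ meets $J_{(1,\infty)}$, getting numerical contradictions with $V_3\le\frac{29}{5508}$ in both; for $a_2\ge\frac12$ it writes the exact two-point distortion of $\set{a_1,a_2}$ on $J_1$ as a cubic in $(a_1,a_2)$, shows it is minimized at $(a_1,a_2)=(\frac16,\frac12)$, whose Voronoi boundary is exactly $\frac13$, concluding that $P$-a.s.\ the region of $a_2$ misses $J_1$, hence $a_1=\frac16$ and $a_2\ge\frac23$; it then excludes $\frac79\le a_2$ by the comparison $\int_{J_1}(x-\frac16)^2dP+\int_{J_2}(x-\frac79)^2dP=\frac{11}{1944}>V_3$, and finally shows the region of $a_2$ cannot reach $J_{(2,\infty)}$ (that would force $a_3>1$), which pins $a_2=\frac{13}{18}$ and $a_3=\frac{17}{18}$. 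Until you replace ``show the polynomial system admits no solution'' (and the unproved claim that the second midpoint lies in a gap) by estimates of this kind, the lower-bound half of your argument is incomplete.
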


\begin{proof} Consider the set of three points $\gb:=\set{\frac 1 6, \frac{13}{18}, \frac{17}{18}}$. The distortion error due to the set $\gb$ is given by
\begin{equation} \label{eq45} \int_{J_1} (x-\frac{1}{6})^2 dP+\int_{J_2}(x-\frac{13}{18})^2 dP+\int_{J_{(2, \infty)}}  (x-\frac{17}{18})^2 dP=\frac{29}{5508}=0.00526507.\end{equation}
Since $V_3$ is the quantization error for three-means, we have $V_3\leq 0.00526507$. Let $\ga:=\set{a_1<a_2<a_3}$ be an optimal set of three-means. Since the optimal quantizers are the expected values of their own Voronoi regions we have $0<a_1<a_2<a_3<1$. If $\frac 13\leq a_1$, then
\[V_3\geq \int_{J_1}(x-\frac 13)^2 dP=\frac{1}{54}=0.0185185>V_3,\]
which leads to a contradiction. So, we can assume that $a_1< \frac 13$, and then the Voronoi region of $a_1$ does not contain any point from $J_{(1, \infty)}$. If it does, then we must have $\frac 12(a_1+a_2)>\frac 23$ implying $a_2>\frac 43-a_1\geq \frac 43-\frac 13=1$, which gives a contradiction. Thus, we see that $a_1\leq  E(X : X\in J_1)=\frac 16$.  Suppose that $a_2<\frac 12$. The following two cases can arise:

Case~1. Voronoi region of $a_2$ contains points from $J_{(1, \infty)}$.

Then, $\frac 12(a_2+a_3)>\frac 23$ implying $a_3>\frac 43-a_2\geq \frac 43-\frac 12=\frac 56$. First, assume that $\frac 56<a_3\leq \frac{31}{36}<J_3(0)$, and then
\[V_3\geq \int_{J_2} (x-\frac{5}{6})^2 dP+\sum _{n=3}^{\infty } \int_{J_{(2, \infty)}}(x-\frac{31}{36})^2 dP=\frac{481}{88128}=0.00545797>V_3,\]
which is a contradiction. Next, assume that $\frac {31}{36}\leq a_3$. Then, $\frac 12(\frac 23+\frac{31}{36})=\frac{55}{72}$. Also, notice that $E(X : X \in J_{(2, \infty)})=\frac {17}{18}$, and so,  we have
\begin{align*}
V_3&\geq \int_{[\frac 23, \frac{55}{72}]} (x-\frac{1}{2})^2 dP+\int_{[\frac{55}{72}, \frac 79]} (x-\frac{31}{36})^2 dP+\sum _{n=3}^{\infty } \int_{J_{(2, \infty)}}(x-\frac{17}{18})^2 dP\\
&=\frac{15431}{1410048}=0.0109436>V_3,
\end{align*}
which leads to a contradiction.

Case~2. Voronoi region of $a_2$ does not contain any point from $J_{(1, \infty)}$.

Then, as $E(X : X\in J_{(1, \infty)})=\frac 56$, we have
\[V_3\geq \int_{J_{(1, \infty)}}(x-\frac 56)^2 dP=\frac{25}{3672}=0.00680828>V_3,\]
which yields a contradiction.

Thus, by Case~1 and Case~2, we can assume that $\frac 12\leq a_2$. We now show that $P$-almost surely the Voronoi region of $a_2$ does not contain any point from $J_1$. For the sake of contradiction assume that the Voronoi region of $a_2$ contains points from $J_1$. Then, the distortion error contributed by $a_1$ and $a_2$ on the set $J_1$ is given by
\begin{align*}
&\int_{[0, \frac {a_1+a_2}{2}]}(x-a_1)^2 dP+\int_{[\frac{a_1+a_2}{2}, \frac 13]}(x-a_2)^2 dP=\frac{3 a_1^3}{8}+\frac{3}{8} a_2 a_1^2-\frac{3}{8} a_2^2 a_1+\frac{a_2^2}{2}-\frac{a_2}{6}-\frac{3 a_2^3}{8}+\frac{1}{54},
\end{align*}
which is minimum when $a_1=\frac 16$ and $a_2=\frac 12$. Then, notice that $\frac 12(a_1+a_2)=\frac 13$, i.e., $P$-almost surely the Voronoi region of $a_2$ does not contain any point from $J_1$. This implies the fact that $a_1=E(X : X \in J_1)=\frac 16$ and $\frac23 \leq  a_2$. Suppose that $\frac 79\leq  a_2$. Then,
\[V_3\geq \int_{J_1}(x-\frac 16)^2 dP+\int_{J_2}(x-\frac 79)^2 dP=\frac{11}{1944}=0.00565844>V_3,\]
which is a contradiction. So, we can assume that $\frac 23\leq  a_2< \frac 79$. Then, the Voronoi region of $a_2$ does not contain any point from $J_{(2, \infty)}$. If it does, then we must have $\frac 12(a_2+a_3)>\frac 89$ implying $a_3>\frac {16}9-a_2\geq \frac{16}{9}-\frac 79=1$, which yields a contradiction as $a_3<1$. Thus, we have $a_2=E(X : X \in J_2)=\frac {13}{18}$ and $a_3=E(X : X \in J_{(2, \infty)})=\frac {17}{18}$. Moreover, we have seen $a_1=\frac 16$. Then, by \eqref{eq45}, the quantization error is $V_3=\frac{29}{5508}$. This completes the proof of the lemma.
\end{proof}

\begin{prop} \label{prop212}
Let $n\geq 2$ and let $\ga_n$ be an optimal set of $n$-means. Then,

 $(i)$ $\ga_n\ii J_1\neq \es$ and  $\ga_n\ii [J_2(0), 1]\neq\es$;

 $(ii)$ $\ga_n$ does not contain any point from the open interval $(J_1(1), J_2(0)))$;

 $(iii)$ the Voronoi region of any point in $\ga_n\ii J_1$ does not contain any point from $[J_2(0), 1]$, and the Voronoi region of any point in $\ga_n\ii [J_2(0), 1]$ does not contain any point from $J_1$.
\end{prop}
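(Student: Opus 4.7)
The plan is to prove (i), then (iii), then (ii), with the universal upper bound $V_n \leq V_2 = \frac{7}{612}$ from Lemma~\ref{lemma21} (combined with monotonicity $V_{n+1} \leq V_n$) serving as the primary quantitative tool. Throughout I use that every optimal quantizer $a \in \ga_n$ lies in $[0,1]$, since $a$ is the centroid of $M(a|\ga_n) \ii \te{supp}(P) \ci [0,1]$.

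For (i), I would argue by contradiction. If $\ga_n \ii J_1 = \es$, then $\ga_n \ci (\frac 13, 1]$, so for each $x \in J_1$ and each $a \in \ga_n$, $(x-a)^2 \geq (x - \frac 13)^2$, giving
\[
V_n \geq \int_{J_1}\left(x - \tfrac 13\right)^{2} dP = \tfrac{1}{54} > \tfrac{7}{612} \geq V_n,
\]
a contradiction. The symmetric case $\ga_n \ii [J_2(0), 1] = \es$ uses the tail expectation $E(P(\cdot|J_{(1,\infty)})) = \frac 56$ from Lemma~\ref{lemma2} and the variance computation in Proposition~\ref{prop1} with $k=1$ to get
\[
V_n \geq \int_{J_{(1,\infty)}}\left(x - \tfrac 23\right)^2 dP = \tfrac{25}{3672} + \tfrac{1}{72} = \tfrac{19}{918} > \tfrac{7}{612} \geq V_n,
\]
again a contradiction.

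For (iii), I would argue directly using (i). Pick $a'' \in \ga_n \ii [\frac 23, 1]$ (existing by (i)); for any $x \in [\frac 23, 1]$ we have $|x - a''| \leq \frac 13$ (both points lie in an interval of length $\frac 13$), while any $a' \in \ga_n \ii J_1 \ci [0, \frac 13]$ satisfies $|x - a'| = x - a' \geq \frac 13$. So $a''$ is at least as close to $x$ as $a'$, with strict inequality except on the $P$-null boundary set $\{x = \frac 23\}$. Hence $P(M(a' | \ga_n) \ii [\frac 23, 1]) = 0$. The reverse statement, that Voronoi regions of quantizers in $[\frac 23, 1]$ meet $J_1$ only in a $P$-null set, is obtained symmetrically.

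For (ii), which I anticipate as the main obstacle, I would argue by contradiction. Suppose $a \in \ga_n \ii (\frac 13, \frac 23)$. Since $a = E[X \mid X \in M(a|\ga_n)]$ and the gap carries no $P$-mass, $M(a|\ga_n) \ii \te{supp}(P)$ must meet both $J_1$ and $J_{(1,\infty)}$ with positive mass, for otherwise the centroid would be trapped in $[0, \frac 13]$ or $[\frac 23, 1]$. Writing $M(a|\ga_n) = [b_1, b_2]$ with $L = [b_1, \frac 13]$, $R = [\frac 23, b_2]$, masses $p, q > 0$, and conditional means $\mu_L, \mu_R$, the condition $b_1 \leq \frac 13 \leq \frac 23 \leq b_2$ forces the immediate left and right neighbors $a', a''$ to satisfy $a' \leq \frac 23 - a < \frac 13$ and $a'' \geq \frac 43 - a > \frac 23$; thus $a' \in J_1$ and $a'' \in [\frac 23, 1]$. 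A variance decomposition combined with the centroid relation $a = (p\mu_L + q\mu_R)/(p+q)$ yields
\[
\int_{M(a|\ga_n)} (x - a)^2\, dP = V_L + V_R + \frac{pq(\mu_R - \mu_L)^2}{p+q},
\]
where the last ``inter-piece'' term is strictly positive since $\mu_R - \mu_L \geq \frac 13$. To reach a contradiction I would construct a competitor $\ga_n' = (\ga_n \setminus \{a\}) \uu \{\mu_L\}$ (or symmetrically with $\mu_R$). By assigning $x \in L$ to $\mu_L$ and $x \in R$ to $a''$, the new error is bounded above so that the change satisfies
\[
V(\ga_n') - V(\ga_n) \leq q(\mu_R - a'')^2 - \frac{pq(\mu_R - \mu_L)^2}{p+q},
\]
with an analogous bound $p(\mu_L - a')^2 - \frac{pq(\mu_R - \mu_L)^2}{p+q}$ for the symmetric swap. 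The crux is to show that at least one of these is strictly negative; this reduces to the inequality $(\mu_R - a'')^2 + (\mu_L - a')^2 < (\mu_R - \mu_L)^2$. I would establish this by using the optimality conditions $a' = E[X|M(a'|\ga_n)]$ and $a'' = E[X|M(a''|\ga_n)]$ to confine $a', a''$ close to $\mu_L, \mu_R$ within single pieces of length $\frac 13$, while $\mu_R - \mu_L$ straddles the gap of length $\frac 13$. The delicate point will be handling corner cases in which $a'$ or $a''$ are pulled far from $\mu_L$ or $\mu_R$ by further neighbors; these are addressed via a two-point replacement of $\{a, a''\}$ or $\{a, a'\}$ instead of a single-point swap.
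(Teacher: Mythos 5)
Your proofs of (i) and (iii) are sound: (i) is essentially the paper's argument, and your weaker bound $V_n\le V_2=\frac{7}{612}$ still beats $\frac1{54}$ and $\frac{19}{918}$; your direct two-point comparison for (iii) is a clean alternative to the paper's midpoint computation and needs only (i). The genuine gap is in (ii), which is the substance of the proposition. Your swap framework is set up correctly — the within/between variance decomposition is right, and it is true that if neither one-point swap strictly lowers the distortion then $(\mu_R-a'')^2+(\mu_L-a')^2\ge(\mu_R-\mu_L)^2$ — but the strict reverse inequality, on which the whole contradiction rests, is nowhere proved. The positional facts you actually establish ($a'\le\frac23-a$, $a''\ge\frac43-a$, $a'\in J_1$, $a''\in[\frac23,1]$, $\mu_R-\mu_L>\frac13$) give only $|\mu_L-a'|\le\frac13$ and $|\mu_R-a''|\le\frac13$, which is consistent with the inequality failing, since the squares add. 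Worse, near the ends of the gap these constraints work against you: as $a\downarrow\frac13$ you force $a''\ge\frac43-a$ up toward $1$, while $b_2=\frac12(a+a'')\le\frac23+\frac12(a-\frac13)$ pins $\mu_R$ near $\frac23$; if in addition $b_1$ is near $\frac13$, then $(\mu_R-a'')^2$ is already essentially equal to $(\mu_R-\mu_L)^2$, and your constraints still allow $a'$ anywhere in $[0,\frac23-a]$, hence allow $(\mu_L-a')^2$ up to about $\frac19$. Excluding such configurations requires exactly the further optimality input (centroid conditions on $a'$, $a''$, or the ``two-point replacement'' of $\{a,a''\}$ or $\{a,a'\}$) that you defer and never specify. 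As written, the crux of (ii) is a plan, not a proof.

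For comparison, the paper settles (ii) without any swap: setting $j=\max\{i:a_i\le\frac13\}$, if $\frac13<a_{j+1}\le\frac12$ then, since the gap carries no mass, the centroid condition forces the Voronoi region of $a_{j+1}$ to reach past $\frac23$, whence $a_{j+2}>\frac56$ and $V_n\ge\int_{J_2}(x-\frac56)^2\,dP=\frac{13}{3888}$; if $\frac12\le a_{j+1}<\frac23$ it must reach into $J_1$, whence $a_j<\frac16$ and $V_n\ge\int_{[\frac16,\frac13]}(x-\frac16)^2\,dP=\frac1{432}$. Both lower bounds exceed the test bound $V_n\le V_4\le\frac{79}{44064}$ used there for $n\ge4$, with $n=2,3$ read off from Lemma~\ref{lemma21} and Lemma~\ref{lemma22}. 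Note that your global bound $\frac7{612}$ is too weak for this route, since $\frac{13}{3888}$ and $\frac1{432}$ are both smaller than $\frac7{612}$; so if you repair (ii) along the paper's lines you must also import the four-point test set $\{\frac1{12},\frac14,\frac{13}{18},\frac{17}{18}\}$ and treat $n=2,3$ separately.
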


\begin{proof}
By Lemma~\ref{lemma21} and Lemma~\ref{lemma22}, the proposition is true for $n=2, 3$. We now show that the proposition is true for all $n\geq 4$. Consider the set of four points $\gb:=\set{\frac 1{12}, \frac 14, \frac {13}{18}, \frac{17}{18}}$. The distortion error due to the set $\gb$ is given by
\begin{align*}
&\int\min_{a\in \gb}(x-a)^2 dP\\
&=\int_{[1, \frac 16]}(x-\frac 1{12})^2 dP+\int_{[\frac 16,\frac 13]}(x-\frac 14)^2 dP+\int_{J_2}(x-\frac {13}{18})^2 dP+\sum_{j=3}^\infty \int_{J_j}(x-\frac{17}{18})^2 dP=\frac{79}{44064}.
\end{align*}
Since $V_n$ is the quantization error for $n$-means with $n\geq 4$, we have $V_n\leq V_4\leq \frac{79}{44064}=0.00179285$. Let $\ga_n:=\set {0<a_1<a_2<\cdots<a_n<1}$ be an optimal set of $n$-means. If $\frac 13<a_1$, then
$V_n\geq \int_{J_1}(x-\frac 13)^2 dP=\frac 1{54}=0.0185185>V_n,$
which is a contradiction. If $a_n<J_2(0)=\frac 23$, then
\[V_n\geq \sum_{j=2}^\infty \int_{J_j}(x-\frac 23)^2 dP=\frac{19}{918}=0.0206972>V_n,\]
which leads to another contradiction. Thus, $\ga_n\ii J_1\neq \es$ and $\ga_n\ii [J_2(0), 1]\neq\es$, which completes the proof of $(i)$.

To prove $(ii)$ and $(iii)$, let $j:=\max\set{i : a_i\leq \frac 13}$. Then, $a_j\leq \frac 13$. We need to show that $\frac 23\leq a_{j+1}$. For the sake of contradiction, assume that $\frac 13<a_{j+1}<\frac 23$. If $\frac 13<a_{j+1}\leq \frac 12$, then $\frac 12(a_{j+1}+a_{j+2})>\frac 23$ implying $a_{j+2}>\frac 43-a_{j+1}\geq \frac 43-\frac 12=\frac 56>\frac 79$ and so,
$V_n\geq \int_{J_2}(x-\frac 56)^2 dP=\frac{13}{3888}=0.00334362>V_n,$
which yields a contradiction. Next, suppose that $\frac 12\leq a_{j+1}<\frac 23$. Then, $\frac 12(a_{j}+a_{j+1})<\frac 13$ implying $a_j<\frac 23 -a_{j+1}\leq \frac 23-\frac 12=\frac 16$, and so,
$V_n\geq \int_{[\frac 16, \frac 13]}(x-\frac 1 6)^2 dP=\frac{1}{432}=0.00231481>V_n,$
which gives a contradiction. So, we can assume that $a_j\leq \frac 13 <\frac 23\leq a_{j+1}$, i.e., $\ga_n$ does not contain any point from the open interval $(J_1(1), J_2(0))$, which yields $(ii)$.

If the Voronoi region of $a_j$ contains points from $[J_2(0), 1]$, we must have $\frac 12(a_j+a_{j+1})>\frac 23$ implying $a_{j+1}\geq \frac 43-a_j=\frac 43-\frac 13=1$, which is a contradiction. Similarly, if the Voronoi region of any point in $\ga_n\ii [J_2(0), 1]$ contains points from $J_1$, we will arrive at a contradiction. Thus, $(iii)$ is yielded, and this completes the proof of the proposition.
\end{proof}

\begin{prop} \label{prop23}
Let $\ga_n$ be an optimal set of $n$-means for $n\geq 4$. Then, $\te{card}(\ga_n\ii J_1)\geq 2$ and $\te{card}(\ga_n\ii [J_2(0), 1])\geq 2$.
\end{prop}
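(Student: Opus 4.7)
The plan is to argue each of the two cardinality assertions by contradiction, playing the upper bound $V_n\le V_4\le \frac{79}{44064}$ already established in the proof of Proposition~\ref{prop212} against the one-mean lower bounds supplied by Proposition~\ref{prop1}.

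First I would invoke Proposition~\ref{prop212}$(ii)$ and $(iii)$ to separate the two quantization problems. These say no optimal quantizer lies in the open gap $(J_1(1), J_2(0))$ and that no Voronoi region straddles it, so up to a $P$-null set the points in $\ga_n\ii J_1$ alone quantize $J_1$ and the points in $\ga_n\ii [J_2(0),1]=\ga_n\ii J_{(1,\infty)}$ alone quantize $J_{(1,\infty)}$. Consequently, if $\ga_n\ii J_1=\set{a}$ consists of a single point, then the contribution of $J_1$ to $V_n$ is exactly $\int_{J_1}(x-a)^2\,dP$, which is bounded below by the optimal one-mean error $V(P,\ga_1(P(\cdot|J_1)),J_1)$ simply because an optimal one-mean minimizes this integral. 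An analogous statement holds for $J_{(1,\infty)}$.

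Now suppose for contradiction that $\te{card}(\ga_n\ii J_1)=1$. By Proposition~\ref{prop1} with $k=n=1$, the contribution of $J_1$ to $V_n$ is at least
\[
V(P,\ga_1(P(\cdot|J_1)),J_1)=\frac 1{12}\cdot\frac 1{18}=\frac{1}{216}=\frac{204}{44064},
\]
which already exceeds $\frac{79}{44064}$, contradicting $V_n\le V_4\le\frac{79}{44064}$. Similarly, if $\te{card}(\ga_n\ii [J_2(0),1])=1$, then by Proposition~\ref{prop1} the contribution of $J_{(1,\infty)}$ to $V_n$ is at least
\[
V(P,\ga_1(P(\cdot|J_{(1,\infty)})),J_{(1,\infty)})=\frac{25}{204}\cdot\frac 1{18}=\frac{25}{3672}=\frac{300}{44064},
\]
again exceeding $\frac{79}{44064}$, a contradiction. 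This forces both cardinalities to be at least $2$.

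There is no real obstacle here; the argument reduces to arithmetic once Proposition~\ref{prop212} has decoupled the two regions, and the only subtlety to state carefully is that the ``one point'' case genuinely reduces to a one-mean quantization problem on the corresponding region, which is where the separation of Voronoi regions across $(J_1(1),J_2(0))$ from Proposition~\ref{prop212}$(iii)$ is indispensable.
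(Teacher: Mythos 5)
Your argument is correct and is essentially the paper's own proof: both use the bound $V_n\le V_4\le \frac{79}{44064}$ from the proof of Proposition~\ref{prop212}, decouple $J_1$ from $[J_2(0),1]$ via Proposition~\ref{prop212}, and derive a contradiction from the one-mean lower bounds $\frac{1}{216}$ and $\frac{25}{3672}$ (the paper writes these as integrals against the conditional means $\frac 16$ and $\frac 56$, which are exactly the values you quote from Proposition~\ref{prop1}). Your explicit appeal to Proposition~\ref{prop212}$(iii)$ to justify that a single point genuinely faces a one-mean problem is a point the paper leaves implicit, but it is the same argument.
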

\begin{proof} As shown in the proof of Proposition~\ref{prop212}, since $V_n$ is the quantization error for $n$-means for $n\geq 4$, we have $V_n\leq V_4\leq \frac{79}{44064}=0.00179285$. By Proposition~\ref{prop212}, we have $\te{card}(\ga_n\ii J_1)\geq 1$ and $\te{card}(\ga_n\ii [J_2(0), 1])\geq 1$. First, we show that $\te{card}(\ga_n\ii [J_2(0), 1])\geq 2$. Suppose that $\te{card}(\ga_n\ii [J_2(0), 1])=1$. Then, as $E(P(\cdot|J_{(1, \infty)}))=\frac 56$, we have
\[V_n\geq \int_{J_{(1, \infty)}}(x-\frac 56)^2 dP=\frac{25}{3672}=0.00680828>V_n,\]
which leads to a contradiction. So, we can assume that $\te{card}(\ga_n\ii [S_2(0), 1])\geq 2$. Next, suppose that $\te{card}(\ga_n\ii J_1)=1$. Then, as $E(P(\cdot|J_1))=\frac 16$, we have
\[V_n\geq \int_{J_1}(x-\frac 16)^2 dP=\frac{1}{216}=0.00462963>V_n,\]
which leads to another contradiction. Thus, the proof of the proposition is complete.
\end{proof}

\begin{remark}\label{rem1}
From Proposition~\ref{prop23}, it follows that if $\ga_n$ is an optimal set of four-means, then  $\te{card}(\ga_n\ii J_1)= 2$ and $\te{card}(\ga_n\ii [J_2(0), 1])= 2$.
\end{remark}

\begin{prop} \label{prop214} Let $\ga_n$ be an optimal set of $n$-means for $P$ such that $\te{card}(\ga_n\ii [J_{k+1}(0), 1])\geq 2$ for some $k\in \D N$ and $n\in \D N$. Then,

$(i)$  $\ga_n\ii J_{k+1}\neq \es$ and $\ga_n\ii [J_{k+2}(0), 1]\neq \es$;

$(ii)$ $\ga_n$ does not contain any point from the open interval $(J_{k+1}(1),  J_{k+2}(0))$;

$(iii)$ the Voronoi region of any point in $\ga_n\ii J_{k+1}$ does not contain any point from $[J_{k+2}(0), 1]$ and the Voronoi region of any point in $\ga_n\ii [J_{k+2}(0), 1]$ does not contain any point from $J_{k+1}$.
\end{prop}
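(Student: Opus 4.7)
The plan is to prove the proposition by induction on $k\in\D N$, exploiting the self-similarity of $P$ via the affine similarity $T_k:\D R\to\D R$ given by $T_k(x)=1-3^{-k}+3^{-k}x$. This $T_k$ is a bijection $[0,1]\to[J_{k+1}(0),1]$ carrying each $J_j$ onto $J_{k+j}$, and a direct check using the densities $(3/2)^j$ shows that $T_k$ pushes $P$ forward to the conditional measure $P(\cdot\mid J_{(k,\infty)})$; consequently, quadratic distortion errors on corresponding Borel sets scale by the factor $1/18^k$. In the base case $k=1$, Proposition~\ref{prop212} itself plays the role of what would be the ``$k=0$'' instance of the statement we are trying to prove.

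Enumerate $\ga_n\ii[J_{k+1}(0),1]$ as $b_1<\cdots<b_m$ with $m\geq 2$, and let $a_\ell$ denote the largest point of $\ga_n$ strictly less than $J_{k+1}(0)$; such a point exists because Proposition~\ref{prop212} already forces $\ga_n\ii J_1\neq\es$. For $k=1$, Proposition~\ref{prop212} supplies $a_\ell\in J_1$ together with the facts that the Voronoi region of $a_\ell$ in $\ga_n$ contains no point of $[J_2(0),1]$ and that of $b_1$ contains no point of $J_1$. For $k\geq 2$, the hypothesis $m\geq 2$ forces $\te{card}(\ga_n\ii[J_k(0),1])\geq 2$, so the induction hypothesis (Proposition~\ref{prop214} at level $k-1$) yields the analogous conclusions one level down: $a_\ell\in J_k$, and the Voronoi regions in question do not cross the gap between $J_k$ and $[J_{k+1}(0),1]$. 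Since Voronoi regions in $\D R$ are closed intervals, these facts together force
\[\frac{a_\ell+b_1}{2}\in\bigl(J_k(1),J_{k+1}(0)\bigr],\]
an interval of $P$-measure zero.

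More generally, once $a_\ell\in J_k$ is known, the same calculation shows that for any $m$-subset $\set{b_1^\ast,\ldots,b_m^\ast}\ci[J_{k+1}(0),1]$ one has $(a_\ell+b_1^\ast)/2\in[J_k(1),J_{k+1}(0)]$, still a $P$-null interval. Thus replacing $\set{b_1,\ldots,b_m}$ by any such subset leaves the contribution of $a_1,\ldots,a_\ell$ to $V_n$ unchanged, and the global optimality of $\ga_n$ forces $\set{b_1,\ldots,b_m}$ itself to be an optimal $m$-means for $P(\cdot\mid J_{(k,\infty)})$. Pulling back through $T_k^{-1}$ and using the self-similarity above, the set $\set{c_j:=T_k^{-1}(b_j):1\leq j\leq m}$ is then an optimal $m$-means for $P$ itself. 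Applying Proposition~\ref{prop212} to $\set{c_1,\ldots,c_m}$ (valid since $m\geq 2$) yields $c_1\in J_1$, $c_m\in[J_2(0),1]$, no $c_j$ in the open gap $(J_1(1),J_2(0))$, and the Voronoi decoupling at that gap. Pushing these statements forward through $T_k$ produces $b_1\in J_{k+1}$, $b_m\in[J_{k+2}(0),1]$, and no $b_j\in(J_{k+1}(1),J_{k+2}(0))$; combined with $a_\ell\leq J_k(1)<J_{k+1}(1)$ this establishes $(i)$ and $(ii)$, and because the Voronoi regions of the $b_j$ in $\ga_n$ coincide on $[J_{k+1}(0),1]$ with those in $\set{b_1,\ldots,b_m}$ alone, the decoupling lifts to $\ga_n$, giving $(iii)$.

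The main obstacle is the decoupling step: one must know a priori that the midpoint $(a_\ell+b_1)/2$ lives in the $P$-null gap between $J_k$ and $J_{k+1}$ so that the self-similar reduction to Proposition~\ref{prop212} becomes legitimate. This is precisely what the induction hypothesis (or Proposition~\ref{prop212} in the base case) supplies, which is the reason the argument must be organized by induction on $k$; once the decoupling is in hand, everything else is a mechanical transfer of the level-$1$ statement through the similarity $T_k$.
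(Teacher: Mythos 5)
Your proof is correct, but it proceeds quite differently from the paper. The paper fixes $k=1$ and argues by direct distortion comparisons: it bounds $V(P,\ga_n\ii[J_2(0),1])$ by the error of an explicit candidate set and then rules out, case by case on $\te{card}(\ga_n\ii[J_2(0),1])$ and on the possible locations of the quantizers, every configuration violating $(i)$--$(iii)$, each time by a numerical estimate; the extension to $k\geq 2$ is then only asserted to follow ``inductively.'' You instead exploit the exact self-similarity $T_k(x)=1-3^{-k}+3^{-k}x$, which does push $P$ forward to $P(\cdot\mid J_{(k,\infty)})$ with distortion scaling $1/18^k$, together with the clean geometric fact that for $a_\ell\in J_k$ and $b^\ast\in[J_{k+1}(0),1]$ the midpoint lies in $[J_k(1),J_{k+1}(0)]$, a $P$-null interval (the three intervals $J_k$, the gap, and $[J_{k+1}(0),1]$ all have length $3^{-k}$, so this is exact). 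That decoupling justifies the replacement argument showing $\ga_n\ii[J_{k+1}(0),1]$ is itself an optimal $m$-means for $P(\cdot\mid J_{(k,\infty)})$, and pulling back through $T_k^{-1}$ reduces everything to Proposition~\ref{prop212}; the lift of $(iii)$ works because Voronoi regions only shrink when points are added. Your route buys two things the paper's does not: it eliminates the numerical case analysis entirely, and it makes the $k\geq 2$ induction explicit rather than implicit (the induction hypothesis at level $k-1$ is exactly what places $a_\ell$ in $J_k$). One small point you leave tacit: your exchange argument shows optimality of $\set{b_1,\ldots,b_m}$ only among $m$-point sets contained in $[J_{k+1}(0),1]$; to conclude it is optimal for $P(\cdot\mid J_{(k,\infty)})$ outright you should note (via Proposition~\ref{prop0}$(iii)$, or by clamping competitors to the interval, which cannot increase the error) that some optimal set for that conditional measure lies in $[J_{k+1}(0),1]$. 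This is routine and does not affect the validity of the argument.
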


\begin{proof} To prove the proposition it is enough to prove it for $k=1$, and then inductively the proposition will follow for all $k\geq 2$. Fix $k=1$.
Suppose that $\te{card}(\ga_n \ii [J_2(0), 1])\geq 2$. By Lemma~\ref{lemma22}, it is clear that the proposition is true for $n=3$. We now prove  that the proposition is true for $n\geq 4$.
Let $\ga_n:=\set{0<a_1<a_2<\cdots<a_n<1}$ be an optimal set of $n$-means for any $n\geq 4$. Let $V(P, \ga_n\ii [J_2(0),1])$ be the quantization error contributed by the set $\ga_n\ii [J_2(0),1]$ in the region $[J_2(0),1]$. Let $\gb$ be a set such that $\gb:=\set{\frac 1{12}, \frac 14, \frac {13}{18}, \frac{17}{18}}$. The distortion error due to the set $\gb\ii [J_2(0),1]:=\set{\frac {13}{18}, \frac{17}{18}}$ is given by
\[\int_{[J_2(0),1]}\min_{a \in \gb\ii [J_2(0),1]}(x-a)^2 dP=\int_{J_2}(x-\frac {13}{18})^2dP+\int_{J_{(2, \infty)}}(x-\frac{17}{18})^2dP=\frac{7}{11016}=0.000635439,\]
and so,  $V(P, \ga_n\ii [J_2(0),1])\leq 0.000635439$.
Suppose that $\ga_n$ does not contain any point from $J_2$. Since by Proposition~\ref{prop212}, the Voronoi region of any point in $\ga_n\ii J_1$ does not contain any point from $[J_2(0),1]$, we have
 \[V(P, \ga_n\ii [J_2(0),1])\geq \int_{J_2}(x-\frac{7}{9})^2 dP=\frac{1}{972}=0.00102881>V(P, \ga_n\ii [J_2(0),1]),\]
 which leads to a contradiction. So, we can assume that $\ga_n\ii J_2\neq \es$. Suppose that $\ga_n\ii [J_{3}(0), 1]= \es$.
Then, $a_n<J_3(0)=\frac 89$, and so,
\[V(P, \ga_n\ii [J_2(0),1])\geq  \sum_{j=3}^\infty \int_{J_j}(x-J_3(0))^2 dP=\frac{19}{16524}=0.00114984>V(P, \ga_n\ii [J_2(0),1]),\]
which gives another contradiction. Therefore, $\ga_n\ii [J_{3}(0), 1]\neq \es$, i.e., $(i)$ is proved.

To prove $(ii)$ we proceed as follows: If $\te{card}(\ga_n \ii [J_2(0), 1])= 2$, then as Lemma~\ref{lemma21}, it can be proved that $\ga_n \ii [J_2(0), 1]=\set{E(P(\cdot|J_2)), E(P(\cdot|J_{(2, \infty)}))}=\set{\frac{13}{18}, \frac{17}{18}}$. Since $\frac{13}{18}\in J_2$ and $J_3(1)=\frac{8}{9}<\frac {17}{18}$, in this case we see that $\ga_n\ii (J_2(1), J_3(0))=\es$. If $\te{card}(\ga_n \ii [J_2(0), 1])=3$, then as Lemma~\ref{lemma22}, it can be proved that
\[\ga_n \ii [J_2(0), 1]=\set{E(P(\cdot|J_2)),  E(P(\cdot|J_3)), E(P(\cdot|J_{(3, \infty)}))}=\set{\frac{13}{18}, \frac{49}{54}, \frac{53}{54}}\] implying the fact that $\ga_n\ii (J_2(1), J_3(0))=\es$. We now assume that $\te{card}(\ga_n \ii [J_2(0), 1])=4$, then as mentioned in Remark~\ref{rem1}, in this case, we can also prove that $\te{card}(\ga_n \ii J_2)=2$ and $\te{card}(\ga_n \ii [J_3(0), 1])=2$, in fact, we have $\te{card}(\ga_n \ii [J_2(0), 1])=\set{\frac{25}{36}, \frac 34, \frac{49}{54}, \frac{53}{54}}$ implying $\ga_n\ii (J_2(1), J_3(0))=\es$, and the corresponding quantization error, by Proposition~\ref{prop1}, is given by
\[V(P, \ga_2(P(\cdot|J_2)), J_2)+V(P, \ga_1(P(\cdot|J_3)), J_3)+V(P, \set{E(P(\cdot|J_{(3, \infty)}))}, J_{(3, \infty)})=\frac{79}{793152}.\]
Next, assume that $\te{card}(\ga_n \ii [J_2(0), 1])\geq 4$. Then, we must have $V(P, \ga_n\ii [J_2(0),1])\leq \frac{79}{793152}=0.0000996026$.
Let $j:=\max\set{i : a_i \leq J_2(1) \te{ for } 1\leq i\leq n}$ implying $a_j\leq \frac{7}{9}=J_2(1)$. Suppose that $\frac 79<a_{j+1}<\frac 89$. The following cases can arise:

Case~1.  $\frac 79<a_{j+1}<\frac 56$.

Then, $\frac 12(a_{j+1}+a_{j+2})>\frac 89$ implying $a_{j+2}>\frac {16}{9}-a_{j+1}\geq \frac {16}{9}-\frac {5}{6}=\frac{17}{18}>J_3(1)$, and so,
\[V(P, \ga_n\ii [J_2(0),1]) \geq \int_{J_3}(x-\frac{17}{18})^2 dP=\frac{13}{69984}=0.000185757>V(P, \ga_n\ii [J_2(0),1]),\]
which is contradiction.

Case~2. $\frac{5}{6} \leq a_{j+1}< \frac 89.$

Then, $\frac 12(a_{j}+a_{j+1})<\frac 79$ implying $a_{j}<\frac {14}9-a_{j+1}\leq \frac {14}9-\frac{5}{6}=\frac{13}{18}$, and so,
\[V(P, \ga_n\ii [J_2(0),1]) \geq \int_{[\frac {13}{18}, \frac 79]}(x-\frac{13}{18})^2 dP=\frac{1}{7776}=0.000128601>V(P, \ga_n\ii [J_2(0),1]),\]
which gives a contradiction.

Thus, $\ga_n\ii (J_2(1), J_3(0))=\es$, which completes the proof of $(ii)$.  The proof of $(iii)$ is similar to the proof of $(iii)$ in Proposition~\ref{prop212}.
Hence, the proposition is yielded.
\end{proof}

\begin{prop}\label{prop215}
Let $\ga_n$ be an optimal set of $n$-means for $n\geq 2$. Then, there exists a positive integer $k:=k(n)$ such that  $\ga_n\ii J_j \neq \es$ for all $1\leq j\leq k$, and $\te{card}(\ga_n\ii [J_{k+1}(0), 1])=1$. Write $\ga_{n,j}:=\ga_n\ii J_j$ and $n_j:=\te{card}(\ga_{n,j})$. Then, $\ga_{n, j}=\ga_{n_j}(P(\cdot|J_j))$ and $n=\sum_{j=1}^k n_j+1$, with
\[V_n=
\mathop\sum\limits_{j=1}^k V(P, \ga_{n_j}(P(\cdot|J_j)), J_j)+V(P, \ga_1(P(\cdot|J_{(k, \infty)})), J_{(k, \infty)})=\sum_{j=1}^k\frac 1{n_j^2}\frac 1{12} \frac{1}{18^j}+\frac{25}{204}\frac 1{18^k}.\]
\end{prop}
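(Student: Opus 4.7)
The plan is to iteratively invoke Proposition~\ref{prop212} and Proposition~\ref{prop214} to peel off one piece $J_j$ at a time until the tail contains exactly one point. Proposition~\ref{prop212} supplies the base case: $\ga_n\ii J_1\neq\es$ and $\ga_n\ii[J_2(0),1]\neq\es$. If $\te{card}(\ga_n\ii[J_2(0),1])=1$, set $k=1$; otherwise $\te{card}(\ga_n\ii[J_2(0),1])\geq 2$ and Proposition~\ref{prop214} with index $1$ forces $\ga_n\ii J_2\neq\es$ and $\ga_n\ii[J_3(0),1]\neq\es$. Continuing recursively, at step $j$ the hypothesis $\te{card}(\ga_n\ii[J_{j+1}(0),1])\geq 2$ lets us apply Proposition~\ref{prop214} with index $j$ to obtain $\ga_n\ii J_{j+1}\neq\es$ together with $\ga_n\ii[J_{j+2}(0),1]\neq\es$. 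Since $\ga_n$ has only finitely many points, this recursion must terminate at some $k=k(n)\geq 1$ with $\te{card}(\ga_n\ii[J_{k+1}(0),1])=1$ and $\ga_n\ii J_j\neq\es$ for every $1\leq j\leq k$. Termination never yields cardinality $0$ in the tail, because each step was triggered by having at least two points there, which Proposition~\ref{prop214}$(i)$ then redistributes into a nonempty successor tail.

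Next I would show that the Voronoi diagram of $\ga_n$ splits cleanly along $J_1,\ldots,J_k,J_{(k,\infty)}$. Combining parts $(ii)$ and $(iii)$ of Proposition~\ref{prop212} with iterated applications of Proposition~\ref{prop214}$(ii)(iii)$, one sees that $P$-almost surely the Voronoi region of any point of $\ga_{n,j}$ lies inside $J_j$ for every $1\leq j\leq k$, and the Voronoi region of the unique point of $\ga_n\ii[J_{k+1}(0),1]$ lies inside $J_{(k,\infty)}$. This decouples the distortion into
\[V_n=\sum_{j=1}^k V(P,\ga_{n,j},J_j)+V(P,\ga_n\ii[J_{k+1}(0),1],J_{(k,\infty)}).\]
Proposition~\ref{prop0}$(iii)$ forces the lone tail point to equal $E(P(\cdot|J_{(k,\infty)}))=1-\tfrac 12\tfrac 1{3^k}$ (Lemma~\ref{lemma2}), so by Proposition~\ref{prop1} the tail term equals $\tfrac{25}{204}\tfrac 1{18^k}$.

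For the piecewise optimality I would use a standard swap argument. If $\ga_{n,j}$ failed to be an optimal $n_j$-means set for $P(\cdot|J_j)$, replacing the $J_j$-block of $\ga_n$ by $\ga_{n_j}(P(\cdot|J_j))$ would keep $|\ga_n|=n$ unchanged and, in view of the Voronoi decoupling, strictly lower the $j$th summand of the decomposition without affecting the others, contradicting optimality of $\ga_n$. Hence $\ga_{n,j}=\ga_{n_j}(P(\cdot|J_j))$, and Proposition~\ref{prop1} gives $V(P,\ga_{n,j},J_j)=\tfrac 1{n_j^2}\tfrac 1{12}\tfrac 1{18^j}$. The count $n=\sum_{j=1}^k n_j+1$ follows from the disjoint decomposition of $\ga_n$ across $J_1,\ldots,J_k$ and $[J_{k+1}(0),1]$, using Proposition~\ref{prop212}$(ii)$ and Proposition~\ref{prop214}$(ii)$ to rule out points in the gaps. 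Summing the per-piece errors with the tail term then yields the claimed closed form for $V_n$.

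The principal obstacle is not any individual step but the bookkeeping of the peeling recursion: one must verify that the ``card $\geq 2$'' hypothesis needed to invoke Proposition~\ref{prop214} correctly propagates from step $j$ to step $j+1$ until the recursion stops with exactly one point left in the tail. Once this is handled, the Voronoi decoupling, the swap argument, and the closed-form evaluation are all essentially routine consequences of Proposition~\ref{prop1} and Lemma~\ref{lemma2}.
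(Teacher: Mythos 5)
Your proposal is correct and follows essentially the same route as the paper: induction on the tail via Proposition~\ref{prop212} (base) and Proposition~\ref{prop214} (step) until exactly one point remains in $[J_{k+1}(0),1]$, then Voronoi decoupling to identify each $\ga_{n,j}$ with $\ga_{n_j}(P(\cdot|J_j))$ and the lone tail point with $E(P(\cdot|J_{(k,\infty)}))$, followed by the closed-form evaluation via Proposition~\ref{prop1}. Your explicit termination bookkeeping and swap argument merely spell out steps the paper leaves implicit.
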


\begin{proof} Proposition~\ref{prop212} says that if $\ga_n$ is an optimal set of $n$-means for $n\geq 2$, then $\ga_n\ii J_1\neq \es$, $\ga_n\ii [J_2(0), 1]\neq \es$, and $\ga_n$ does not contain any point from the open interval $(J_1(1), J_2(0))$. Proposition~\ref{prop214} says that if $\te{card}(\ga_n\ii [J_{k+1}(0), 1])\geq 2$ for some $k\in \D N$, then $\ga_n\ii J_{k+1}\neq \es$ and $\ga_n\ii [J_{k+2}(0), 1]\neq \es$. Moreover, $\ga_n$ does not take any point from the open interval $(J_{k+1}(1), J_{k+2}(0))$. Thus, by Induction Principle, we can say that if $\ga_n$ is an optimal set of $n$-means for $n\geq 2$, then there exists a positive integer $k$ such that $\ga_n\ii J_j\neq \es$ for all $1\leq j\leq k$ and $\te{card}(\ga_n\ii [J_{k+1}(0), 1])=1$.

For a given $n\geq 2$, write $\ga_{n,j}:=\ga_n\ii J_j$ and $n_j:=\te{card}(\ga_{n,j})$. Since the Voronoi region of any point in $\ga_{n,j}$ does not contain any point from $J_1, J_2, \cdots, J_{j-1}$, and $J_{(j, \infty)}$, we must have $\ga_{n,j}=\ga_{n_j}(P(\cdot|J_j))$. Again, $\ga_{n,j}$ are disjoint for $1\leq j\leq k$ and $\ga_n$ does not contain any point from the open intervals $(J_{\ell}(1), J_{\ell+1}(0))$ for $1\leq \ell\leq k$. This implies the fact that
$\ga_n=\mathop{\uu}\limits_{j=1}^k\ga_{n,j}\uu \set{\ga_1(P(\cdot|J_{(k, \infty)}))}$ and $n=n_1+n_2+\cdots +n_k+1$, and so,
\begin{align*} &V_n=\int \min_{a \in \ga_{n}} (x-a)^2 dP=\sum_{j=1}^{k} \int_{J_j}\min_{a \in \ga_{n,j}} (x-a)^2 dP+\int_{J_{(k, \infty)}}(x-\ga_1(P(\cdot|J_{(k, \infty)})))^2 dP\\
&=\sum_{j=1}^{k} V(P, \ga_{n_j}(P(\cdot|J_j)), J_j)+V(P, \ga_1(P(\cdot|J_{(k, \infty)})), J_{(k, \infty)})=\sum_{j=1}^k\frac 1{n_j^2}\frac 1{12} \frac{1}{18^j}+\frac{25}{204}\frac 1{18^k}.
\end{align*}
Thus, the proof of the proposition is complete.
\end{proof}

\begin{defi} \label{defi1} Let $n_j$ for $1\leq j\leq k$ be the positive integers as defined in Proposition~\ref{prop215}. Then, we call the sequence $\set{n_1, n_2, \cdots, n_k, 1}$ a \tit{canonical sequence of order $n$} or just a \tit{canonical sequence}. Notice that once a canonical sequence of order $n$ is known the corresponding  optimal set of $n$-means can easily be determined and vice versa. Let $\set{n_1, n_2, \cdots, n_k, 1}$ be a canonical sequence and $m\in \D N$ with $1\leq m\leq k$. Then, the sequence $\set{n_m, n_{m+1}, \cdots, n_k, 1}$ is called a \tit{subblock} of the canonical sequence $\set{n_1, n_2, \cdots, n_k, 1}$.
\end{defi}

The canonical sequence has the following property.

\begin{lemma}\label{lemma65}  Let $\set{n_1, n_2, \cdots, n_k, 1}$ be a canonical sequence for $k\geq 2$. Then, $n_1> n_2> n_3 > \cdots >n_{k-1}\geq n_k=1$.
\end{lemma}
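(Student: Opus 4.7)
The plan is to view Lemma~\ref{lemma65} as a discrete optimization statement. By Proposition~\ref{prop215}, every canonical sequence $\set{n_1,\ldots,n_k,1}$ of order $n$ has quantization error
\[V=\sum_{j=1}^{k}\frac{1}{12\, n_j^2\, 18^j}+\frac{25}{204\cdot 18^k},\]
so the lemma amounts to showing that the minimizer of $V$, taken over $k\geq 1$ and positive integer tuples $(n_1,\ldots,n_k)$ with $\sum n_j=n-1$, must satisfy $n_k=1$ and $n_1>n_2>\cdots>n_{k-1}$. My proof will proceed by contradiction, combining two perturbations: a \emph{local exchange} $(n_j,n_{j+1})\mapsto(n_j+1,n_{j+1}-1)$ between adjacent pieces (which preserves $k$) and a \emph{structural change} that shrinks or grows $k$ by $1$ to move a point between an active piece and the tail.

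Step~1 ($n_k=1$). Suppose for contradiction that $n_k\geq 2$, and compare with the alternative canonical sequence $\set{n_1,\ldots,n_{k-1},n_k-1,1,1}$ of length $k+1$ (same total $n$). The difference $V_{\te{orig}}-V_{\te{alt}}$ factors as $\frac{1}{18^k}\big[\frac{1}{9}-\frac{2n_k-1}{12\, n_k^2(n_k-1)^2}\big]$; since the subtracted term is maximized at $n_k=2$ with value $\frac{1}{16}<\frac{1}{9}$ and decreases for larger $n_k$, the difference is strictly positive, giving the contradiction.

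Step~2 (strict decrease, by backward induction on $j$; the cases $k\leq 2$ are vacuous). At the base case $j=k-2$: if $n_{k-1}\geq 2$, apply the local swap $(n_{k-2},n_{k-1})\mapsto(n_{k-2}+1,n_{k-1}-1)$. Writing $a=n_{k-2}$, $b=n_{k-1}$ with the hypothesis $a\leq b$, the sign of $\Delta V$ is governed by $18(2a+1)(b-1)^2 b^2>(2b-1)a^2(a+1)^2$, which holds throughout $1\leq a\leq b$, $b\geq 2$, because the density ratio $18$ between successive pieces dominates. If instead $n_{k-1}=1$ the local swap is invalid (it would empty $J_{k-1}$), so I assume $n_{k-2}=1$ and compare with the shorter alternative $\set{n_1,\ldots,n_{k-3},2,1}$ of length $k-1$; the difference simplifies to $\frac{1}{18^{k-2}}\big(\frac{1}{16}-\frac{1}{162}\big)=\frac{73}{1296\cdot 18^{k-2}}>0$, again contradicting optimality and giving $n_{k-2}\geq 2$. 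The inductive step is then routine: assuming $n_{j+1}>\cdots>n_{k-1}$ with $j\leq k-3$, the chain forces $n_{j+1}\geq n_{j+2}+1\geq 2$, so the Case~A exchange applies to the pair $(n_j,n_{j+1})$ and delivers $n_j>n_{j+1}$.

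The main obstacle I anticipate is the Case~B situation $n_{k-1}=1$ at the base step: no local swap is available there, so one must compare against a configuration of strictly shorter length and verify that absorbing the entire piece $J_k$ into the tail $J_{(k-1,\infty)}$ beats retaining a lone point in $J_k$. This is precisely where the specific tail constant $\tfrac{25}{204}$ from Lemma~\ref{lemma1} and the geometric weighting $18^{-j}$ interact nontrivially; the numerical check ultimately reduces to the arithmetic inequality $\frac{1}{16}>\frac{1}{162}$, after which the strict convexity of $m\mapsto 1/m^2$ amplified by the factor $18$ takes care of everything else.
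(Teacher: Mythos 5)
Your proof is correct in substance, but it takes a genuinely different route from the paper's, and in two places it is actually more complete. The paper fixes $m=n_i+n_{i+1}$, minimizes the two-interval error $\frac 1{12}\big(\frac 1{n_i^2\,18^i}+\frac 1{(m-n_i)^2\,18^{i+1}}\big)$ over the split (the real minimizer being roughly $0.72\,m$), concludes $n_i>n_{i+1}$ when $m\geq 3$, and then simply appeals to Proposition~\ref{prop215} for $n_k=1$. You instead argue by discrete exchanges: the swap $(n_j,n_{j+1})\mapsto(n_j+1,n_{j+1}-1)$, whose governing inequality $18(2a+1)(b-1)^2b^2>(2b-1)a^2(a+1)^2$ does hold for all $1\leq a\leq b$, $b\geq 2$ (for $a\leq b-1$ it follows since $m\mapsto \frac 1{m^2}-\frac 1{(m+1)^2}$ is decreasing, and for $a=b$ it reduces to $18(2b+1)(b-1)^2>(2b-1)(b+1)^2$), together with comparisons that change $k$. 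The change-of-$k$ comparisons buy you an actual proof that $n_k=1$ (your Step~1 difference $\frac 1{18^k}\big[\frac 19-\frac{2n_k-1}{12\,n_k^2(n_k-1)^2}\big]$ is exactly right, and $\frac 1{16}<\frac 19$ settles it), and they also rule out interior consecutive $1$'s, i.e. $n_{k-2}=n_{k-1}=1$, which the paper's local statement that $m=2$ forces $n_i=n_{i+1}=1$ does not by itself exclude. Your framing that an arbitrary admissible tuple only gives an \emph{upper bound} for $V_n$ is the right justification for all of these contradictions and is worth keeping explicit.

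One concrete slip to fix: in Case~B of Step~2 the comparison sequence you wrote, $\set{n_1,\ldots,n_{k-3},2,1}$ of length $k-1$, has total $\sum_j n_j+1=n-1$, so it drops a point; moreover for that configuration the sign goes the wrong way (its error exceeds the original's), so it yields no contradiction. The configuration you evidently intend, and whose error difference you in fact computed, since $\frac 1{16}-\frac 1{162}=\frac{73}{1296}$ is exactly the difference for it, is $\set{n_1,\ldots,n_{k-3},2,1,1}$ of length $k$: absorb the lone point of $J_k$ into the tail, which becomes $J_{(k-1,\infty)}$, and move the freed point to $J_{k-2}$, keeping the total equal to $n$. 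With that correction (and noting that Case~B uses $n_k=1$ from Step~1, as your arithmetic already assumes), the argument is complete.
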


\begin{proof} Let $\ga_n$ be an optimal set of $n$-means, and $\set{n_1, n_2, \cdots, n_k, 1}$ be the canonical sequence associated with $\ga_n$. Take any $1\leq i<k$. Let $n_i+n_{i+1}=m$. Notice that $m$ is constant if $i$ remains fixed. The distortion error in the intervals $J_i$ and $J_{i+1}$ is given by
\begin{align} \label{eq67}
&V(P, \ga_{n_i}(P(\cdot|J_i)), J_i)+V(P, \ga_{n_{i+1}}(P(\cdot|J_{i+1})), J_{i+1})\notag\\
&=\frac 1{12} \Big(\frac {1}{n_i^2}\frac 1{18^i}+\frac {1}{(m-n_i)^2}\frac 1{18^{i+1}}\Big) \\
&=\frac 1{12}\frac 1{18^{i+1}}\Big(\frac{18 m^2-36 m n_i+19 n_1^2}{n_i^2 (m-n_i)^2}\Big),\notag
\end{align}
which is minimum if $n_i\approx  \frac{1}{19} (18 \, m-3\sqrt[3]{12} \, m+\sqrt[3]{18}\, m)$, where for any positive real number $x$, by $n_i\approx x$ it is meant that $n_i$ is the positive integer nearest to $x$. Then, notice that $m=2$ implies $n_i=n_{i+1}=1$, and if $m\geq 3$ then $n_i>\frac m 2$ yielding $n_i>n_{i+1}$.
By Proposition~\ref{prop215}, it follows that $n_k=1$, and thus, the lemma is yielded.
\end{proof}

\begin{remark} \label{remark71}
From Table~\ref{tab1}, we see that $\set{6, 3, 1, 1}$ is a canonical sequence, where $n_1=6$, $n_2=3$ and $n_3=1$. Take $m=n_1+n_2=9$, then $\frac{1}{19} (18 \, m-3\sqrt[3]{12} \, m+\sqrt[3]{18}\, m)=6.51432\approx 7\neq n_1$. Thus, we see that the canonical sequence $\set{6, 3, 1, 1}$ violates the statement $n_i\approx  \frac{1}{19} (18 \, m-3\sqrt[3]{12} \, m+\sqrt[3]{18}\, m)$ as mentioned in the proof of Lemma~\ref{lemma65}. But, such a canonical sequence does not occur frequently, and it does not violate the statement of Lemma~\ref{lemma65}. Putting $i=1$ and $m=9$ in the expression \eqref{eq67}, we see that it is minimum if $n_1=6$, which is the value that occurs in the canonical sequence $\set{6, 3, 1, 1}$. Hence, if $m$ and $i$ are known, using the expression \eqref{eq67} one can exactly determine $n_i$.
\end{remark}

We now give the following example.

\begin{example} By Lemma~\ref{lemma22}, for $n=3$, we have $\ga_3=\set{\frac 1 6, \frac{13}{18}, \frac{17}{18}}$ implying $\ga_{3, 1}=\set{\frac 16}$ and $\ga_{3, 2}=\set{\frac{13}{18}}$, and $\ga_1(P(\cdot|J_{(2, \infty)}))=\set{\frac{17}{18}}$. Here the canonical sequence is $\set{1, 1, 1}$. By Proposition~\ref{prop215},
\begin{align*} V_3&=V(P, \ga_{3, 1}, J_1)+V(P, \ga_{3, 2}, J_2)+V(P, \ga_1(P(\cdot|J_{(2, \infty)})), J_{(2, \infty)}),
\end{align*}
and so,  by Proposition~\ref{prop1}, $V_3=\frac 1{1^2} \frac 1{12}\frac 1{18}+\frac 1{1^2} \frac 1{12}\frac 1{18^2}+\frac {25}{204}\frac 1{18^2}=\frac{29}{5508}$, which is the quantization error for three-means obtained in Lemma~\ref{lemma22}.
\end{example}

The following lemma gives some more properties of canonical sequences.

\begin{lemma} \label{lemma53a}
Let $n\in \D N$ and $n\geq 2$. Then, $(i)$ a canonical sequence of order $n$ is unique, and $(ii)$ each subblock of a canonical sequence is also a canonical sequence.
\end{lemma}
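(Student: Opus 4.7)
The plan is to prove $(ii)$ first, as it is the structural workhorse, and then deduce $(i)$ by induction on $n$.

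For $(ii)$, the essential tool is that $P$ is self-similar on its tails. Define the affine map $\phi_m(y)=1-3^{1-m}+3^{1-m}y$ from $[0,1)$ onto $J_{(m-1,\infty)}$. A direct calculation with the piecewise density shows $(\phi_m^{-1})_*\bigl(P(\cdot|J_{(m-1,\infty)})\bigr)=P$, so up to affine conjugation, quantizing the tail conditional distribution is exactly the original problem on $[0,1)$. Now let $\ga_n$ be an optimal $n$-means set with canonical sequence $(n_1,\ldots,n_k,1)$, fix $1\le m\le k$, and set $N:=n_m+\cdots+n_k+1$ and $\ga_n^{(2)}:=\ga_n\cap J_{(m-1,\infty)}$. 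By Proposition~\ref{prop215} the Voronoi regions of $\ga_n^{(2)}$ cover $J_{(m-1,\infty)}$ up to $P$-measure zero, without interference from $\ga_n\setminus\ga_n^{(2)}$. Hence if some $\gb\subset J_{(m-1,\infty)}$ with $|\gb|=N$ had strictly smaller error on $J_{(m-1,\infty)}$ than $\ga_n^{(2)}$, substituting $\gb$ for $\ga_n^{(2)}$ inside $\ga_n$ would yield an $n$-point set with error below $V_n$, a contradiction. So $\ga_n^{(2)}$ is optimal of order $N$ for $P(\cdot|J_{(m-1,\infty)})$; pushing forward through $\phi_m^{-1}$ gives an optimal $N$-means set for $P$ whose canonical sequence, by counting in each rescaled piece $J_i=\phi_m^{-1}(J_{m-1+i})$, is exactly the subblock $(n_m,\ldots,n_k,1)$.

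For $(i)$, I induct on $n$. The bases $n=2,3$ are handled by Lemmas~\ref{lemma21} and~\ref{lemma22}, which exhibit a unique optimal set $\ga_n$ and hence a unique canonical sequence. Assume uniqueness for all $2\le n'<n$, and let $(n_1,\ldots,n_k,1)$ and $(m_1,\ldots,m_l,1)$ be two canonical sequences of order $n$. By Proposition~\ref{prop215} applied inside each sequence, both realize
\[V_n=\frac{1}{12\cdot 18\,n_1^2}+\frac{V_{n-n_1}}{18}=\frac{1}{12\cdot 18\,m_1^2}+\frac{V_{n-m_1}}{18}.\]
If $n_1=m_1$, then part $(ii)$ identifies the two subblocks $(n_2,\ldots,n_k,1)$ and $(m_2,\ldots,m_l,1)$ as canonical sequences of the common smaller order $n-n_1$, so the inductive hypothesis forces them to coincide and the full sequences agree.

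The remaining case, $n_1\neq m_1$, is the main obstacle. I would resolve it by showing that the discrete function $F_n(r):=\frac{1}{12\cdot 18\,r^2}+\frac{V_{n-r}}{18}$ on $\{1,\ldots,n-1\}$ has a unique minimizer, via the strict discrete convexity $F_n(r-1)+F_n(r+1)>2F_n(r)$. The first summand is strictly convex as a rational function of $r$; the second summand can be controlled by inductively established estimates on $\Delta V_k:=V_k-V_{k+1}$, in particular that $\Delta V_k$ is itself strictly decreasing in $k$, which follow by applying Proposition~\ref{prop215} and Lemma~\ref{lemma65} inductively. Together these force a unique minimizer, yielding $n_1=m_1$ and closing the induction.
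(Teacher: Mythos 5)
Your treatment of part $(ii)$ is sound and takes a genuinely different route from the paper: you exploit the exact self-similarity of $P$ (the affine map $\phi_m$ pushing $P(\cdot|J_{(m-1,\infty)})$ back to $P$) together with a substitution argument based on Proposition~\ref{prop215}, so that the subblock is realized by an honest optimal set of $N$-means and is therefore a canonical sequence by definition. The paper instead deduces $(ii)$ from $(i)$ by comparing the error sums $\sum_{j\geq 2}\frac{1}{n_j^2}\frac{1}{12}\frac{1}{18^j}$ for the two candidate allocations; your order of argument, proving $(ii)$ without invoking uniqueness, is legitimate and arguably cleaner. (A small notational point: the set you call $\ga_n\cap J_{(m-1,\infty)}$ should really be $\ga_n\cap[J_m(0),1]$, since the last optimal point $E(P(\cdot|J_{(k,\infty)}))$ lies in the gap between $J_{k+1}$ and $J_{k+2}$, not in the union $J_{(k,\infty)}$ itself; this does not affect the argument.)

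The genuine gap is in part $(i)$, exactly in the case $n_1\neq m_1$, which is the heart of uniqueness. Two things are missing. First, the strict decrease of $\Delta V_k:=V_k-V_{k+1}$ in $k$ (equivalently, strict convexity of the sequence $(V_n)$) is only asserted to ``follow by applying Proposition~\ref{prop215} and Lemma~\ref{lemma65} inductively''; no argument is given, and convexity of a quantization-error sequence is not an automatic fact --- for this distribution it would itself require a careful proof, essentially as hard as the uniqueness you are trying to establish. Second, even granting it, strict discrete convexity of $F_n(r)=\frac{1}{216\,r^2}+\frac{1}{18}V_{n-r}$ only shows that the increments $F_n(r+1)-F_n(r)$ are strictly increasing, hence that the minimizer set consists of at most two \emph{adjacent} integers; it does not exclude a tie $F_n(r)=F_n(r+1)$ at the minimum, which is precisely the possibility uniqueness must rule out. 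Ties are not a phantom worry: for the finitely-many-pieces distribution of Section~\ref{sec5} the optimal sets of $n$-means are genuinely non-unique, so any uniqueness proof here must use more than convexity. (You also use, without stating it, that $V_n\leq F_n(r)$ for every $r$, so that both $n_1$ and $m_1$ are minimizers of $F_n$; this is easy to supply via your own $\phi_2$-rescaling construction, but it should be said.) The paper's proof of $(i)$ instead runs through the two-interval reallocation formula \eqref{eq67}: for a fixed sum $m=n_i+n_{i+1}$ the split between adjacent intervals is determined, and comparing two hypothetical canonical sequences of the same order through such pairwise exchanges produces a contradiction. To close your argument you would need either to prove the monotonicity of $\Delta V_k$ \emph{and} additionally exclude adjacent ties $F_n(r)=F_n(r+1)$, or to replace this step by a pairwise-reallocation argument in the spirit of \eqref{eq67}.
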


\begin{proof}
Let $\set{n_1, n_2, \cdots, n_k, 1}$ be a canonical sequence of order $n$. For the sake of contradiction assume that $\set{n_1', n_2', \cdots, n_k', 1}$ is another canonical sequence of order $n$. Then, we must have indices $i_1, i_2, i_3$ such that $n_{i_2}\neq n_{i_2}'$, but $n_{i_1}+n_{i_2}>n_{i_1}'+n_{i_2}'$ and $n_{i_2}+n_{i_3}<n_{i_2}'+n_{i_3}'$. Putting $m=n_{i_1}+n_{i_2}$ in the expression similar to \eqref{eq67}, we can uniquely determine $n_{i_1}$ and $n_{i_2}$. Similarly, putting $m=n_{i_1}'+n_{i_2}'$, we can uniquely determine $n_{i_1}'$ and $n_{i_2}'$. Since $n_{i_1}+n_{i_2}>n_{i_1}'+n_{i_2}'$, we will have $n_{i_1}\geq n_{i_1}'$ and $n_{i_2}\geq n_{i_2}'$. Similarly, $n_{i_2}+n_{i_3}<n_{i_2}'+n_{i_3}'$ implies $n_{i_2}\leq n_{i_2}'$ and $n_{i_3}\leq n_{i_3}'$. Thus, we see that $n_{i_2}\geq n_{i_2}'$ and $n_{i_2}\leq n_{i_2}'$ yield a contradiction to our assumption that $n_{i_2}\neq  n_{i_2}'$. Therefore, we can assume that the canonical sequence of order $n$ is unique, which completes the proof of $(i)$. To prove $(ii)$, we proceed as follows: Let  $\set{n_1, n_2, \cdots, n_k, 1}$ be the canonical sequence of order $n$. It is enough to show that $\set{n_2, n_3, \cdots, n_k, 1}$ is the canonical sequence of order $n-n_1$. For the sake of contradiction, assume that $\set{n_2', n_3', \cdots, n_k', 1}$ is the canonical sequence of order $n-n_1$. Since a canonical sequence of a given order is unique, if we calculate the quantization error, we must have
\[\sum_{j=2}^k \frac{1}{n_j^2}\frac 1{12}\frac 1{18^j}>\sum_{j=2}^k \frac{1}{n_j'^2}\frac 1{12}\frac 1{18^j}
\te{ implying } \sum_{j=1}^k \frac{1}{n_j^2}\frac 1{12}\frac 1{18^j}>\frac 1{n_1^2}\frac 1{12}\frac 1{18}+\sum_{j=2}^k \frac{1}{n_j'^2}\frac 1{12}\frac 1{18^j},\]
which contradicts the fact that $\set{n_1, n_2, \cdots, n_k, 1}$ is the canonical sequence of order $n$. Hence, every subblock of a canonical sequence is also a canonical sequence.
\end{proof}

\begin{lemma} \label{lemma53b}
 Let $\set{n_1, n_2, \cdots, n_k, 1}$ be the canonical sequence of order $n$ for $n\in \D N$ and $n\geq 2$. Then, the canonical sequence of order $(n+1)$ will be either $\set{n_1, n_2, \cdots, n_{i-2}, n_{i-1}, n_i+1, n_{i+1}, \cdots,  n_{k-1}, n_k, 1}$ for some $1\leq i\leq k-1$, or $\set{n_1, n_2, \cdots, n_k, 1, 1}$.
\end{lemma}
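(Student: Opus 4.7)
The plan is to reduce the lemma to an error comparison among a short list of explicit candidate successors of $A=\{n_1,\ldots,n_k,1\}$. For $1\le i\le k$, let $A_i$ denote the sequence obtained from $A$ by replacing $n_i$ with $n_i+1$, and set $A_0:=\{n_1,\ldots,n_k,1,1\}$; each has order $n+1$. By the explicit formula in Proposition~\ref{prop215}, the reductions in distortion from $V_n$ are
\[
\Delta_i=\frac{2n_i+1}{12\,n_i^2(n_i+1)^2\cdot 18^i}\quad(1\le i\le k),\qquad \Delta_0=\frac{25}{204\cdot 18^k}-\frac{1}{12\cdot 18^{k+1}}-\frac{25}{204\cdot 18^{k+1}}=\frac{1}{9\cdot 18^k}.
\]
Since Lemma~\ref{lemma65} forces $n_k=1$, one computes $\Delta_k=\frac{1}{16\cdot 18^k}<\frac{1}{9\cdot 18^k}=\Delta_0$, so the extension $A_0$ strictly beats the increment $A_k$. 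Hence $A_k$ is never the canonical successor of $A$, which accounts for the restriction $1\le i\le k-1$ in the statement.

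The substantive step is to prove that the canonical sequence $B=\{m_1,\ldots,m_\ell,1\}$ of order $n+1$ actually lies in $\{A_0,A_1,\ldots,A_{k-1}\}$. I would proceed by strong induction on $n$, using the subblock property (Lemma~\ref{lemma53a}(ii)): the tails $\{n_2,\ldots,n_k,1\}$ and $\{m_2,\ldots,m_\ell,1\}$ are themselves canonical sequences, of orders $n-n_1$ and $n+1-m_1$ respectively. If $m_1=n_1+1$, the two tail orders coincide at $n-n_1$, so uniqueness (Lemma~\ref{lemma53a}(i)) forces the tails to agree, yielding $B=A_1$. If $m_1=n_1$, the tail orders differ by exactly one, and the inductive hypothesis applied to the tail yields either an internal increment (lifting to $B=A_i$ for some $i\ge 2$) or a further extension (lifting to $B=A_0$). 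The base cases $n=2$ and $n=3$ are read off Lemmas~\ref{lemma21} and~\ref{lemma22}.

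The main obstacle will be ruling out the remaining possibilities $m_1\ge n_1+2$ and $m_1\le n_1-1$. Here I would invoke the local optimization carried out inside the proof of Lemma~\ref{lemma65}: for a fixed pair total $n_i+n_{i+1}=m$, the real-valued optimum is $n_i=\frac{\sqrt[3]{18}}{1+\sqrt[3]{18}}\,m\approx 0.7238\,m$, a Lipschitz function of $m$ with constant strictly less than one. Consequently the integer-valued optimum of $n_i$ can shift by at most one when the pair total is perturbed by one. Iterating this Lipschitz estimate along consecutive pairs, together with the fact that the inner budget of $B$ exceeds that of $A$ by exactly one, pins $|m_1-n_1|\le 1$ and in fact $m_1\ge n_1$; any other configuration yields distortion strictly greater than $\min_j V(A_j)$ (via a direct term-by-term comparison of the $J_1$-savings against the tail-error penalty incurred by removing quantizers from the tail), contradicting the optimality of $B$. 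Handling these boundary cases rigorously, especially the mixed case $m_1=n_1-1$, is the most delicate part of the argument.
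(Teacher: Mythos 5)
Your overall strategy is the same as the paper's: strong induction on $n$ with base cases from Lemma~\ref{lemma21} and Lemma~\ref{lemma22}, the uniqueness and subblock properties of Lemma~\ref{lemma53a} to dispose of the cases $m_1=n_1$ and $m_1=n_1+1$ (lifting the inductive hypothesis from the tails of orders $n-n_1$ and $n+1-m_1$), and the two-interval error \eqref{eq67} to constrain the remaining possibilities for $m_1$. Your explicit comparison $\Delta_k=\frac{1}{16\cdot 18^k}<\frac{1}{9\cdot 18^k}=\Delta_0$ is correct and is a nice touch: it accounts for the restriction $1\le i\le k-1$ more explicitly than the paper does.

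The genuine gap is exactly the step that carries the content of the lemma: ruling out $m_1\ge n_1+2$ and $m_1\le n_1-1$. You only sketch this, and you yourself flag the mixed case $m_1=n_1-1$ as unresolved, so the argument is incomplete as it stands. Moreover, the sketch as written would not go through: your ``Lipschitz'' step infers that the integer entry moves by at most one from the fact that the real minimizer $\frac{\sqrt[3]{18}}{1+\sqrt[3]{18}}\,m$ of \eqref{eq67} moves by less than one, i.e.\ it treats the canonical entry as the nearest integer to the real optimum. Remark~\ref{remark71} shows precisely that this rounding heuristic can fail (for the canonical sequence $\set{6,3,1,1}$ one has $m=9$, real optimum $\approx 6.51$, nearest integer $7$, but $n_1=6$), so any such stability statement must be proved for the integer minimizer of \eqref{eq67} itself, not deduced from rounding. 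What is actually needed --- and what the paper uses --- is a monotonicity statement for the optimal integer split: if two canonical sequences have consecutive pair totals satisfying $m_i+m_{i+1}>n_i+n_{i+1}$, then $m_i\ge n_i$ and $m_{i+1}\ge n_{i+1}$; combined with the entrywise comparison of the two tails (canonical sequences of orders differing by $k-1\ge 1$, compared via repeated application of the inductive hypothesis), this forces the contradiction for $m_1=n_1+k$, $k\ge 2$, and a symmetric argument handles $m_1<n_1$. Your ``iterating along consecutive pairs'' gestures at this but never formulates or proves the monotonicity of the integer minimizer, so the reduction to $m_1\in\set{n_1,\,n_1+1}$ --- the heart of the lemma --- is missing.
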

\begin{proof}
We prove the lemma by induction. By Lemma~\ref{lemma21} and Lemma~\ref{lemma22}, the canonical sequences of order two and three are $\set{1, 1}$ and $\set{1, 1, 1}$, respectively. Again, by Remark~\ref{rem1}, it can be seen that the canonical sequence of order four is $\set{2, 1, 1}$. Thus, we see that the lemma is true for $n=2$ and $n=3$. Let $N\geq 4$ be a positive integer such that the lemma is true for all positive integers $n$, where $2\leq n\leq N-1$. We will show that the lemma is also true for $n=N$. Let $\set{n_1, n_2, \cdots, n_k, 1}$ be the canonical sequence of order $N$ implying that the optimal set $\ga_N$ contains $n_1+n_2+\cdots+n_k$ elements from $J_1\uu J_2\uu \cdots \uu J_k$ and one element from $J_{(k, \infty)}$.
Then, the optimal set $\ga_{N+1}$ contains exactly one or two elements from $J_{(k, \infty)}$. Assume that $\ga_{N+1}$ contains two elements from $J_{(k, \infty)}$. Since $\set{1, 1}$ is the only subblock of order two, the canonical sequence of order $(N+1)$ is $\set{m_1, m_2, \cdots, m_k, 1, 1}$. Again, as $m_1+m_2+\cdots+m_k=n_1+n_2+\cdots+n_k=N-1$ and the canonical sequence of order $N$ is unique, we must have $m_1=n_1, \, m_2=n_2, \, \cdots, \, m_k=n_k$. Thus, in this case the lemma is true. Now, assume that $\ga_{N+1}$ contains only one element from $J_{(k, \infty)}$.
In this case the canonical sequence of order $(N+1)$ is $\set{m_1, m_2, \cdots, m_k, 1}$. We need to show that $m_j=n_j+1$ for exactly one $1\leq j\leq k$, and $m_j=n_j$ for all other $1\leq j\leq k$. First, assume that $m_1=n_1$. Then, both $\set {m_2, m_3, \cdots, m_k, 1}$ and $\set{n_2, n_3, \cdots, n_k, 1}$ are canonical sequences of order $N+1-m_1$ and $N-n_1$ respectively. Since $(N+1-m_1)-(N-n_1)=1$, and we assumed that the lemma is true for all positive integers $n\leq N-1$, we have $m_j=n_j+1$ for exactly one $2\leq j\leq k$, and $m_j=n_j$ for all other $2\leq j\leq k$, which combined with $m_1=n_1$ yields that the lemma is true for $n=N$. If $m_1=n_1+1$, then as both $\set {m_2, m_3, \cdots, m_k, 1}$ and $\set{n_2, n_3, \cdots, n_k, 1}$ are canonical sequences of the same order, we have $m_2=n_2, m_3=n_3, \cdots, m_k=n_k$, which combined with $m_1=n_1+1$ yields that the lemma is true for $n=N$. We now show that $m_1$ can not be any integer other than $n_1$ or $n_1+1$. For the sake of contradiction, assume that $m_1=n_1+k$ for some $k\geq 2$.
Then, $\set{m_2, \cdots, m_{k}, 1}$ is the canonical sequence of order $N+1-m_1=N+1-(n_1+k)=N-n_1-(k-1)$, and $\set{n_2, n_3, \cdots, n_k, 1}$ is the canonical sequence of order $N-n_1$. Since we assumed that the lemma is true for all positive integers $n\leq N-1$, we must have $n_j> m_j$ for at least one $2\leq j\leq k$. Without any loss of generality, assume that $n_2>m_2$ and then $n_2=m_2+\ell$ for some $1\leq \ell\leq  (k-1)$, and so,  $m_1+m_2=n_1+n_2+(k-\ell)>n_1+n_2$, which by an expression similar to \eqref{eq67} implies that $m_1\geq n_1$ and $m_2\geq n_2$ yielding a contradiction. Similarly, we can show that if $m_1=n_1-k$ for any $k\in \D N$, a contradiction arises. Thus, the lemma is true for $n=N$ if it is true for all positive integers $n\leq N-1$. Hence, by the principle of Mathematical Induction the proof of the lemma is complete.
\end{proof}

\begin{table}
\begin{center}
\begin{tabular}{ |c|c||c|c|| c|c|}
 \hline
$n$ & $\te{canonical sequence} $ & $n$ & $\te{canonical sequence}$  & $n$ & $\te{canonical sequence} $  \\
\hline
2 & \set{1, 1}  & 21 & \set{12, 5, 2, 1, 1} & 40 & \set{24, 9, 4, 1, 1, 1}\\3 & \set{1, 1, 1} & 22 & \set{13, 5, 2, 1, 1} & 41 & \set{25, 9, 4, 1, 1, 1} \\ 4 & \set{2, 1, 1}  & 23 &\set{14, 5, 2, 1, 1}& 42 & \set{25, 10, 4, 1, 1, 1}\\5 & \set{3, 1, 1} &  24 &\set{14, 6, 2, 1, 1} & 43 & \set{25, 10, 4, 2, 1, 1} \\6 & \set{3, 1, 1, 1} & 25 &\set{15, 6, 2, 1, 1} & 44 & \set{26, 10, 4, 2, 1, 1}\\ 7 & \set{4, 1, 1, 1} &  26 &\set{16, 6, 2, 1, 1} & 45 & \set{27, 10, 4, 2, 1, 1} \\8 & \set{4, 2, 1, 1} &  27 & \set{17, 6, 2, 1, 1} & 46 & \set{27, 11, 4, 2, 1, 1}\\9 & \set{5, 2, 1, 1} &   28 & \set{17, 6, 3, 1, 1} & 47 & \set{28, 11, 4, 2, 1, 1}\\10 & \set{6, 2, 1, 1} &  29 &\set{17, 7, 3, 1, 1}   & 48 & \set{29, 11, 4, 2, 1, 1}\\11 & \set{6, 3, 1, 1} &30 & \set{18, 7, 3, 1, 1}& 49 & \set{30, 11, 4, 2, 1, 1} \\12 & \set{7, 3, 1, 1} & 31 & \set{19, 7, 3, 1, 1} & 50 & \set{30, 12, 4, 2, 1, 1} \\13 & \set{8, 3, 1, 1} & 32 & \set{20, 7, 3, 1, 1} & 51 & \set{31, 12, 4, 2, 1, 1} \\14 & \set{8, 3, 1, 1, 1} &  33 & \set{20, 8, 3, 1, 1} & 52 & \set{31, 12, 5, 2, 1, 1} \\ 15 &  \set{9, 3, 1, 1, 1} & 34 & \set{21, 8, 3, 1, 1}  & 53 & \set{32, 12, 5, 2, 1, 1}\\ 16 & \set{9, 4, 1, 1, 1} & 35 & \set{21, 8, 3, 1, 1, 1} & 54 & \set{33, 12, 5, 2, 1, 1}\\ 17 & \set {10, 4, 1, 1, 1} & 36 & \set{22, 8, 3, 1, 1, 1} & 55 & \set{33, 13, 5, 2, 1, 1}\\ 18 & \set{ 10, 4, 2, 1, 1} & 37 & \set{22, 9, 3, 1, 1, 1} & 56 & \set{34, 13, 5, 2, 1, 1} \\ 19 &\set{11, 4, 2, 1, 1} & 38 & \set{23, 9, 3, 1, 1, 1}& 57 & \set{35, 13, 5, 2, 1, 1}\\  20 &\set{ 12, 4, 2, 1, 1} & 39 & \set{24, 9, 3, 1, 1, 1} & 58 & \set{35, 14, 5, 2, 1, 1}\\
 \hline
\end{tabular}
 \end{center}
 \
\caption{List of canonical sequences for the optimal sets $\ga_n$ in the range $2\leq n\leq 58$.}
    \label{tab1}
\end{table}
We are now ready to state and prove the following theorem which gives the optimal set of $(n+1)$-means whenever the optimal set of $n$-means is known.

\begin{theorem}\label{Th1}
Let $\set{n_1, n_2, \cdots, n_k, 1}$ be the canonical sequence for an optimal set of $n$-means for some $n\in \D N$. Construct the sequence $\set{A(i)}_{i=1}^k$ such that
\[A(i)=\set{n_1, n_2, \cdots,n_{i-1}, n_i+1, n_{i+1}, \cdots, n_k}\]
for $1\leq i\leq k$. For $1\leq i\leq k$, set
\begin{align*} V(A(i))& :=\sum_{\substack{j=1\\ j\neq i}}^k \frac 1{n_j^2} \frac1 { 12} \frac 1{18^j}+\frac 1{(n_i+1)^2} \frac1 { 12}  \frac 1{18^j}+\frac{25}{204}\frac 1{18^k}, \te{ and } \\
V(\infty)&:=\sum_{j=1}^{k} \frac 1{n_j^2} \frac1 { 12}  \frac 1{18^j} +\frac 1{1^2}\frac 1{12}\frac 1{18^{k+1}}+\frac{25}{204}\frac 1{18^{k+1}}.
\end{align*}
Write $V_{\min}:=\min\set{\min\set{V(A(j)) : 1\leq j\leq k}, V(\infty)}$.
If $V_{\min}:=V(A(m))$ for some $1\leq m\leq k$, then the sequence $\set{n_1, n_2, \cdots, n_{m-1}, n_m+1, n_{m+1}, \cdots, n_k, 1}$ is the canonical sequence which gives an optimal set of $(n+1)$-means. If $V_{\min}=V(\infty)$, then $\set{n_1, n_2, \cdots, n_k, 1, 1}$ is the canonical sequence which gives an optimal set of $(n+1)$-means.
\end{theorem}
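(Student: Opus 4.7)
The plan is to reduce the theorem to a straightforward finite comparison by combining two earlier results: Lemma~\ref{lemma53b}, which narrows down the possible canonical sequences of order $n+1$ to a finite list of candidates, and Proposition~\ref{prop215}, which provides the closed-form formula for the quantization error associated with any canonical sequence. Once those are in hand, the theorem becomes essentially a matter of minimizing a finite collection of explicit numbers.

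First, I would invoke Lemma~\ref{lemma53b} to enumerate the candidate canonical sequences of order $n+1$ that arise from $\{n_1,\ldots,n_k,1\}$. Each candidate is obtained either by (a) incrementing exactly one coordinate $n_i$ to $n_i+1$ for some index $i$, leaving all other coordinates and the trailing $1$ unchanged, or (b) appending an extra $1$ to produce $\{n_1,\ldots,n_k,1,1\}$. This gives a finite list of candidate sequences, which correspond bijectively to the sequences $A(i)$ (augmented by a trailing $1$) for the type (a) candidates and the sequence $\{n_1,\ldots,n_k,1,1\}$ for the type (b) candidate.

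Next, for each candidate I would apply Proposition~\ref{prop215} to write down the quantization error that the corresponding set of $(n+1)$-means incurs. For candidate of type (a) at index $i$, the error is precisely $V(A(i))$ as defined in the theorem: the sum over $j\neq i$ of $\frac{1}{n_j^2}\frac{1}{12}\frac{1}{18^j}$, plus $\frac{1}{(n_i+1)^2}\frac{1}{12}\frac{1}{18^i}$, plus the tail term $\frac{25}{204}\frac{1}{18^k}$. For candidate of type (b), the length of the canonical sequence grows by one, $k$ becomes $k+1$ with $n_{k+1}=1$, and the tail term picks up an additional factor of $1/18$, so the error is precisely $V(\infty)$.

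Finally, I would close the argument by uniqueness and optimality: by Lemma~\ref{lemma53a}$(i)$ the canonical sequence of order $n+1$ is unique, and by definition the optimal set of $(n+1)$-means realizes the infimum of the distortion. Since every admissible candidate is actually realized by some $(n+1)$-point configuration (namely, the union of the optimal sets of $n_j$-means on each $J_j$ plus $E(P(\cdot|J_{(k,\infty)}))$ or $E(P(\cdot|J_{(k+1,\infty)}))$ respectively, as in Proposition~\ref{prop1}), the canonical sequence of order $n+1$ must be the one that achieves $V_{\min}:=\min\{\min_i V(A(i)),\,V(\infty)\}$. Identifying whether the minimum occurs at some $A(m)$ or at $V(\infty)$ then gives exactly the two cases stated in the theorem. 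The only real subtlety is verifying that every candidate actually corresponds to a valid canonical sequence (e.g., checking strict monotonicity from Lemma~\ref{lemma65} where applicable), but this requires no new ideas since any index $i$ at which incrementing would violate monotonicity simply yields a $V(A(i))$ that cannot be the minimum — the minimization automatically selects an admissible configuration.
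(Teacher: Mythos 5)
Your proposal is correct and takes essentially the same route as the paper: the paper's own proof likewise rests on Lemma~\ref{lemma53b} to confine the canonical sequence of order $n+1$ to the finitely many candidates (increment one $n_i$, or append a $1$), with the error formula from Proposition~\ref{prop215} turning the choice into the minimization of the quantities $V(A(i))$ and $V(\infty)$. You actually spell out more than the paper does, since the paper only checks the case $k=1$ and then cites Lemma~\ref{lemma53b}, leaving implicit the realizability-and-comparison step (and the appeal to uniqueness from Lemma~\ref{lemma53a}) that you make explicit.
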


\begin{proof} By Lemma~\ref{lemma21}, we see that $\set{1, 1}$ is the canonical sequence for an optimal set of two-means and $\set{1, 1, 1}$ is the canonical sequence for an optimal set of three-means. In fact, for the canonical sequence $\set{1, 1}$, we have
$V(A(1))=\frac 1{2^2}\frac 1{12}\frac 1{18}+\frac{25}{204}\frac 1{18}=\frac{13}{1632}$ and $V(\infty)=\frac 1{1^2}\frac 1{12}\frac 1{18}+\frac 1{1^2}\frac 1{12}\frac 1{18^2}+\frac{25}{204}\frac 1{18^2}=\frac{29}{5508}$ implying $V(\infty)<V(A(1))$. Thus, we see that the theorem is true if $k=1$. Let us now assume that $\set{n_1, n_2, \cdots, n_k, 1}$ is the canonical sequence for an optimal set of $n$-means for $n\in \D N$. Then, using the hypothesis of the theorem, and Lemma~\ref{lemma53b}, the proof of the theorem is complete.
\end{proof}


\begin{remark}
Using Theorem~\ref{Th1}, we obtain Table~\ref{tab1} which gives a list of canonical sequences of order $n$ for $2\leq n\leq 58$. Notice that for any positive integer $n\in \D N$, $n\geq 2$, to obtain the canonical sequence of order $(n+1)$ one needs to know the canonical sequence of order $n$. A closed formula to obtain the canonical sequence of any order $n\in \D N$ is still not known. On the other hand, in the following section, we show that for a piecewise uniform distribution with finitely many pieces we can easily determine the optimal sets of $n$-means and the $n$th quantization errors for all $n\in \D N$, see Note~\ref{note12}.
\end{remark}

\section{Optimal Quantization for Uniform Distribution with Finitely Many Pieces} \label{sec5}

Most of the notations and basic definitions used in this section are same as they are described in Section~\ref{sec3}. Write $J_1=[0, \frac 13]$, $J_2=[\frac 23, \frac 79]$ and $J_3=[\frac 89, 1]$. Let $P$ be a piecewise uniform distribution on the real line with probability density function (pdf) $f(x)$ given by
\[f(x)=\left\{\begin{array}{lll}
\frac 32 & \te{ if } x \in J_1, \\

\frac 9 4 & \te{ if } x \in J_2\uu J_3, \\

 0  & \te{ otherwise}.
\end{array}\right.
\]

\begin{lemma}\label{lemma111}
Let $E(P)$ and $V(P)$ represent the expected value and the variance of a random variable $X$ with distribution $P$. Then, $E(P)=\frac 12$ and $V(P)=\frac{119}{972}$.
\end{lemma}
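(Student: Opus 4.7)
The proof will be a direct integral computation, entirely analogous to Lemma~\ref{lemma1} but with only three pieces instead of infinitely many. My plan is to exploit the decomposition $P = \tfrac{1}{2}P(\cdot\mid J_1) + \tfrac{1}{4}P(\cdot\mid J_2) + \tfrac{1}{4}P(\cdot\mid J_3)$, where each conditional distribution is uniform on its respective interval. This reduces everything to the elementary formulas for the mean and variance of a uniform distribution.

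First I would verify that $f$ is a genuine probability density by observing $P(J_1)=\tfrac{3}{2}\cdot\tfrac{1}{3}=\tfrac{1}{2}$ and $P(J_2)=P(J_3)=\tfrac{9}{4}\cdot\tfrac{1}{9}=\tfrac{1}{4}$, which sum to one. For the expectation, I would write
\[E(P)=\int_{J_1}x f(x)\,dx+\int_{J_2}x f(x)\,dx+\int_{J_3}x f(x)\,dx,\]
and use that the center of each uniform piece is $E(P(\cdot\mid J_1))=\tfrac{1}{6}$, $E(P(\cdot\mid J_2))=\tfrac{13}{18}$, $E(P(\cdot\mid J_3))=\tfrac{17}{18}$. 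Multiplying each by its mass yields $\tfrac{1}{2}\cdot\tfrac{1}{6}+\tfrac{1}{4}\cdot\tfrac{13}{18}+\tfrac{1}{4}\cdot\tfrac{17}{18}=\tfrac{1}{12}+\tfrac{13}{72}+\tfrac{17}{72}=\tfrac{1}{2}$, as claimed.

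For the variance I would use the standard conditional-variance decomposition
\[V(P)=\sum_{i=1}^{3}P(J_i)\,\mathrm{Var}(P(\cdot\mid J_i))+\sum_{i=1}^{3}P(J_i)\bigl(E(P(\cdot\mid J_i))-\tfrac{1}{2}\bigr)^{2}.\]
The first (within-piece) sum uses $\mathrm{Var}(P(\cdot\mid J_1))=\tfrac{(1/3)^2}{12}=\tfrac{1}{108}$ and $\mathrm{Var}(P(\cdot\mid J_2))=\mathrm{Var}(P(\cdot\mid J_3))=\tfrac{(1/9)^2}{12}=\tfrac{1}{972}$, contributing $\tfrac{1}{216}+\tfrac{1}{1944}$. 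The second (between-piece) sum uses the squared deviations $(\tfrac{1}{6}-\tfrac{1}{2})^{2}=\tfrac{1}{9}$, $(\tfrac{13}{18}-\tfrac{1}{2})^{2}=(\tfrac{17}{18}-\tfrac{1}{2})^{2}=\tfrac{16}{324}$ and $\tfrac{64}{324}$ respectively, contributing $\tfrac{1}{18}+\tfrac{20}{324}$. Converting everything to the common denominator $1944$ gives $\tfrac{9+1+108+120}{1944}=\tfrac{238}{1944}=\tfrac{119}{972}$.

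There is no real obstacle here since the computation is routine; the only thing to be careful about is arithmetic with the many fractions. As a sanity check, one could alternatively compute $V(P)=E(X^{2})-\tfrac{1}{4}$ directly by evaluating $\int_{J_i}x^{2}f(x)\,dx$ on each piece and summing, which should yield the same answer $\tfrac{119}{972}$.
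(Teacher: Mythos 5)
Your computation is correct and arrives at both $E(P)=\tfrac12$ and $V(P)=\tfrac{119}{972}$, but it is organized differently from the paper. The paper's proof of Lemma~\ref{lemma111} is a bare-hands integration: it evaluates $E(P)=\int_{J_1}\tfrac{3x}{2}\,dx+\int_{J_2}\tfrac{9x}{4}\,dx+\int_{J_3}\tfrac{9x}{4}\,dx$ and $V(P)=\int(x-\tfrac12)^2\,dP$ piece by piece. You instead write $P$ as the mixture $\tfrac12 P(\cdot|J_1)+\tfrac14 P(\cdot|J_2)+\tfrac14 P(\cdot|J_3)$ and invoke the law of total variance, so that only the standard facts about a uniform law (mean at the midpoint, variance equal to $\text{length}^2/12$) are needed. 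Your route buys two things: it reuses the conditional means $\tfrac16,\tfrac{13}{18},\tfrac{17}{18}$ of Lemma~\ref{lemma222a}, and the within-piece terms you produce, $\tfrac12\cdot\tfrac1{108}=\tfrac1{216}$ and $\tfrac14\cdot\tfrac1{972}=\tfrac1{3888}$, are exactly the one-point quantization errors on $J_1$ and on $J_2,J_3$ that reappear in Proposition~\ref{prop111} and in the error formula of Proposition~\ref{prop2155}; the paper's direct integration is slightly more self-contained, requiring no decomposition identity. One cosmetic slip: you write the chain $(\tfrac{13}{18}-\tfrac12)^2=(\tfrac{17}{18}-\tfrac12)^2=\tfrac{16}{324}$, whereas the two squared deviations are $\tfrac{16}{324}$ and $\tfrac{64}{324}$ respectively; since your subsequent totals $\tfrac1{18}+\tfrac{20}{324}$ and $\tfrac{238}{1944}=\tfrac{119}{972}$ use the correct values, this is a typo rather than a gap.
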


\begin{proof} We have
\begin{align*}
E(P) & =\int x dP=\int_{J_1} \frac{3 x}{2} \, dx+\int_{J_2} \frac{9 x}{4} \, dx+\int_{J_3} \frac{9 x}{4} \, dx=\frac 12, \te{ and } \\
V(P)& =\int(x-\frac 12)^2 dP=\int_{J_1} \frac{3}{2}(x-\frac 12)^2 dx+\int_{J_2} \frac{9}{4}(x-\frac 12)^2 \, dx+\int_{J_3} \frac{9 }{4}(x-\frac 12)^2 \, dx=\frac{119}{972},
\end{align*}
and thus the lemma is yielded.
\end{proof}

\begin{lemma} \label{lemma222a}
For $k=1, 2, 3$, let $E(P(\cdot|J_k))$ denote the expectations of the random variable $X$ with distributions $P(\cdot|J_k)$. Then,
\[E(P(\cdot|J_1))=\frac 16,  \, E(P(\cdot|J_2))=\frac{13}{18} \te{ and } E(P(\cdot|J_3))=\frac{17}{18}.\]
\end{lemma}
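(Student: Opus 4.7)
The statement asserts three conditional expectations, each with respect to the restriction of $P$ to one of the three interval pieces $J_1, J_2, J_3$. My plan is to compute each one directly by the definition
\[E(P(\cdot|J_k))=\frac 1{P(J_k)}\int_{J_k} x\, dP.\]

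First I would record the three normalizing masses. Since the density $f$ is constant on each $J_k$, a one-line integral gives $P(J_1)=\tfrac 32\cdot\tfrac 13=\tfrac 12$, $P(J_2)=\tfrac 94\cdot(\tfrac 79-\tfrac 23)=\tfrac 14$, and $P(J_3)=\tfrac 94\cdot(1-\tfrac 89)=\tfrac 14$; in particular they sum to one, as a quick consistency check that $f$ really is a density. Next, because $f$ is constant on each $J_k$, the conditional measure $P(\cdot|J_k)$ is simply the uniform probability measure on $J_k$, so the conditional expectation is just the midpoint of the interval $J_k$. That immediately yields
\[E(P(\cdot|J_1))=\tfrac 12\bigl(0+\tfrac 13\bigr)=\tfrac 16,\q E(P(\cdot|J_2))=\tfrac 12\bigl(\tfrac 23+\tfrac 79\bigr)=\tfrac{13}{18},\q E(P(\cdot|J_3))=\tfrac 12\bigl(\tfrac 89+1\bigr)=\tfrac{17}{18}.\]

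If one prefers not to invoke the midpoint shortcut, the same values drop out of the direct integrals: for instance, $\int_{J_1}\tfrac{3x}{2}\,dx=\tfrac 3 4 x^2\big|_0^{1/3}=\tfrac 1{12}$, and dividing by $P(J_1)=\tfrac 12$ gives $\tfrac 16$; analogous one-line antiderivative computations on $J_2$ and $J_3$ recover the other two values. There is no real obstacle here; the only pitfall is arithmetic bookkeeping with the fractions $\tfrac 13,\tfrac 23,\tfrac 79,\tfrac 89$, which I would double-check by verifying that each computed conditional mean does lie in the corresponding interval.
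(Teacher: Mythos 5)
Your proposal is correct and follows essentially the same route as the paper, which also computes $E(P(\cdot|J_k))=\frac 1{P(J_k)}\int_{J_k}x\,dP$ directly from the definition of conditional expectation (e.g.\ $2\int_{J_1}\frac 32 x\,dx=\frac 16$) and treats $J_2$, $J_3$ "similarly." Your midpoint observation is just a harmless shortcut for the same computation, consistent with the paper's earlier remark that the conditional mean of a uniform piece is the midpoint of that interval.
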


\begin{proof} By the definition of the conditional expectation, we have
\[E(P(\cdot|J_1))=\int_{J_1} x dP(\cdot|J_1)=\frac 1{P(J_1)} \int_{J_1}x dP =2 \int_{J_1} \frac 3 2 x dx=\frac 16, \te{ and similarly } \]
 we can obtain  $E(P(\cdot|J_2))=\frac{13}{18} \te{ and } E(P(\cdot|J_3))=\frac{17}{18}$.
Hence, the lemma is yielded.
\end{proof}

The following proposition is similar to Proposition~\ref{prop1}.
\begin{prop} \label{prop111}
Let $n\in \D N$. Then, the set $\set{\frac{2i-1}{2n}\frac 1{3} : 1\leq i\leq n}$ is a unique optimal set of $n$-means for $P(\cdot|J_1)$, i.e., $\ga_n(P(\cdot|J_1))=\set{\frac{2i-1}{2n}\frac 1{3} : 1\leq i\leq n}$. Similarly, $\ga_n(P(\cdot|J_2))=\set{\frac 23+\frac{2i-1}{2n}\frac 1{9} : 1\leq i\leq n}$ and $\ga_n(P(\cdot|J_3))=\set{\frac 89+\frac{2i-1}{2n}\frac 1{9} : 1\leq i\leq n}$. Moreover,
\[V(P, \ga_n(P(\cdot|J_1)), J_1)=\frac{1}{216 n^2} \te{ and } V(P, \ga_n(P(\cdot|J_2)), J_2)=V(P, \ga_n(P(\cdot|J_3)), J_3)=\frac{1}{3888 n^2}.\]
\end{prop}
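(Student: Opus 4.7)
The proof will essentially mirror that of Proposition~\ref{prop1}, since each $P(\cdot|J_k)$ is a uniform distribution on the interval $J_k$ and the only thing that changes from one case to another is the length of the interval and the value of the density. My plan is to give the argument once for $P(\cdot|J_1)$ in detail and then indicate the parallel computations for $J_2$ and $J_3$.

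First, I would note that $P(\cdot|J_1)$ is uniform on $J_1 = [0, 1/3]$, so for any candidate $n$-point set $\alpha = \{a_1 < \cdots < a_n\} \subset J_1$ an application of Proposition~\ref{prop0} forces each $a_i$ to be the midpoint of its Voronoi cell in $J_1$ and forces the boundary points between consecutive cells to be equally spaced. Concretely, if the Voronoi boundaries are $0 = t_0 < t_1 < \cdots < t_{n-1} < t_n = 1/3$, then by Proposition~\ref{prop0}(iii) applied to the uniform conditional distribution we have $a_i = \tfrac12 (t_{i-1}+t_i)$, and the bisector condition $t_i = \tfrac12 (a_i + a_{i+1})$ combined with these two linear relations forces the $t_i$ to be $i/(3n)$ for $0 \le i \le n$. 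Hence the unique candidate is $a_i = \frac{2i-1}{2n}\cdot \frac{1}{3}$, and this must be the unique optimal set of $n$-means for $P(\cdot|J_1)$.

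Next, to compute $V(P, \ga_n(P(\cdot|J_1)), J_1)$, I would use the fact that $f = 3/2$ on $J_1$ and that there are $n$ Voronoi cells each of length $1/(3n)$ centered at the corresponding $a_i$. The error in each cell is
\[
\int_{-1/(6n)}^{1/(6n)} \tfrac{3}{2}\, u^2 \, du = \frac{1}{216\, n^3},
\]
so summing over $n$ cells gives $V(P, \ga_n(P(\cdot|J_1)), J_1) = \frac{1}{216\, n^2}$, as claimed.

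For $J_2 = [2/3, 7/9]$ and $J_3 = [8/9, 1]$, both intervals have length $1/9$ and on both $f = 9/4$. Repeating the same Voronoi-midpoint argument, with the left endpoint shifted to $2/3$ or $8/9$, yields the stated sets $\ga_n(P(\cdot|J_k))$, and the analogous error computation gives
\[
n \cdot \int_{-1/(18n)}^{1/(18n)} \tfrac{9}{4}\, u^2 \, du = n \cdot \frac{1}{3888\, n^3} = \frac{1}{3888\, n^2}
\]
for $k = 2, 3$. There is no real obstacle here; the computation is essentially a specialization of Proposition~\ref{prop1}. The only care needed is to remember that $V(P, \cdot, J_k)$ uses the global measure $P$ (with density $3/2$ on $J_1$ and $9/4$ on $J_2 \cup J_3$), not the conditional measure $P(\cdot|J_k)$, which explains why the denominators $216$ and $3888$ differ.
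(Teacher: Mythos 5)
Your proposal is correct and follows essentially the same route as the paper: the paper states this proposition without proof as "similar to Proposition~\ref{prop1}", and your argument is exactly the specialization of that proof (equally spaced Voronoi boundaries with midpoints as quantizers for a uniform conditional distribution, then the per-cell error computed with the global density $3/2$ on $J_1$ and $9/4$ on $J_2\uu J_3$). Your computations $\frac{1}{216n^2}$ and $\frac{1}{3888n^2}$ check out.
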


The following two lemmas are similar to Lemma~\ref{lemma21} and Lemma~\ref{lemma22}.
\begin{lemma}  \label{lemma211}
Let $\ga:=\set{a_1, a_2}$ be an optimal set of two-means such that $a_1<a_2$. Then, $a_1= \frac 16$ and $a_2=\frac 56$, and the corresponding quantization error is $V_2=\frac{11}{972}$.
\end{lemma}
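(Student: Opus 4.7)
The proof should mirror the structure of Lemma~\ref{lemma21} very closely, since the distribution is essentially a finite truncation of the previous one. Here is the plan.

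First, I would establish the upper bound by computing the distortion error of the candidate set $\beta := \set{\frac 16, \frac 56}$. Using Proposition~\ref{prop111}, we have $V(P,\set{\frac 16}, J_1) = \frac{1}{216}$, and for the region $J_2 \uu J_3$ note that $E(P(\cdot|J_2\uu J_3)) = \frac 12(\frac{13}{18}+\frac{17}{18}) = \frac 56$ and $P(J_2) = P(J_3) = \frac 14$. Applying the parallel axis decomposition gives
\[V(P,\set{\tfrac 56},J_2\uu J_3) = V(P,\set{\tfrac{13}{18}},J_2) + P(J_2)(\tfrac{13}{18}-\tfrac 56)^2 + V(P,\set{\tfrac{17}{18}},J_3) + P(J_3)(\tfrac{17}{18}-\tfrac 56)^2,\]
which simplifies to $\frac{13}{1944}$. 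Summing gives $V_2 \le \frac{1}{216}+\frac{13}{1944} = \frac{11}{972}$.

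Next, let $\ga = \set{a_1, a_2}$ with $a_1 < a_2$ be an optimal set of two-means. Since optimal quantizers lie in the convex hull of the support, $0 < a_1 < a_2 < 1$. I would exclude $a_1 \geq \frac 13$ by a direct lower-bound contradiction: $V_2 \geq \int_{J_1}(x-\frac 13)^2 \, dP = \frac{1}{54} = \frac{18}{972} > \frac{11}{972}$. Similarly, I would exclude $a_2 \leq \frac 23$ by $V_2 \geq \int_{J_2\uu J_3}(x-\frac 23)^2 \, dP = \frac{5}{243} = \frac{20}{972} > \frac{11}{972}$. Hence $a_1 < \frac 13$ and $\frac 23 < a_2$ strictly.

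The last step is the Voronoi partition argument. Since $a_1 < \frac 13$ and $a_2 > \frac 23$, the midpoint $\frac 12(a_1+a_2)$ lies strictly in $(\frac 13, \frac 23)$, which is disjoint from $\te{supp}(P) = J_1 \uu J_2 \uu J_3$. Therefore the Voronoi region $M(a_1|\ga)$ captures $P$-a.e. all of $J_1$ and contains no mass from $J_2\uu J_3$, while $M(a_2|\ga)$ captures $P$-a.e. all of $J_2\uu J_3$ and contains no mass from $J_1$. Proposition~\ref{prop0}(iii) then forces $a_1 = E(X : X \in J_1) = \frac 16$ and $a_2 = E(X : X \in J_2\uu J_3) = \frac 56$, and the quantization error is the value $\frac{11}{972}$ computed above. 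No step is a serious obstacle — the main (minor) care is just the threshold arithmetic in the two contradiction inequalities, and confirming that strict inequalities $a_1 < \frac 13$ and $a_2 > \frac 23$ place the midpoint strictly in the mass-free gap.
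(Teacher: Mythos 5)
Your proposal is correct and follows essentially the same route as the paper, which omits this proof precisely because it is "similar to Lemma~\ref{lemma21}": an upper bound $V_2\le \frac{11}{972}$ from the candidate $\set{\frac 16,\frac 56}$, exclusion of $\frac 13\le a_1$ and $a_2\le \frac 23$ by the same contradiction estimates, and the midpoint-in-the-gap Voronoi argument forcing $a_1=E(X: X\in J_1)$ and $a_2=E(X : X\in J_2\uu J_3)$. The only cosmetic difference is that you compute the candidate's distortion on $J_2\uu J_3$ via the parallel-axis identity rather than by direct integration, and the arithmetic ($\frac 1{216}+\frac{13}{1944}=\frac{11}{972}$, $\frac 1{54}$, $\frac 5{243}$) checks out.
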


\begin{lemma}  \label{lemma222b}
Let $\ga:=\set{a_1, a_2, a_3}$ be an
optimal set of three-means such that $a_1<a_2<a_3$. Then, $a_1= \frac 16$, $a_2=\frac{13}{18}$, $a_3=\frac{17}{18}$, and  the corresponding quantization error is $V_3=\frac{5}{972}$.
\end{lemma}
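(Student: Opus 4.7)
My plan is to mirror the structure of Lemma~\ref{lemma22}, with the analysis considerably simplified because $\te{supp}(P)$ decomposes into only three pieces $J_1, J_2, J_3$.

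First, I would establish $V_3 \leq \frac{5}{972}$ by computing the distortion of the candidate set $\gb := \set{\frac 16,\, \frac{13}{18},\, \frac{17}{18}}$. By Lemma~\ref{lemma222a} these are exactly $E(P(\cdot\mid J_i))$ for $i = 1, 2, 3$. The Voronoi midpoints $(\frac 16+\frac{13}{18})/2 = \frac 49$ and $(\frac{13}{18}+\frac{17}{18})/2 = \frac 56$ lie in the gaps $(\frac 13, \frac 23)$ and $(\frac 79, \frac 89)$ where $P$ vanishes, so each $J_i$ sits entirely inside the Voronoi region of the corresponding candidate quantizer. By Proposition~\ref{prop111} the distortion is therefore $\frac{1}{216}+\frac{1}{3888}+\frac{1}{3888}=\frac{5}{972}$.

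Next, for any optimal $\ga = \set{a_1<a_2<a_3}\ci(0,1)$, I would rule out bad placements via lower bounds on $V_3$. The two quickest estimates are: if $a_1 \geq \frac 13$, then $V_3 \geq \int_{J_1}(\frac 32)(x-\frac 13)^2\, dx = \frac{1}{54}>\frac{5}{972}$; and if $a_3 \leq \frac 23$, the whole of $J_2 \uu J_3$ lies in the Voronoi region of $a_3$, giving $V_3 \geq \int_{J_2}(\frac 94)(x-\frac 23)^2dx + \int_{J_3}(\frac 94)(x-\frac 23)^2dx = \frac{20}{972}>\frac{5}{972}$. Similarly, if the Voronoi region of $a_1$ extends into $[\frac 23, 1]$, then $(a_1+a_2)/2>\frac 23$ forces $a_2 > \frac 43 - a_1 > 1$, a contradiction; and symmetrically the Voronoi region of $a_3$ cannot reach $J_1$ (as $a_2<0$ would follow).

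These estimates confine the configuration enough that each $J_i$ contains at least one $a_j$ and neither extreme quantizer's Voronoi region crosses a gap. The remaining work is to rule out configurations with two points in a single piece, using estimates such as: one mean on $J_2 \uu J_3$ costs at least $\int_{J_2\uu J_3}(\frac 94)(x-\frac 56)^2dx = \frac{13}{1944}>\frac{5}{972}$, and $\int_{J_3}(\frac 94)(x - \frac 79)^2 dx = \frac 7{972}>\frac{5}{972}$ rules out leaving $J_3$ uncovered by a point in $J_3$ itself. Once it is established that each $J_i$ contains exactly one $a_j$, Proposition~\ref{prop0}(iii) and Lemma~\ref{lemma222a} force $a_1=\frac 16$, $a_2=\frac{13}{18}$, $a_3=\frac{17}{18}$, whence $V_3 = \frac{5}{972}$ from the calculation above.

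The main (mild) obstacle is the bookkeeping in this case analysis: each configuration $(n_1,n_2,n_3)$ with $n_i=\te{card}(\ga\cap J_i)$ and $n_1+n_2+n_3=3$ other than $(1,1,1)$ must be excluded explicitly. Since all the relevant thresholds ($\frac 1{54}$, $\frac 7{972}$, $\frac{13}{1944}$, $\frac{20}{972}$) are comfortably above $\frac 5{972}$, none of these comparisons are genuinely difficult, and the proof should be significantly shorter than the analogous argument in Lemma~\ref{lemma22}.
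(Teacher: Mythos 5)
Your upper bound $V_3\le \frac{5}{972}$ and the basic exclusion estimates ($\frac1{54}$, $\frac{20}{972}$, $\frac{13}{1944}$, $\frac{7}{972}$) are all correct, and the overall skeleton is the right one (the paper in fact omits the proof, saying only that it is "similar to" Lemma~4.1 and Lemma~4.2, i.e.\ to the arguments of Lemma~\ref{lemma21}, Lemma~\ref{lemma22} and Lemma~\ref{lemma223}). But there is a genuine gap: your case enumeration by $(n_1,n_2,n_3)$ with $n_i=\mathrm{card}(\alpha\cap J_i)$ tacitly assumes each $a_j$ lies in some $J_i$, whereas a priori the middle quantizer may lie in a gap, and the configuration "$\frac13<a_2\le\frac12$ with the Voronoi region of $a_2$ reaching into $[\frac23,1]$" (so that $a_3>\frac56$) is not covered by any of your four estimates. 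Your bound (iv) (one mean serving all of $J_2\cup J_3$ costs $\frac{13}{1944}$) does not apply there, since $a_2$ shares the load on $[\frac23,1]$; and the bounds you do have sum to too little: in that configuration the distortion on $J_2$ is only forced to be at least $\int_{J_2}(x-\frac56)^2\,dP=\frac{13}{3888}$, and adding the unconstrained two-means error on $J_1$, namely $\frac1{864}=\frac{4.5}{3888}$, gives $\frac{17.5}{3888}<\frac{20}{3888}=\frac{5}{972}$, so no contradiction follows from "easy" comparisons alone. The analogous difficulty is exactly where Lemma~\ref{lemma22} and Lemma~\ref{lemma223} have to work hardest.

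To close it you need the paper's constrained-minimization step: since $a_2\ge\frac13$, the distortion contributed by $\{a_1,a_2\}$ on $J_1$ is $\frac{1}{216}(81a_1^3+27a_1^2-9a_1+1)\ge\frac1{486}=\frac{8}{3888}$ (the computation in Lemma~\ref{lemma223}), and then $\frac{8}{3888}+\frac{13}{3888}=\frac{21}{3888}>\frac{20}{3888}$ — note how thin the margin is, which is why the "comfortably above" claim is misleading. A separate argument of the same type (the two-variable minimization showing that when $a_2\ge\frac12$ its Voronoi region $P$-a.s.\ misses $J_1$, so the $J_1$-distortion is at least $\frac1{216}$) is still needed for $\frac12<a_2<\frac23$, and similar care is needed for configurations with $a_3\in(\frac79,\frac89)$, before you may conclude $a_i\in J_i$ for each $i$ and invoke Proposition~\ref{prop0}(iii) with Lemma~\ref{lemma222a}. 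With those cases added, your argument becomes essentially the paper's intended proof.
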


\begin{lemma}  \label{lemma223}
Let $\ga:=\set{a_1, a_2, a_3, a_4}$ be an
optimal set of four-means such that $a_1<a_2<a_3<a_4$. Then, $a_1=\frac{1}{12}$, $a_2=\frac{1}{4}$, $a_3=\frac{13}{18}$, $a_4=\frac{17}{18}$, and  the corresponding quantization error is $V_4=\frac{13}{7776}$.
\end{lemma}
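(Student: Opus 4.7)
The plan is to follow the template of Lemma~\ref{lemma211} and Lemma~\ref{lemma222b}: produce a candidate four-set to obtain an upper bound on $V_4$, show that any optimal set of four-means $\ga=\set{a_1<a_2<a_3<a_4}$ splits as a disjoint union of optimal sets of $n_k$-means for the conditional measures $P(\cdot|J_k)$ with $n_1+n_2+n_3=4$ and each $n_k\ge 1$, and then minimize over the (few) admissible compositions.

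First I would bound $V_4$ from above. Let $\gb=\set{\frac{1}{12},\frac{1}{4},\frac{13}{18},\frac{17}{18}}$. By Proposition~\ref{prop111}, $\set{\frac{1}{12},\frac{1}{4}}=\ga_2(P(\cdot|J_1))$, and by Lemma~\ref{lemma222a}, $\set{\frac{13}{18}}=\ga_1(P(\cdot|J_2))$ and $\set{\frac{17}{18}}=\ga_1(P(\cdot|J_3))$. The Voronoi boundaries generated by $\gb$ are $\frac{1}{6},\frac{35}{72},\frac{5}{6}$; the inner two lie in the support gaps $(\frac{1}{3},\frac{2}{3})$ and $(\frac{7}{9},\frac{8}{9})$, so no Voronoi region crosses a piece boundary. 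Hence, using Proposition~\ref{prop111},
\[V_4\le \frac{1}{216\cdot 4}+\frac{1}{3888}+\frac{1}{3888}=\frac{13}{7776}.\]

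The structural step proceeds in the spirit of Proposition~\ref{prop212} and Proposition~\ref{prop214}. I would first rule out $\ga\ii J_k=\es$ for each $k\in\set{1,2,3}$: if this occurred, the quantizer of $\ga$ nearest to $J_k$ would lie in a different piece (or the gap between them), and a direct computation with the densities $\frac{3}{2}$ on $J_1$ and $\frac{9}{4}$ on $J_2\uu J_3$ shows that the $J_k$-contribution already exceeds $\frac{13}{7776}$ once added to the minimum error that the three remaining quantizers can produce on the other two pieces (using Proposition~\ref{prop111}). Second, I would show that the Voronoi region of any $a_i\in J_k$ cannot, up to $P$-null sets, contain points of another $J_j$; otherwise the Voronoi midpoint $\frac{a_i+a_{i+1}}{2}$ (or $\frac{a_{i-1}+a_i}{2}$) must lie beyond the gap separating $J_k$ from $J_j$, which forces the neighboring quantizer outside $[0,1]$ and contradicts Proposition~\ref{prop0}. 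These checks are analogous to Case~1 and Case~2 in the proof of Lemma~\ref{lemma22}.

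With the structural step established, Proposition~\ref{prop0} and Proposition~\ref{prop111} yield $\ga\ii J_k=\ga_{n_k}(P(\cdot|J_k))$ for each $k$, and the constraints $n_1+n_2+n_3=4$, $n_k\ge 1$, leave only the compositions $(n_1,n_2,n_3)\in\set{(2,1,1),(1,2,1),(1,1,2)}$. By Proposition~\ref{prop111} their quantization errors are
\[\frac{26}{15552},\q \frac{77}{15552},\q \frac{77}{15552},\]
respectively, and the minimum is attained uniquely at $(2,1,1)$, giving $\ga=\set{\frac{1}{12},\frac{1}{4},\frac{13}{18},\frac{17}{18}}$ and $V_4=\frac{13}{7776}$. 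The main obstacle is the structural step: each sub-case (which piece is empty, which Voronoi region might cross a gap) requires a dedicated numerical comparison against $\frac{13}{7776}$. The individual comparisons are elementary, but there are several of them and each must be driven strictly below the threshold for the argument to close.
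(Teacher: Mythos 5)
Your upper bound and your final enumeration are fine: the candidate set gives $V_4\le \frac{13}{7776}$, and once one knows that $\ga$ splits as $\ga_{n_1}(P(\cdot|J_1))\uu\ga_{n_2}(P(\cdot|J_2))\uu\ga_{n_3}(P(\cdot|J_3))$ with $n_1+n_2+n_3=4$, $n_k\ge 1$, comparing $\frac{26}{15552}$ with $\frac{77}{15552}$ settles the lemma. But that splitting is the entire content of the lemma, and your structural step does not hold up as sketched. Note first that you cannot import Proposition~\ref{prop2122}: the paper proves that proposition for $n=4$ by citing this very lemma, so the structure must be established from scratch here. Your concrete suggestion for nonemptiness --- that the $J_k$-contribution plus ``the minimum error that the three remaining quantizers can produce on the other two pieces'' exceeds $\frac{13}{7776}$ --- fails numerically and logically. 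If $\ga\ii J_2=\es$ the forced contribution on $J_2$ is only at least $\int_{J_2}\min\big((x-\tfrac23)^2,(x-\tfrac79)^2\big)\,dP=\frac1{3888}=\frac{2}{7776}$, and if $\ga\ii J_3=\es$ it is only $\int_{J_3}(x-\tfrac89)^2dP=\frac1{972}=\frac{8}{7776}$; moreover all four quantizers (including ones sitting in the gaps, which can serve two pieces at once) may contribute to the remaining pieces, so you are not entitled to a ``three remaining quantizers'' bound, and the honest per-piece bounds (four points per piece) give only about $\frac{2.4}{7776}$ extra, which does not reach $\frac{13}{7776}$. Your second structural claim also has a false sub-case: if the Voronoi region of a point of $\ga\ii J_3$ met $J_2$, the midpoint condition forces the left neighbour below $\frac23$, not outside $[0,1]$, so that case can only be closed by already knowing $\ga\ii J_2\neq\es$. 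Finally, nonemptiness of all three pieces plus no Voronoi crossing does not yet give $n_1+n_2+n_3=4$: you must also exclude a quantizer lying in one of the gaps $(\frac13,\frac23)$, $(\frac79,\frac89)$, which needs its own estimates (the analogues of Cases~1 and 2 in Proposition~\ref{prop2122}).

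The paper closes these gaps by a direct left-to-right positional analysis rather than a ``which piece is empty'' scheme: it shows $a_1<\frac13$; then rules out $a_2\ge\frac13$ by minimizing the explicit cubic distortion of two points on $J_1$ (minimum $\frac1{486}>V_4$), so $a_1,a_2\in(0,\frac13)$; then $a_4>\frac56$, $a_3>\frac12$, the Voronoi region of $a_3$ misses $J_1$, and $a_3<\frac79$; and finally it shows the Voronoi region of $a_4$ misses $J_2$ by minimizing the $J_2$-distortion in $a_3$ and observing that the minimizer $a_3=\frac{13}{18}$ makes the midpoint exactly $\frac79$, a $P$-almost sure statement. These are exactly the ``dedicated numerical comparisons'' your plan defers, and they are finer than threshold counts of quantizers per piece; without them (or an equivalent replacement) your argument does not close.
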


\begin{proof} Consider the set of four points $\gb:=\set {\frac{1}{12},\frac{1}{4},\frac{13}{18},\frac{17}{18}}$. The distortion error due to the set $\gb$ is given by
\begin{align*}
&\int\min_{a\in \gb}(x-a)^2 dP\\
&=\int_{[0, \frac 16]}(x-\frac 1{12})^2 dP+\int_{[\frac 16, \frac 13]}(x-\frac 14)^2 dP+\int_{J_2}(x-\frac{13}{18})^2 dP+\int_{J_3}(x-\frac {17}{18})^2 dP=\frac{13}{7776},
\end{align*}
implying $V_4\leq \frac{13}{7776}=0.00167181$.

Let $\ga:=\set{a_1<a_2<a_3<a_4}$ be an optimal set of four-means. Since optimal quantizers are the expected values of their own Voronoi regions, we have $0<a_1<a_2<a_3<a_4<1$. If $\frac 13\leq a_1$, then
\[V_4\geq \int_{J_1}(x-\frac 13)^2 dP= \frac{1}{54}=0.0185185>V_4,\]
which leads to a contradiction, so we can assume that $a_1<\frac 1 3$. Suppose that $\frac 13\leq a_2$. Then, the distortion error contributed by $a_1$ and $a_2$ on the set $J_1$ is given by
\begin{align*}
&\int_{[0, \frac 12(a_1+\frac 13)]}(x-a_1)^2 dP+\int_{[\frac 12(a_1+\frac 13), \frac 13]}(x-\frac 13)^2 dP=\frac{1}{216} \left(81 a_1^3+27 a_1^2-9 a_1+1\right),
\end{align*}
which is minimum when $a_1=\frac 19$, and the minimum value is $\frac{1}{486}=0.00205761>V_4$, which is a contradiction. So, we can assume that $0<a_1<a_2<\frac 13$. If $a_4\leq \frac 56$, then
\[V_4\geq \int_{J_3}(x-\frac 56)^2 dP=\frac{13}{3888}=0.00334362>V_4,\]
which leads to a contradiction. So, we can assume that $\frac 56<a_4$. Suppose that $a_3\leq \frac 12$. Then, $\frac 12(\frac 12+\frac 56)=\frac 23$ implying
\[V_4\geq \int_{J_2}(x-\frac 56)^2 dP=\frac{13}{3888}=0.00334362>V_4,\]
which is a contradiction. So, we can assume that $\frac 12<a_3$. Now, if the Voronoi region of $a_3$ contains points from $J_1$, we must have $\frac 12(a_2+a_3)<\frac 13$ implying $a_2<\frac 23-a_3\leq \frac 23-\frac 12=\frac 16$, and so,
\[V_4\geq \int_{[\frac 16, \frac 13]}(x-\frac 16)^2 dP=\frac{1}{432}=0.00231481>V_4,\]
which yields a contradiction. Thus, we can assume that the Voronoi region of $a_3$ does not contain any point from $J_1$ implying $\frac 23<a_3$. If $\frac 79\leq a_3$, then
\[V_4\geq V(P, \ga_2(P(\cdot|J_1), J_1), J_1)+\int_{J_2}(x-\frac 79)^2 dP=\frac{17}{7776}=0.00218621>V_4,\]
which gives a contradiction. So, we can assume that $\frac 23<a_3<\frac 79$. We now show that the Voronoi region of $a_4$ does not contain any point from $J_2$. If it does, then
\begin{align*}
\int_{[\frac 23, \frac 12(a_3+\frac 56)]}(x-a_3)^2 dP+\int_{[\frac 12(a_3+\frac 56), \frac 23]}(x-\frac 56)^2 dP=\frac{9 a_3^3}{16}-\frac{33 a_3^2}{32}+\frac{39 a_3}{64}-\frac{3541}{31104},
\end{align*}
which is minimum if $a_3=\frac {13}{18}$. Notice that $\frac 12(\frac {13}{18}+\frac 56)=\frac 79$ yielding the fact that $P$-almost surely the Voronoi region of $a_4$ does not contain any point from $J_2$ implying $\frac 89<a_4$. Thus, we see that
$a_1=\frac{1}{12}$, $a_2=\frac{1}{4}$, $a_3=\frac{13}{18}$ and $a_4=\frac{17}{18}$ and the corresponding quantization error is given by $V_4=\frac{13}{7776}$, which completes the proof of the lemma.
\end{proof}

\begin{prop}  \label{prop2122}
Let $n\geq 3$ and let $\ga_n$ be an optimal set of $n$-means. Then,

 $(i)$ $\ga_n\ii J_i\neq \es$ for all $1\leq i\leq 3$;

 $(ii)$ $\ga_n$ does not contain any point from the open intervals $(\frac 13, \frac 23)$ and $(\frac 79, \frac 89)$;

 $(iii)$ the Voronoi region of any point in $\ga_n\ii J_i$ does not contain any point from $J_j$ for $1\leq i\neq j\leq 3$.
\end{prop}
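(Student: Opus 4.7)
The plan is to mirror the structure of the proof of Proposition~\ref{prop212}, adapted to the three-piece finite setting. First I would dispose of the small cases $n = 3$ and $n = 4$ by direct inspection, since Lemma~\ref{lemma222b} and Lemma~\ref{lemma223} give the optimal sets explicitly as $\set{1/6, 13/18, 17/18}$ and $\set{1/12, 1/4, 13/18, 17/18}$, both of which plainly satisfy (i)--(iii). For $n \ge 5$ I would use the monotonicity $V_n \le V_4 = 13/7776$, supplemented by $V_n \le V_5 \le 1/972$ (the latter obtained by placing three equispaced points in $J_1$ and one each in $J_2, J_3$).

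Write $\ga_n = \set{a_1 < \cdots < a_n}$. For part (i): $\ga_n \ii J_1 \neq \es$ because otherwise $a_1 \ge 1/3$ would give $V_n \ge \int_{J_1}(x - 1/3)^2\,dP = 1/54$; $\ga_n \ii J_3 \neq \es$ (in tandem with (ii)) because otherwise $a_n \le 7/9$ would force $V_n \ge \int_{J_3}(x-7/9)^2\,dP = 7/972$; and $\ga_n \ii J_2 \neq \es$ (again in tandem with (ii)) because otherwise $\ga_n \ci (J_1 \uu J_3)$ would give, for $x \in J_2$, $\min_{a \in \ga_n}|x - a| \ge 8/9 - x$, so $V_n \ge \int_{J_2}(8/9 - x)^2\,dP = 7/972$. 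Each bound comfortably exceeds $13/7776$.

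For part (ii) I would handle the first gap as follows. Let $j = \max\set{i : a_i \le 1/3}$ and suppose $a_{j+1} \in (1/3, 2/3)$. Since the gap is $P$-null and $M(a_{j+1}|\ga_n)$ has positive $P$-mass (Proposition~\ref{prop0}(i)), either the left boundary $(a_j + a_{j+1})/2 < 1/3$ or the right boundary $(a_{j+1} + a_{j+2})/2 > 2/3$. In the subcase $a_{j+1} \in (1/3, 1/2]$, if the right boundary were at most $2/3$ then the Voronoi region of $a_{j+1}$ would lie in $J_1$, so its conditional mean would be $\le 1/3 < a_{j+1}$, contradicting Proposition~\ref{prop0}(iii); hence the right boundary exceeds $2/3$, forcing $a_{j+2} > 4/3 - 1/2 = 5/6 > 7/9$, so $\ga_n \ii J_2 = \es$ and for $x \in J_2$ one has $\min_a|x-a| \ge |x - 5/6|$, giving $V_n \ge \int_{J_2}(x-5/6)^2\,dP = 13/3888 > V_4$. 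The subcase $a_{j+1} \in [1/2, 2/3)$ is symmetric (bound $a_j$ above and the error on $J_1$ below). The argument for the second gap $(7/9, 8/9)$ is structurally the same.

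Part (iii) then follows from (i) and (ii): for any $a \in \ga_n \ii J_i$, the distance from $a$ to $J_j$ is at least $\te{dist}(J_i, J_j)$, while by (i) there exists $a' \in \ga_n \ii J_j$ with $|x - a'| \le \te{diam}(J_j) \le \te{dist}(J_i, J_j) \le |x - a|$ for every $x \in J_j$, forcing $M(a|\ga_n) \ii J_j$ to be $P$-null. The main technical obstacle is the analog of the ``$a_{j+2} > 5/6$'' step for the second gap: $(7/9, 8/9)$ is narrower and $J_3$ abuts $x = 1$, so one cannot simply force the next point past $1$. I expect one must split further on whether the putative gap point $a_k$ is the maximum of $\ga_n$ (in which case the Voronoi equation $a_n = (R + 1)/2$, with $R = (a_{n-1} + a_n)/2$, directly pins $a_n$ outside the gap) versus the interior case, where combining Proposition~\ref{prop0}(iii) with the sharper bound $V_n \le V_5 \le 1/972$ should close the argument.
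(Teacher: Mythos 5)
Your handling of $n=3,4$, of part (i), and of the first gap $(\frac13,\frac23)$ in (ii) follows the same route as the paper (same comparison sets and the same constants $\frac1{54}$, $\frac1{432}$, $\frac{13}{3888}$), and your derivation of (iii) from (i) via $\te{diam}(J_j)\le \te{dist}(J_i,J_j)$ is actually cleaner than the paper's case-by-case midpoint estimates (up to a $P$-null tie such as $x=\frac79$ when $\frac23,\frac89\in\ga_n$). The genuine gap is precisely the step you flag and then leave to an expectation: excluding quantizers from $(\frac79,\frac89)$. The analogue of your first-gap argument does not close there against the global bound $V_n\le V_5\le\frac1{972}$: if $a_k\in[\frac56,\frac89)$ is the largest quantizer below $\frac89$, the centroid property only forces $\frac12(a_{k-1}+a_k)<\frac79$, hence $a_{k-1}<\frac{13}{18}$, and the resulting error estimate is $\int_{[\frac{13}{18},\frac79]}(x-\frac{13}{18})^2\,dP=\frac1{7776}$, which is eight times smaller than $\frac1{972}$ — no contradiction. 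Your proposed fix for the ``maximum point'' case is also incorrect: the Voronoi centroid equation does not pin $a_n$ outside the gap, since for example $\frac12(a_{n-1}+a_n)=\frac23$ gives $E(X:X\in[\frac23,1])=\frac56\in(\frac79,\frac89)$, a perfectly consistent centroid; what rules this case out is the part-(i) estimate $a_n>\frac89$ (if $a_n\le\frac89$ then $V_n\ge\int_{J_2}\min_a(x-a)^2dP+\int_{J_3}(x-\frac89)^2dP>\frac1{972}\ge V_n$), which the paper proves without invoking (ii); since your own part (i) for $J_2$ and $J_3$ is argued ``in tandem with (ii)'', the unresolved second gap leaves those claims incomplete as well.

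What the paper does, and what your sketch is missing, is to localize the error bound to $[\frac23,1]$ and condition on $m:=\te{card}(\ga_n\ii[\frac23,1])$: comparing with $\set{\frac{25}{36},\frac34,\frac{17}{18}}$ gives $V(P,\ga_n\ii[\frac23,1])\le\frac5{15552}$ when $m=3$, and comparing with $\set{\frac{25}{36},\frac34,\frac{11}{12},\frac{35}{36}}$ gives $\le\frac1{7776}$ when $m\ge4$. Only against these sharper conditional bounds do the two-sided estimates bite: for $m\ge4$ the lower bound $\frac1{7776}$ plus a strictly positive $J_2$ term exceeds the upper bound, and for $m=3$ one needs the auxiliary minimization showing the $J_2$ distortion is minimized at $a_{n-2}=\frac{13}{18}$, which pushes the boundary $\frac12(a_{n-2}+a_{n-1})$ up to $\frac79$ and contradicts the centroid constraint; the subcase $\frac79<a_k\le\frac56$ is then handled by reflecting about $\frac56$. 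Without this conditioning and the localized comparison sets, the second-gap claim in (ii) — and hence the $J_2$/$J_3$ part of (iii) — is not established by your argument.
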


\begin{proof} From Lemma~\ref{lemma222b} and Lemma~\ref{lemma223}, it follows that the proposition is true for $n=3, 4$. We now prove that the proposition is true for $n\geq 5$. Consider the set of five points $\gb:=\set{\frac{1}{18},\frac{1}{6},\frac{5}{18},\frac{13}{18},\frac{17}{18}}$. The distortion error due to the set $\gb$ is given by
\begin{align*}
&\int\min_{a\in \gb}(x-a)^2 dP=\int_{J_1}\min_{a\in \set{\frac{1}{18},\frac{1}{6},\frac{5}{18}}}(x-a)^2 dP+\int_{J_2}(x-\frac{13}{18})^2 dP+\int_{J_3}(x-\frac {17}{18})^2 dP=\frac{1}{972},
\end{align*}
implying $V_5\leq \frac{1}{972}=0.00102881$. Since $V_n$ is the quantization error for $n$-means for all $n\geq 5$, we have $V_n\leq V_5\leq 0.00102881$. Let $\ga:=\set{a_1<a_2<a_3<a_4<a_5}$ be an optimal set of five-means. Since optimal quantizers are the expected values of their own Voronoi regions, we have $0<a_1<a_2<a_3<a_4<a_5<1$. If $\frac 13\leq a_1$, then
\[V_n\geq \int_{J_1}(x-\frac 13)^2 dP= \frac{1}{54}=0.0185185>V_n,\]
which leads to a contradiction, so we can assume that $a_1<\frac 1 3$, i.e., $\ga_n\ii J_1\neq \es$.  If $a_n\leq \frac 89$, then
\[V_n\geq \int_{J_2}\min_{a\in \ga}(x-a)^2 dP+\int_{J_3}(x-\frac 89)^2 dP>\int_{J_3}(x-\frac 89)^2 dP=\frac{1}{972}\geq V_n,\]
which is a contradiction. So, $\frac 89<a_n$ yielding $\ga_n\ii J_3\neq \es$. Let $j=\max\set {i : a_i<\frac 23}$. Then, $a_j<\frac 23$. We now show that $\ga_n$ does not contain any point from the open interval $(\frac 13, \frac 23)$.  For the sake of contradiction assume that $\ga_n$ contain a point from the open interval $(\frac 13, \frac 23)$. The following two cases can arise:

Case~1. $\frac 12\leq a_j<\frac 23$.

Then, $\frac 12(a_{j-1}+a_j)<\frac 13$ implying $a_{j-1}<\frac 23-a_j\leq \frac 23-\frac 12=\frac 16$, and so,
\[V_n\geq \int_{[\frac 16, \frac 13]}(x-\frac 16)^2 dP=\frac{1}{432}=0.00231481>V_n,\]
which is a contradiction.

Case~2. $\frac 13<a_j\leq \frac 12$.

Then, $\frac 12(a_{j}+a_{j+1})>\frac 23$ implying $a_{j+1}>\frac 43-a_j\geq \frac 4 3-\frac 12=\frac 56>\frac 79$, and so,
\[V_n\geq \int_{J_2}(x-\frac 56)^2 dP=\frac{13}{3888}=0.00334362>V_n,\]
which leads to a contradiction.

By Case~1 and Case~2, we can assume that $\ga_n$ does not contain any point from the open interval $(\frac 13, \frac 23)$. If $\frac 79\leq a_{j+1}$, then
\[V_n\geq \int_{J_1}\min_{a\in \ga_n}(x-a)^2 dP+ \int_{J_2}(x-\frac 79)^2 dP>\int_{J_2}(x-\frac 79)^2 dP=\frac{1}{972}\geq V_n,\]
which is a contradiction. So, we can assume that $a_{j+1}<\frac 79$ implying $\ga_n\ii J_2\neq \es$. If the Voronoi region of any point in $\ga_n\ii J_2$ contains points from $J_1$, then we must have $\frac 12(a_{j}+a_{j+1})<\frac 13$ implying $a_{j}<\frac 23-a_{j+1}\leq \frac 23-\frac 23=0$, which is a contradiction. If the Voronoi region of any point in $\ga_n\ii J_1$ contains points from $J_2$, then we must have $\frac 12(a_{j}+a_{j+1})>\frac 23$ implying $a_{j+1}>\frac 43-a_{j}\geq \frac 43-\frac 13=1$, which gives another contradiction. Hence, the Voronoi region of any point in $\ga_n\ii J_2$ does not contain any point from $J_1$, and the Voronoi region of any point in $\ga_n\ii J_1$ does not contain any point from $J_2$.

We now show that $\ga_n$ does not contain any point from the open interval $(\frac 79, \frac 89)$. Since $\ga_n$ does not contain any point from $(\frac 13, \frac 23)$ and the Voronoi region of any point in $\ga_n\ii J_2$ does not contain any point from $J_1$, and the Voronoi region of any point in $\ga_n\ii J_1$ does not contain any point from $J_2$, we have
\[\int_{[\frac 23, 1]} \min_{a\in \ga_n} (x-a)^2 dP=\int_{[\frac 23, 1]} \min_{a\in \ga_n\ii [\frac 23, 1]} (x-a)^2 dP.\]
Let $V(P, \ga_n\ii [\frac 23,1])$ be the quantization error contributed by the set $\ga_n\ii [\frac 23, 1]$ in the region $[\frac 23,1]$.
Since $\ga_n\ii J_2\neq \es$ and $\ga_n\ii J_3\neq \es$, if $\te{card}(\ga_n\ii [\frac 23, 1])=2$, then $\ga_n$ does not contain any point from  $(\frac 79, \frac 89)$. Assume that $\te{card}(\ga_n\ii [\frac 23, 1])=3$. Consider the set of three points $\gg=\set{\frac{25}{36},\frac{3}{4},\frac{17}{18}}$. Since,
\[\int_{[\frac 23, 1]}\min_{a\in \gg}(x-a)^2 dP=\int_{[\frac 23, \frac{13}{18}]}(x-\frac{25}{36})^2 dP+\int_{[\frac{13}{18}, \frac 79]}(x-\frac{3}{4})^2 dP+\int_{J_3}(x-\frac {17}{18})^2 dP=\frac{5}{15552},\]
we have $V(P, \ga_n\ii [\frac 23,1])\leq \frac{5}{15552}=0.000321502.$
If $\ga_n$ contains a point from $(\frac 79, \frac 89)$, we must have $\frac 79<a_{n-1}<\frac 89$. Suppose that $\frac 56\leq a_{n-1}<\frac 89$.
Then, $\frac 12(a_{n-2}+a_{n-1})<\frac 79$ implying $a_{n-2}<\frac {14}{9}-a_{n-1}\leq \frac{14}{9}-\frac 56=\frac{13}{18}$. Now, notice that
\begin{align*} \int_{J_2}\min_{a\in\ga_n \ii [\frac 23, 1]}(x-a)^2 dP
&=\int_{[\frac 23, \frac{1}{2} (a_{n-2}+\frac{5}{6})]}(x-a_{n-2})^2dP +\int_{[\frac{1}{2} (a_{n-2}+\frac{5}{6}), \frac 79]}(x-\frac 56)^2dP\\
&=\frac{9 a_{n-2}^3}{16}-\frac{33 a_{n-2}^2}{32}+\frac{39 a_{n-2}}{64}-\frac{3541}{31104},
\end{align*}
which is minimum if $a_{n-2}=\frac{13}{18}$, and then $\frac 12(a_{n-2}+a_{n-1})\geq \frac 12(\frac{13}{18}+\frac 56)=\frac 79$, which contradicts the fact that $\frac 12(a_{n-2}+a_{n-1})<\frac 79$. So, we can assume that $\frac 56\leq a_{n-1}<\frac 89$ is not true. Reflecting the situation with respect to the point $\frac 56$, we can show that $\frac 79< a_{n-1}\leq \frac 56$ is also not true. Therefore, if  $\te{card}(\ga_n\ii [\frac 23, 1])=3$, the set $\ga_n$ does not contain any point from $(\frac 79, \frac 89)$. Next, assume that  $\te{card}(\ga_n\ii [\frac 23, 1])= m$ for some positive integer $m\geq 4$. Let $k=\max\set{i : a_i<\frac 89}$. Then, $a_k<\frac 89$. We need to show that $a_k\leq \frac 79$. Consider the set of four points $\gd:=\set{\frac{25}{36},\frac{3}{4},\frac{11}{12},\frac{35}{36}}$. Since $V(P, \ga_n\ii [\frac 23, 1])$ is the quantization error for $m$-means for $m\geq 4$, we have
\[V(P, \ga_n\ii [\frac 23, 1])\leq \int_{[\frac 23, 1]}\min_{a\in\gd}(x-a)^2 dP=\frac{1}{7776}=0.000128601.\]
For the sake of contradiction, assume that $\frac 79<a_k<\frac 89$. The following two cases can arise:

Case~A. $\frac 56\leq a_k< \frac 89$.

Then, $\frac 12(a_{k-1}+a_k)<\frac 79$ implying $a_{k-1}<\frac {14}9-a_k=\frac {14}9-\frac 56=\frac{13}{18}$, and so,
\[V(P, \ga_n\ii [\frac 23, 1])\geq \int_{[\frac {13}{18}, \frac 79]}(x-\frac {13}{18})^2 dP+\int_{J_2}\min_{a\in \ga_n}(x-a)^2 dP>\int_{[\frac {13}{18}, \frac 79]}(x-\frac {13}{18})^2 dP=\frac{1}{7776},\]
implying $V(P, \ga_n\ii [\frac 23, 1])>\frac{1}{7776}=V(P, \ga_n\ii [\frac 23, 1])$,
which is a contradiction.

Case~B. $\frac 79 < a_k\leq \frac 56$.

Reflecting the situation in Case~A with respect to the point $\frac 56$, in this case, we can also show that a contradiction arises.

Hence, by Case~A and Case~B, we can assume that $\ga_n$ does not contain any point from the open interval $(\frac 79, \frac 89)$, i.e., $a_k\leq \frac 79$. If the Voronoi region of any point in $\ga_n\ii J_3$ contains points from $J_2$, then we must have $\frac 12(a_{k}+a_{k+1})<\frac 79 $ implying $a_{k}<\frac {14}9 -a_{k+1}\leq \frac {14}9-\frac 89=\frac 23$, which contradicts the fact that $\ga_n\ii J_2\neq\es$. If the Voronoi region of any point in $\ga_n\ii J_2$ contains points from $J_3$, then we must have $\frac 12(a_{k}+a_{k+1})>\frac 89$ implying $a_{k+1}>\frac {16}{9}-a_{k}\geq \frac {16}{9}-\frac 79=1$, which gives another contradiction. Hence, the Voronoi region of any point in $\ga_n\ii J_3$ does not contain any point from $J_2$, and the Voronoi region of any point in $\ga_n\ii J_2$ does not contain any point from $J_3$. Thus, the proof of the proposition is complete.
\end{proof}

Due to Proposition~\ref{prop2122}, we are now ready to state and prove the following proposition, which helps us to determine the optimal sets of $n$-means and the $n$th quantization errors for all $n\geq 3$ as stated in the subsequent notes.
\begin{prop}\label{prop2155}
Let $\ga_n$ be an optimal set of $n$-means for $n\geq 3$. Write $\ga_{n,j}:=\ga_n\ii J_j$ and $n_j:=\te{card}(\ga_{n,j})$ for $1\leq j\leq 3$. Then, $\ga_{n, j}=\ga_{n_j}(P(\cdot|J_j))$ and $n=n_1+n_2+n_3$, with
\begin{equation} \label{eq89}
V_n=\sum_{j=1}^3 V(P, \ga_{n_j}(P(\cdot|J_j)), J_j)=\frac 1{216} \frac 1{n_1^2}+\frac 1{3888}\Big(\frac 1{n_2^2}+\frac 1{n_3^2}\Big).
\end{equation}
\end{prop}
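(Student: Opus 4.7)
The plan is to leverage the structural result Proposition~\ref{prop2122} to decompose the optimization into three independent problems on the pieces $J_1, J_2, J_3$, each of which is already solved by Proposition~\ref{prop111}.

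First, Proposition~\ref{prop2122}$(ii)$ together with the fact that an optimal quantizer equals the conditional expectation on its Voronoi region (Proposition~\ref{prop0}) forces $\ga_n \ci J_1 \uu J_2 \uu J_3$, so immediately $n = n_1 + n_2 + n_3$. Next, Proposition~\ref{prop2122}$(iii)$ asserts that for each $j$ the Voronoi region of any $a \in \ga_{n,j}$ is $P$-almost surely contained in $J_j$, and conversely for $P$-a.e. $x \in J_j$ the nearest point of $\ga_n$ lies in $\ga_{n,j}$. This cleanly splits the global distortion as
\[V_n = \sum_{j=1}^3 \int_{J_j} \min_{a \in \ga_{n,j}}(x-a)^2 \, dP,\]
with no cross-contribution between the pieces.

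The heart of the argument is then a swap. Fix $j$ and suppose for contradiction that some $\gb \ci J_j$ with $\te{card}(\gb) = n_j$ yields strictly smaller $P$-distortion on $J_j$ than $\ga_{n,j}$. Replace $\ga_{n,j}$ in $\ga_n$ by $\gb$, forming $\ga' := (\ga_n \setminus \ga_{n,j}) \uu \gb$. For each $i \neq j$, Proposition~\ref{prop2122}$(iii)$ applied to the original $\ga_n$ shows that the points of $\ga_{n,j}$ contribute nothing to the $J_i$-integrand, hence the integrand using $\ga'$ is pointwise bounded above by the one using $\ga_n$ on $J_i$; on $J_j$ the integrand using $\ga'$ is bounded above by the one using $\gb$ alone, whose integral is strictly less than the $J_j$-piece of $V_n$ by hypothesis. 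Summing yields a total distortion strictly less than $V_n$, contradicting optimality. By the uniqueness asserted in Proposition~\ref{prop111}, $\ga_{n,j} = \ga_{n_j}(P(\cdot|J_j))$.

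Substituting the explicit values $V(P, \ga_{n_1}(P(\cdot|J_1)), J_1) = 1/(216\, n_1^2)$ and $V(P, \ga_{n_j}(P(\cdot|J_j)), J_j) = 1/(3888\, n_j^2)$ for $j = 2, 3$ from Proposition~\ref{prop111} yields the closed form in~\eqref{eq89}. The only delicate point is the swap step, specifically the identity $\int_{J_i}\min_{a \in \ga_n}(x-a)^2\, dP = \int_{J_i}\min_{a \in \ga_n \setminus \ga_{n,j}}(x-a)^2\, dP$ for $i \neq j$; this is exactly the consequence of Proposition~\ref{prop2122}$(iii)$ that makes the decomposition of $V_n$ additive rather than merely sub-additive, and it is what makes the three-piece problem genuinely separable.
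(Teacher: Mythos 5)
Your proposal is correct and follows essentially the same route as the paper: decompose via Proposition~\ref{prop2122} to get $\ga_n=\ga_{n,1}\uu\ga_{n,2}\uu\ga_{n,3}$ and the additive splitting of $V_n$, argue by a swap/contradiction that each $\ga_{n,j}$ must be optimal for $P(\cdot|J_j)$, and then substitute the explicit errors from Proposition~\ref{prop111}. Your write-up is in fact somewhat more careful than the paper's (which states the swap argument in one sentence), but the underlying proof is the same.
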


\begin{proof}  If $\ga_{n, j}$ is not an optimal set of $n_j$-means with respect to the probability distribution $P(\cdot|J_j)$, we must have another set $\ga_{n, j}'$ with cardinality $n_j$ which will give smaller distortion error with respect to $P(\cdot|J_j)$ than the distortion error due to the set $\ga_{n, j}$. This will contradict the fact that $\ga_n$ is an optimal set of $n$-means with respect to the probability distribution $P$. Since $\ga_{n,j}$ are disjoint for $1\leq j\leq 3$ and $\ga_n$ does not contain any point from the open intervals $(\frac 13, \frac 23)$ and $(\frac 79, \frac 89)$, we have
$\ga_n=\ga_{n, 1}\uu \ga_{n, 2}\uu \ga_{n, 3}$ and $n=n_1+n_2+n_3$, and so,
\begin{align*} V_n&=\int \min_{a \in \ga_{n}} (x-a)^2 dP=\sum_{j=1}^{3} \int_{J_j}\min_{a \in \ga_{n,j}} (x-a)^2 dP
=\sum_{j=1}^{3} V(P, \ga_{n_j}(P(\cdot|J_j)), J_j)\\
&=\frac 1{216} \frac 1{n_1^2}+\frac 1{3888}\Big(\frac 1{n_2^2}+\frac 1{n_3^2}\Big).
\end{align*}
Thus, the proof of the proposition is complete.
\end{proof}

\begin{note} Since $V_n$ represents the $n$th quantization error for any $n\in \D N$, if $n_2+n_3=m$ for some positive integer $m$, the expression $\frac 1{3888}\Big(\frac 1{n_2^2}+\frac 1{n_3^2}\Big)$ is minimum if $n_2\approx \frac m 2$ and $n_3\approx \frac m 2$. Thus, we see that if $m=2k$ for some positive integer $k$, then $n_2=n_3=k$, and if $m=2k+1$ for some positive integer $k$, then either $(n_2=k+1$ and $n_3=k)$ or $(n_2=k$ and $n_3=k+1)$. Moreover, writing $n_2=n_3$, or $n_2=n_3+1$ in \eqref{eq89}, it can be seen that $n_1\geq \frac n 2$ for any positive integer $n\geq 4$. Thus, we see that unlike the uniform distribution with infinitely many pieces, described in the previous section, the optimal sets of $n$-means for the uniform distribution with finitely many pieces for all $n\in \D N$ are not unique: if $n_2+n_3$ is an odd number then there are two different optimal sets of $n$-means, and if $n_2+n_3$ is an even number then the optimal set of $n$-means is unique.
\end{note}

In the following note we describe how to determine the optimal sets of $n$-means and the $n$th quantization errors for all $n\geq 3$.
\begin{note} \label{note12}
To determine an optimal set of $n$-means for any positive integer $n\geq 3$, we need to know $n_1$, $n_2$ and $n_3$ as described in Proposition~\ref{prop2155}. Notice that for any $n\in \D N$, $n\geq 3$, we can easily determine $n_1$, $n_2$ and $n_3$ by minimizing the following function:
\[f(n_1, n_2, n_3):=\frac 1{216} \frac 1{n_1^2}+\frac 1{3888}\Big(\frac 1{n_2^2}+\frac 1{n_3^2}\Big),\]
subject to the constraint $n_1+n_2+n_3=n$. Once $n_1, n_2$ and $n_3$ are known, then by Proposition~\ref{prop111}, using the following formula we can determine the corresponding optimal set of $n$-means:
\[\ga_n=\ga_{n_1}(P(\cdot|J_1))\uu \ga_{n_2}(P(\cdot|J_2))\uu \ga_{n_3}(P(\cdot|J_3)).\]
For example: If $n=7$, then $\set{n_1=4,\,  n_2=2, \, n_3=1}$, or $\set{n_1=4,\,  n_2=1, \, n_3=2}$ and the corresponding quantization error is $\frac{19}{31104}$. If $n=100$, then $\set{n_1=56, \, n_2=n_3=22}$ and the corresponding quantization error is $\frac{1873}{737662464}$, etc.

\end{note}

\noindent {\bf Acknowledgments} We thank B. Pittel for useful facts about random quantizations and suggestions for some possible asymptotic results.  We also would like to thank the referee whose questions and suggestions have been very important in improving this article, both in terms of its content and its citations.

\end{document}